\def\NAT@def@citea{\def\@citea{\NAT@separator}}
\theoremstyle{plain}
\newtheorem{theorem}{Theorem}[section]
\newtheorem{lemma}[theorem]{Lemma}
\newtheorem{corollary}[theorem]{Corollary}
\newtheorem{proposition}[theorem]{Proposition}
\theoremstyle{definition}
\newtheorem{example}[theorem]{Example}
\theoremstyle{remark}
\newtheorem{remark}{Remark}
\renewcommand{\c}{\text{Cos}}
\renewcommand{\S}{\text{Sin}}
\begin{document}
\title{A Lagrangian  method for indefinite $q$-integrals }
\author{
\name{Gamela E. Heragy\textsuperscript{a}\thanks{CONTACT Gamela E. Heragy. Email: moonegam123@gmail.com}, Zeinab S.I. Mansour, \textsuperscript{b}\thanks{CONTACT Zeinab S.I. Mansour. Email: zsmansour@cu.edu.eg} and Karima M. Oraby\textsuperscript{a}\thanks{CONTACT Karima M. Oraby. Email: koraby83@yahoo.com}}
\affil{\textsuperscript{a}Mathematics Department, Faculty of Science, Suez University, Suez, Egypt.
 \\ \textsuperscript{b}Mathematics Department, Faculty of Science, Cairo University, Giza, Egypt.
}
}
\maketitle
\begin{abstract}
A Lagrangian  method  is introduced recently for deriving indefinite integrals of special functions that satisfy homogeneous (nonhomogeneous) second-order linear differential equations.  This paper extends this method to include indefinite Jackson $q$-integrals of special functions satisfying homogeneous (nonhomogeneous) second-order linear $q$-difference equations. Many $q$-integrals, both previously known and completely new, are derived using the method.  We introduce samples of indefinite and definite $q$-integrals for Jackson's $q$-Bessel functions, $q$-hypergeometric functions, and some orthogonal polynomials.
\end{abstract}

\begin{keywords}
$q$-integrals, $q$-special functions, Jackson Bessel
functions, $q$-hypergeometric functions.
\end{keywords}
 \quad \textbf{Mathematics Subject Classification (2020)} 05A30, 33D05, 33D15, 33C10
\section{Introduction and Preliminaries }
Conway in \cite{conway}  introduced a simple method of deriving indefinite integrals. The method applies to any special function satisfying an ordinary differential equation. The main result derived in \cite{conway} is the indefinite integral
\begin{align*}
 & \int f(x) \left( h^{''}(x) +p(x) h^{'}(x) + q(x) h(x)\right)y(x) = f(x) \left(h^{'}(x) y(x) -h(x) y^{'}(x)\right),
\end{align*}
where $f(x)$ satisfies the first-order differential equation
\begin{align*}
  f(x) = \exp (\int p(x) dx).
\end{align*}
Here $p(x)$ and
$q(x)$ are arbitrary complex-valued differentiable functions of $x$  in $\mathbb{R}$, with $h(x)$ being at least twice differentiable. In a series of papers, see \cite{conway,conwa,conw,con,cpbg,co,c1,c2},   Conway developed this method to obtain more indefinite integrals.  In this paper, we extend Conway's results to include special functions satisfying   second-order $q$-difference equations.

This paper is organized as follows. In the rest of this section, we introduce the notion and notations from the $q$-series needed in the sequel.  In Section 2,  we derive $q$-analogs of the Euler-Lagrange method to homogenous second-order $q$-difference equations.   Section 3 contains applications of the derived method to Jackson's $q$-Bessel functions and some other special functions. Finally, in Section 4, we extend the Euler-Lagrange method to nonhomogeneous second-order $q$-difference equations.\\
Throughout this paper, $q$ is a positive number less than 1, $\mathbb{N}$ is the set of positive integers,  and $\mathbb{N}_0$ is the set of non-negative integers.
We follow Gasper and Rahman \cite{rahman} for the definition of the $q$-shifted factorial,  $q$-gamma, $q$-beta function,  and  $q$-hypergeometric series.\\
A $q$-natural number $[n]_q$ is defined by $ [n]_q=\frac{1-q^n}{1-q},\,\,n\in\mathbb N_0$.
The $q$-derivative $D_qf(x)$ of a function $f$ is defined by \cite{j, hein}
\[(D_qf)(x)=\frac{f(x)-f(qx)}{(1-q)x},\,\,if \, x\neq0,\]
and $(D_qf)(0)= f'(0)$ provided $f'(0)$ exists.
Jackson's $q$-integral  of a function $f$   is defined by \cite{jackson}
\begin{align}\label{nun}
  \int_0^a f(t)d_qt:=(1-q)a\sum_{n=0}^\infty q^n f(aq^n) ,\,\,a\in\mathbb R,
\end{align}
provided that the corresponding series in ({\ref{nun}})  converges.
Jackson also introduced   three $q$-analogs of Bessel functions, \cite{jackson,rahman}, they  are defined by
\begin{align*}
  J_{\nu}^{(1)}(z;q)= \frac{(q^{v+1};q)_\infty}{(q;q)_\infty} \sum_{n=0}^{\infty} \dfrac{(-1)^n }{(q,q^{v+1};q)_n} (z/2)^{2n+\nu},\quad |z|<2,
\end{align*}
\begin{align*}
  J_{\nu}^{(2)}(z;q)= \frac{(q^{v+1};q)_\infty}{(q;q)_\infty} \sum_{n=0}^{\infty} \dfrac{(-1)^n q^{n(n+\nu)} }{(q,q^{v+1};q)_n} (z/2)^{2n+\nu},\quad z\in \mathbb C,
\end{align*}
\begin{align*}
  J_{\nu}^{(3)}(z;q)= \frac{(q^{v+1};q)_\infty}{(q;q)_\infty} \sum_{n=0}^{\infty} \dfrac{(-1)^n q^{\frac{n(n+1)}{2}} }{(q,q^{v+1};q)_n} (z)^{2n+\nu},\quad z\in \mathbb C.
\end{align*}
Hahn \cite{gfd} found that
\begin{align*}
  J_\nu^{(1)} (z;q) = J_{\nu}^{(2)}(z;q)/ (-z^2/4;q^2)_\infty, \mid z\mid  < 2, \nu > -1.
\end{align*}
We shall use the simpler notation
\begin{align*}
   J_{\nu}^{(2)}( \lambda x |q^2)&=J_{\nu}^{(2)}(2\lambda x(1-q);q^2).
\end{align*}
There are three known $q$-analogs of the trigonometric  functions, \{$\sin_q z$,$\cos_q z$\}, \{$\S_q z$,$\c_q z$\} and \{$\sin(z;q)$,$\cos(z;q)$\}. Each set of $q$-analogs is related to one of the three $q$-analogs of Bessel functions.\\
The functions $\sin_q z$ and $\cos_q z$ are defined for $|z| < \frac{1}{1-q}$ by
\begin{align*}
\sin_q z &=\frac{(q^2;q^2)_\infty}{(q;q^2)_\infty} (z)^{1/2} J^{(1)}_{1/2} (2 z;q^2) = \sum_{n=0}^{\infty} (-1)^n  \frac{z^{2n+1}}{[2n+1]_q!},
\end{align*}
\begin{align*}
\cos_q z &=\frac{(q^2;q^2)_\infty}{(q;q^2)_\infty} (z)^{1/2} J^{(1)}_{-1/2} (2 z;q^2)  = \sum_{n=0}^{\infty} (-1)^n  \frac{z^{2n}}{[2n]_q!}.
\end{align*}
The functions $\S_q z$ and $\c_q z$ are defined for $ z \in \mathbb C$ by
\begin{align*}
\S_q z&=\frac{(q^2;q^2)_\infty}{(q;q^2)_\infty} (z)^{1/2} J^{(2)}_{1/2} (2 z;q^2)  = \sum_{n=0}^{\infty} (-1)^n  \frac{q^{2n^2+n} z^{2n+1}}{[2n+1]_q!},
\end{align*}
\begin{align*}
\c_q z&=\frac{(q^2;q^2)_\infty}{(q;q^2)_\infty} (z)^{1/2} J^{(2)}_{-1/2} (2 z;q^2)  = \sum_{n=0}^{\infty} (-1)^n  \frac{q^{2n^2-n} z^{2n}}{[2n]_q!}.
\end{align*}
Finally, the functions $\sin(z;q)$ and $\cos(z;q)$ are defined for $ z \in \mathbb C$ by
\begin{align*}
\sin(z;q)&=\Gamma_q (1/2) (z(1-q))^{1/2} J^{(3)}_{1/2} (z(1-q);q^2) = \sum_{n=0}^{\infty} (-1)^n \frac{q^{n^2+n} z^{2n+1}}{\Gamma_q (2n+2)},
\end{align*}
\begin{align*}
\cos(z;q)&=\Gamma_q (1/2) (z q ^{-1/2}(1-q))^{1/2} J^{(3)}_{-1/2} (z(1-q)/\sqrt{q};q^2)  = \sum_{n=0}^{\infty} (-1)^n  \frac{q^{n^2} z^{2n}}{\Gamma_q (2n+1)}.
\end{align*}
The $q$-trigonometric  functions satisfy the $q$-difference equations
\begin{align*}
  D_q \sin_q z =  \cos_q (z), \quad \quad D_q  \cos_q z = - \sin_q (z).
\end{align*}
\begin{align*}
  D_q \S_q z =  \c_q (qz), \quad \quad D_q \c_q z = - \S_q (qz).
\end{align*}
\begin{align*}
  D_q \sin( z;q) =  \cos(q^{\frac{1}{2}}  z;q), \quad \quad D_q \cos( z;q) = -q^{\frac{1}{2}} \sin(q^{\frac{1}{2}} z;q).
\end{align*}
The classical Struve Function \cite[P. 328]{Watson} is defined by
\[H_{\nu}(z)=\frac{2(z/2)^{\nu}}{\Gamma(1/2)\Gamma(\nu+\frac12)}\int_{0}^{1} (1-t^2)^{\nu-1/2}\sin zt\,dt,\,\,\Re\nu>-\frac12.\]
It has the series representation
\[H_{\nu}(z)=\sum_{n=0}^\infty\dfrac{(-1)^n(x/2)^{2n+\nu+1}}{\Gamma(n+\frac32)\Gamma(n+\nu+\frac32)},\]
and it is the solution of the non-homogenous equation, see \cite{struve},
\begin{align*}
  x^2 \frac{d^2y}{dx^2} +x \frac{dy}{dx} +(x^2-\nu^2) y(x) = \frac{4(x/2)^{\nu+1}}{\sqrt{\pi} \Gamma(\nu+1/2)}.
\end{align*}
One can verify that  \[\lim_{q\to 1^-}H^{(1)}_{\nu}((1-q)z;q^2)=\lim_{q\to 1^-}H^{(2)}_{\nu}(q^{\nu+\frac12}(1-q)z;q^2)=\lim_{q\to 1^-}H^{(3)}_{\nu}(z;q^2)=H_\nu(z).\]
Oraby and Mansour \cite{struve} introduced three $q$-analogs of the   Bessel-Struve functions, $H^{(k)}_\nu(z;q^2),\,(k=1,\,2,\,3),$  they are defined by
\begin{align*}
  H_\nu^{(1)}(x;q^2)= (1+q) \dfrac{(x/1-q^2)^\nu}{\Gamma_{q^2}(\frac{1}{2})\Gamma_{q^2}(\nu+\frac{1}{2})}\int_{0}^{1} (q^2 t^2;q^2)_{\nu-\frac{1}{2}} \sin_q xt  \, d_qt,
\end{align*}
\begin{align*}
  H_\nu^{(2)}(x;q^2)= q^{-\nu-\frac{1}{2}}(1+q) \dfrac{(q^{-\nu-\frac{1}{2}}x/1-q^2)^\nu}{\Gamma_{q^2}(\frac{1}{2})\Gamma_{q^2}(\nu+\frac{1}{2})}\int_{0}^{1} (q^2 t^2;q^2)_{\nu-\frac{1}{2}} \S_q xt  \, d_qt,
\end{align*}
and
\begin{align*}
  H_\nu^{(3)}(x;q^2)= (1+q) \dfrac{(x/1+q)^\nu}{\Gamma_{q^2}(\frac{1}{2})\Gamma_{q^2}(\nu+\frac{1}{2})}\int_{0}^{1} (q^2 t^2;q^2)_{\nu-\frac{1}{2}} \sin(xt;q) \, d_qt  , x \in \mathbb{C},
\end{align*}
where $ \Re{(\nu)}>- \frac{1}{2}$.

\begin{lemma}
Let $u(x)$, and $v(x)$ be continuous functions at zero. The q-integration by parts rules
\begin{align}\label{555}
  \frac{1}{q} \int_{a}^{b} D_{q^{-1}}u(x) v(x) d_qx &= u(x/q) v(x)\Big | _a^b - \int_{a}^{b} u(x) D_q v(x) d_qx \nonumber  \\&
  = u(x/q) v(x/q)\Big | _a^b - \frac{1}{q} \int_{a}^{b} u(x/q) D_{q^{-1}}v(x) d_qx .
\end{align}
\end{lemma}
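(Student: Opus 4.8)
The plan is to derive both identities of \eqref{555} from the $q$-Leibniz rule together with the fundamental theorem of $q$-calculus, $\int_a^b (D_qf)(x)\,d_qx = f(b)-f(a)$, the latter being obtained by telescoping Jackson's sum \eqref{nun}: writing $q^n(D_qf)(aq^n)=\frac{f(aq^n)-f(aq^{n+1})}{(1-q)a}$ collapses the series to $f(a)-\lim_{n\to\infty}f(aq^n)$, and continuity of $f$ at zero identifies this limit as $f(0)$, so that the contribution of $0$ cancels in $\int_a^b=\int_0^b-\int_0^a$. First I would record the two asymmetric forms of the product rule,
\begin{align*}
 D_q\big(fg\big)(x) = f(x)(D_qg)(x)+g(qx)(D_qf)(x) = g(x)(D_qf)(x)+f(qx)(D_qg)(x),
\end{align*}
together with the bridge identity between the two $q$-derivatives obtained by direct computation from the definitions,
\begin{align*}
 D_q\big[f(x/q)\big] = \frac{f(x/q)-f(x)}{(1-q)x} = \frac1q\,(D_{q^{-1}}f)(x).
\end{align*}

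For the first equality I would apply the second form of the product rule to $f(x)=u(x/q)$ and $g(x)=v(x)$, so that the dilation $f(qx)=u(x)$ absorbs the shift and yields
\begin{align*}
 D_q\big[u(x/q)v(x)\big] = \frac1q (D_{q^{-1}}u)(x)\,v(x) + u(x)(D_qv)(x).
\end{align*}
Applying $\int_a^b\cdot\,d_qx$ and evaluating the left-hand side by the fundamental theorem gives the boundary term $u(x/q)v(x)\big|_a^b$, and rearranging produces the first line of \eqref{555}. The point requiring care here is the choice of the correct (asymmetric) form of the Leibniz rule: the other form would leave a factor $v(qx)$ in place of $v(x)$ and would fail to reproduce the stated right-hand side.

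For the second equality I would instead apply the first form of the product rule to the pair $f(x)=u(x/q)$, $g(x)=v(x/q)$, using the bridge identity on both factors to obtain
\begin{align*}
 D_q\big[u(x/q)v(x/q)\big] = \frac1q u(x/q)(D_{q^{-1}}v)(x) + \frac1q v(x)(D_{q^{-1}}u)(x).
\end{align*}
Again applying $\int_a^b\cdot\,d_qx$, using the fundamental theorem on the telescoping left-hand side to get $u(x/q)v(x/q)\big|_a^b$, and solving for $\frac1q\int_a^b (D_{q^{-1}}u)(x)v(x)\,d_qx$ yields the second line of \eqref{555}. Throughout, continuity of $u$ and $v$ at zero (hence of their $q^{-1}$-dilations and of the products $u(x/q)v(x)$ and $u(x/q)v(x/q)$) is exactly what guarantees convergence of the Jackson sums and the vanishing of the limit term $f(aq^n)\to f(0)$ that validates the fundamental theorem and therefore both boundary evaluations. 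I expect no genuine obstacle beyond careful bookkeeping of the $q$ and $q^{-1}$ dilations; the whole argument rests on the two bridge relations displayed above.
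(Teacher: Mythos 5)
The paper states this lemma without any proof (it is invoked as a known fact of $q$-calculus), so there is no in-paper argument to compare against; your proposal supplies the missing proof, and it is correct. Both asymmetric forms of the $q$-Leibniz rule, the bridge identity $D_q\big[f(x/q)\big]=\frac1q(D_{q^{-1}}f)(x)$, and the $q$-fundamental theorem $\int_a^b(D_qF)(x)\,d_qx=F(b)-F(a)$ all check out directly from the definitions, and your two pointwise identities
\begin{align*}
D_q\big[u(x/q)v(x)\big]&=\tfrac1q(D_{q^{-1}}u)(x)\,v(x)+u(x)(D_qv)(x),\\
D_q\big[u(x/q)v(x/q)\big]&=\tfrac1q\,u(x/q)(D_{q^{-1}}v)(x)+\tfrac1q\,v(x)(D_{q^{-1}}u)(x),
\end{align*}
integrate exactly to the two lines of (\ref{555}). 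Your choice of which asymmetric Leibniz form to pair with which product is indeed the crux, and you got both pairings right. This is the standard route to $q$-integration by parts, and it is presumably the argument the authors had in mind.

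One caveat: your closing claim that continuity of $u$ and $v$ at zero ``guarantees convergence of the Jackson sums'' is overstated. Continuity at zero does make $\int_0^a f\,d_qt$ converge absolutely when $f$ itself is continuous at zero, and it makes the telescoping integral $\int_a^b D_q[F]\,d_qx$ converge; but the integrals $\int u\,D_qv\,d_qx$ and $\int u(x/q)\,D_{q^{-1}}v\,d_qx$ contain derivative factors that need not be bounded on the sequences $\{aq^n\}$, $\{bq^n\}$. For instance, continuous-at-zero choices with $u(bq^n)=v(bq^n)=(-1)^n/\sqrt{n}$ give $u(bq^n)\big(v(bq^n)-v(bq^{n+1})\big)\sim 2/n$, so $\int u\,D_qv\,d_qx$ diverges. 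The correct reading, both of your proof and of the lemma as the paper states it, is that each identity holds whenever one of the two integrals on that line converges, convergence of the other then being automatic from the telescoping of $\int_a^b D_q[\,\cdot\,]\,d_qx$. This imprecision is inherited from the paper's own statement rather than introduced by you, and it is harmless in all of the paper's applications, where the integrands are polynomially or $q$-exponentially controlled.
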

\begin{lemma}\label{25454}
 \cite{gfdrf} Let the functions $f$ and $ g$ be defined and continuous on $[0,\infty[$. Assume that the
improper Riemann integrals of the functions $f(x)g(x)$ and $ f(x/q)g(x)$ exist on $[0,\infty[$. Then
\begin{align}\label{21457777}
  \int_{0}^{\infty} f(x) D_q g(x) dx&= \frac{f(0)g(0)}{1-q} \ln q - \frac{1}{q} \int_{0}^{\infty} g(x) D_{q^{-1}} f(x) dx \nonumber\\&= \frac{f(0)g(0)}{1-q} \ln q -  \int_{0}^{\infty} g(qx) D_q f(x) dx.
\end{align}
\end{lemma}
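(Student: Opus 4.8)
The plan is to pass from the improper integral to proper integrals on a compact interval $[\epsilon,N]\subset(0,\infty)$, expand the $q$-derivative, apply a dilation change of variable, and only then let $\epsilon\to0^+$ and $N\to\infty$. Substituting $D_qg(x)=\frac{g(x)-g(qx)}{(1-q)x}$ gives
\[
\int_\epsilon^N f(x)\,D_qg(x)\,dx=\frac{1}{1-q}\left[\int_\epsilon^N\frac{f(x)g(x)}{x}\,dx-\int_\epsilon^N\frac{f(x)g(qx)}{x}\,dx\right].
\]
In the second integral I would set $u=qx$, rewriting it as $\int_{q\epsilon}^{qN}\frac{f(u/q)g(u)}{u}\,du$. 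Shifting the range of integration by the factor $q$ is the mechanism that will simultaneously produce the $D_{q^{-1}}$-derivative in the interior and the boundary contribution near the origin.

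Next I would recombine the two integrals over their common range $[\epsilon,qN]$, which leaves two leftover pieces near the endpoints:
\[
\int_\epsilon^N f(x)\,D_qg(x)\,dx=\frac{1}{1-q}\int_\epsilon^{qN}\frac{g(x)\,[f(x)-f(x/q)]}{x}\,dx+\frac{1}{1-q}\int_{qN}^N\frac{f(x)g(x)}{x}\,dx-\frac{1}{1-q}\int_{q\epsilon}^\epsilon\frac{f(x/q)g(x)}{x}\,dx.
\]
Here the first integrand is exactly $-\frac1q\,g(x)\,D_{q^{-1}}f(x)$: since $D_{q^{-1}}f(x)=\frac{f(x)-f(x/q)}{(1-q^{-1})x}$ and $1-q^{-1}=-\frac{1-q}{q}$, one has $\frac{f(x)-f(x/q)}{(1-q)x}=-\frac1q\,D_{q^{-1}}f(x)$.

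For the limit, as $N\to\infty$ the near-top piece $\frac{1}{1-q}\int_{qN}^N\frac{f(x)g(x)}{x}\,dx$ vanishes by the Cauchy criterion for the convergent improper integral of $f(x)g(x)$. As $\epsilon\to0^+$, continuity of $f$ and $g$ at $0$ yields $f(x/q)g(x)\to f(0)g(0)$ uniformly on $[q\epsilon,\epsilon]$, while $\int_{q\epsilon}^\epsilon\frac{dx}{x}=-\ln q$; hence the near-bottom piece tends to $-f(0)g(0)\ln q$, and with the prefactor $-\frac{1}{1-q}$ it contributes $\frac{f(0)g(0)}{1-q}\ln q$. Collecting the three limits gives the first equality. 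For the second equality I would invoke the identity $D_{q^{-1}}f(x)=(D_qf)(x/q)$ and substitute $u=x/q$ in $\frac1q\int_0^\infty g(x)\,D_{q^{-1}}f(x)\,dx$, which converts it directly into $\int_0^\infty g(qx)\,D_qf(x)\,dx$.

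The main obstacle is the interchange of limit and integral in the near-bottom piece: this is precisely where the $\ln q$ term is created, and it is the continuity hypothesis at the origin that licenses replacing $f(x/q)g(x)$ by its value $f(0)g(0)$ on the shrinking interval $[q\epsilon,\epsilon]$. The remaining hypotheses—the existence of the improper Riemann integrals of $f(x)g(x)$ and $f(x/q)g(x)$—are what guarantee that the interior integral converges and that the top boundary term dies, so that the limits in $\epsilon$ and $N$ may be taken independently.
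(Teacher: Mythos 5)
The paper never proves this lemma at all --- it is imported wholesale from \cite{gfdrf} --- so your argument has to stand on its own. Its skeleton is the natural one and most of its mechanics are correct: the truncation to $[\epsilon,N]$, the dilation $u=qx$, the recombination over the common range $[\epsilon,qN]$ with two leftover boundary strips, the identity $\frac{f(x)-f(x/q)}{(1-q)x}=-\frac{1}{q}D_{q^{-1}}f(x)$, the evaluation $\int_{q\epsilon}^{\epsilon}\frac{dx}{x}=-\ln q$ that creates the term $\frac{f(0)g(0)}{1-q}\ln q$, and the derivation of the second equality from $D_{q^{-1}}f(x)=(D_qf)(x/q)$ are all sound.

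Two of your limit steps, however, are not justified by the stated hypotheses. The lesser one is the tail: the Cauchy criterion for $\int_0^\infty f(x)g(x)\,dx$ controls $\int_{qN}^{N}fg\,dx$, not $\int_{qN}^{N}\frac{fg}{x}\,dx$, and since the hypothesis permits conditional convergence you cannot bound the integrand pointwise; you need the second mean value theorem to extract the monotone factor $1/x$, writing $\int_{qN}^{N}\frac{fg}{x}\,dx=\frac{1}{qN}\int_{qN}^{\xi}fg\,dx$ for some $\xi\in[qN,N]$, after which Cauchy does finish. That is fixable. The serious one is your closing claim that the hypotheses ``guarantee that the interior integral converges'': collecting the three limits requires $\lim_{\epsilon\to 0,\,N\to\infty}\int_{\epsilon}^{qN}g\,D_{q^{-1}}f\,dx$ to exist, i.e.\ the right-hand integral of the lemma to exist. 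At infinity this does follow from the hypotheses (write $g\,D_{q^{-1}}f=-\frac{q}{1-q}\cdot\frac{f(x)g(x)-f(x/q)g(x)}{x}$ and apply Abel's test with the bounded monotone factor $1/x$), but at the origin the hypotheses say nothing: $f(x)g(x)$ and $f(x/q)g(x)$ are continuous at $0$, so their integrals are proper there, and continuity of $f,g$ alone does not make $g\,D_{q^{-1}}f$ improperly integrable near $0$. Concretely, take $q=\frac{1}{2}$, a bump $\psi$ supported in $(1,2)$, and set $f(x)=a_n\psi(2^nx)$, $g(x)=1+a_n\psi(2^nx)$ on $[2^{-n},2^{-n+1}]$ for $n\geq 1$ with $a_n=(-1)^n/\sqrt{n}$, and $f=0$, $g=1$ elsewhere: then $f,g$ are continuous with compact support, so the hypotheses hold trivially, yet the block integrals of $g\,D_{q^{-1}}f$ (respectively $f\,D_qg$) behave like $\mp 2/n$ up to telescoping terms, so $\int_{\epsilon}^{1}g\,D_{q^{-1}}f\,dx$ and $\int_{\epsilon}^{1}f\,D_qg\,dx$ both diverge as $\epsilon\to0^+$. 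Hence the existence of one of the two integrals in the identity is an unavoidable additional hypothesis (it is satisfied in every application made in this paper); once it is granted, your decomposition does transfer existence to the other side and yields both equalities, but as written your final step asserts something that is false in general.
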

In the following, we use (HSOqDE) to denote the homogenous second-order $q$-difference equation and (NHSOqDE) to denote the non-homogenous second-order $q$-difference equation.
\section{Extensions of Lagrangian Method for HSOqDE }
In this section, we extend the Lagrangian method introduced in \cite{conway} to functions  satisfying  homogenous second-order  $q$-difference equation of the form  ({\ref{mxlk}}) or ({\ref{mx}}) below.
\begin{theorem}\label{bvc}
Let $p(x)$ and $r(x)$ be continuous functions at zero. Let  $y(x)$ be a solution  of the second-order  $q$-difference equation
\begin{equation}\label{mxlk}
  \frac{1}{q}D_{q^{-1}} D_q y(x) +  p(x)D_{q^{-1}}y(x) + r(x) y(x) =0.
\end{equation}
\text{ Then}
\begin{align}\label{mkl}
 & \int f(x) \Big( \frac{1}{q} D_{q^{-1}} D_qh(x)+   p(x) D_{q^{-1}} h(x) +r(x) h(x)\Big) y(x) d_qx \nonumber\\&=
   f(x/q)\Big (y(x/q) D_{q^{-1}}h(x) - h(x/q) D_{q^{-1}}y(x) \Big),
\end{align}
where $ h(x) $ is an arbitrary function,   and $ f(x)$  is a solution of the first order  $q$-difference equation
\begin{equation}\label{fht}
  \frac{1}{q} D_{q^{-1}} f(x) =  p(x) f(x) .
\end{equation}
\end{theorem}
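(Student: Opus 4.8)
The plan is to prove the indefinite $q$-integral formula (\ref{mkl}) by differentiating its right-hand side, relying on the $q$-analogue of the fundamental theorem of calculus: since $D_q\int_0^x\phi(t)\,d_qt=\phi(x)$, it suffices to apply $D_q$ to $G(x):=f(x/q)\big(y(x/q)D_{q^{-1}}h(x)-h(x/q)D_{q^{-1}}y(x)\big)$ and to check that the outcome equals the integrand $f(x)\big(\tfrac1q D_{q^{-1}}D_qh(x)+p(x)D_{q^{-1}}h(x)+r(x)h(x)\big)y(x)$. This is the $q$-version of Conway's strategy of verifying an antiderivative by differentiation. The auxiliary identities I would assemble first are the shift relation $D_{q^{-1}}g(x)=(D_qg)(x/q)$, its consequences $D_q\big[g(x/q)\big]=\tfrac1q D_{q^{-1}}g(x)$ and the operator rule $D_qD_{q^{-1}}=\tfrac1q D_{q^{-1}}D_q$, the two forms of the $q$-Leibniz rule $D_q(uv)=u(x)D_qv(x)+v(qx)D_qu(x)=v(x)D_qu(x)+u(qx)D_qv(x)$, and the defining relation $\tfrac1q D_{q^{-1}}f(x)=p(x)f(x)$ from (\ref{fht}), which I rewrite as $D_q\big[f(x/q)\big]=p(x)f(x)$.

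First I would write $G(x)=f(x/q)W(x)$ with $W(x):=y(x/q)D_{q^{-1}}h(x)-h(x/q)D_{q^{-1}}y(x)$ and apply the second Leibniz form with $u(x)=f(x/q)$, $v=W$; since $u(qx)=f(x)$, this cleanly factors out $f(x)$ and gives $D_qG(x)=f(x)\big(D_qW(x)+p(x)W(x)\big)$. Next I would compute $D_qW$ term by term, at each product choosing the Leibniz form whose shift sends $y(x/q)\mapsto y(x)$ and $h(x/q)\mapsto h(x)$ (that is, placing these factors in the $u(qx)$ slot). Using the three derivative identities above, the two second-order pieces become $\tfrac1q y(x)D_{q^{-1}}D_qh(x)$ and $-\tfrac1q h(x)D_{q^{-1}}D_qy(x)$, while the two first-order cross terms are $\tfrac1q D_{q^{-1}}h(x)D_{q^{-1}}y(x)$ and $-\tfrac1q D_{q^{-1}}y(x)D_{q^{-1}}h(x)$, which cancel identically; this is the $q$-analogue of the classical cancellation $h'y'-y'h'=0$.

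At this stage $D_qG(x)=f(x)\big(\tfrac1q y(x)D_{q^{-1}}D_qh(x)-\tfrac1q h(x)D_{q^{-1}}D_qy(x)+p(x)W(x)\big)$, and I would invoke the hypothesis (\ref{mxlk}) in the form $-\tfrac1q D_{q^{-1}}D_qy(x)=p(x)D_{q^{-1}}y(x)+r(x)y(x)$ to eliminate the second $D_q$-derivative of $y$. The leading term $\tfrac1q y(x)D_{q^{-1}}D_qh(x)$ and the term $r(x)h(x)y(x)$ then already match the target, so everything reduces to the scalar identity $h(x)D_{q^{-1}}y(x)+y(x/q)D_{q^{-1}}h(x)-h(x/q)D_{q^{-1}}y(x)=y(x)D_{q^{-1}}h(x)$, after dividing out the common factor $p(x)$. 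The main obstacle is exactly this last step: unlike the ordinary case, $W$ carries the shifted values $y(x/q)$ and $h(x/q)$, so one cannot simply match arguments. I would close the gap by substituting $y(x)-y(x/q)=(1-q^{-1})x\,D_{q^{-1}}y(x)$ and $h(x)-h(x/q)=(1-q^{-1})x\,D_{q^{-1}}h(x)$ directly from the definition of $D_{q^{-1}}$, whereupon the two residual products coincide and the required equality follows. Careful tracking of every $q$-shift, choosing at each Leibniz step the form that lands the companion factor at the correct argument, is the delicate bookkeeping that the $q$-setting demands and where an error is easiest to make.
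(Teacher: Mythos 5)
Your proof is correct, but it takes a genuinely different route from the paper. The paper derives (\ref{mkl}) constructively: it applies the $q$-integration-by-parts rule (\ref{555}) twice to $\frac1q\int f\,y\,D_{q^{-1}}D_q h\,d_qx$, expands with the $q$-product rule, and then invokes (\ref{mxlk}) and (\ref{fht}) to make the unwanted integrals recombine into the stated left-hand side. You instead verify the antiderivative by $q$-differentiation: writing $G(x)=f(x/q)W(x)$ and showing $D_qG$ equals the integrand. Your computation checks out at every step --- the choice $u(x)=f(x/q)$ in the Leibniz rule does give $u(qx)=f(x)$ and $D_q[f(x/q)]=\tfrac1qD_{q^{-1}}f(x)=p(x)f(x)$; the operator identity $D_qD_{q^{-1}}=\tfrac1qD_{q^{-1}}D_q$ and the cross-term cancellation are right; and the residual identity $\big(h(x)-h(x/q)\big)D_{q^{-1}}y(x)=\big(y(x)-y(x/q)\big)D_{q^{-1}}h(x)$ does follow at once from the definition of $D_{q^{-1}}$, which is exactly the $q$-shift subtlety absent in Conway's classical case. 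Two remarks on what each approach buys. Your verification is more elementary and self-contained (no integration by parts at all, only the Leibniz rule and shift identities), and it isolates cleanly where (\ref{mxlk}) and (\ref{fht}) enter; note, though, that passing from $D_qG=\text{integrand}$ to the indefinite $q$-integral statement uses that functions with vanishing $D_q$-derivative and continuity at zero are constant, a point you should state explicitly --- it is covered by the continuity-at-zero hypotheses, and the paper's formal treatment of boundary terms is no more rigorous on this score. The paper's integration-by-parts machinery, on the other hand, transfers directly to the definite-integral version over $[0,\infty)$ (Theorem \ref{bvbvnc}), where genuine boundary contributions $\frac{f(0)\ln q}{1-q}(\cdots)$ appear and a pure differentiation argument would not suffice. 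One small phrasing issue: you speak of ``dividing out the common factor $p(x)$,'' which is illegitimate where $p$ vanishes; the correct (and sufficient) logic is that the bracketed identity holds pointwise, hence the $p(x)$-multiplied terms agree whether or not $p(x)=0$.
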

\begin{proof}
  Applying the  $q$-integration by parts rule ({$\ref{555}$})
  \begin{align*}
&\frac{1}{q}\int f(x)y(x)  D_{q^{-1}} D_qh(x) d_qx  =   (D_qh)(x/q)f(x/q) y(x/q)- \frac{1}{q} \int (D_qh)(x/q)D_{q^{-1}}(f(x) y(x)) d_qx.
\end{align*}
Therefore, by the $q$-product rule, we get
\begin{align*}
  \frac{1}{q}\int f(x)y(x)  D_{q^{-1}} D_qh(x) d_qx &= D_{q^{-1}}h(x) f(x/q) y(x/q)- \frac{1}{q} \int (D_qh)(x/q) f(x/q)D_{q^{-1}} y(x)d_qx  \\&-\frac{1}{q} \int (D_qh)(x/q)D_{q^{-1}}(f(x))y(x)d_qx.
\end{align*}
Applying ({$\ref{555}$}), we obtain
\begin{align*}
  \frac{-1}{q} \int (D_qh)(x/q) f(x/q)&D_{q^{-1}} y(x)d_qx = - h(x/q) f(x/q) D_{q^{-1}} y(x)\\&+\int  h(x) f(x) D_q D_{q^{-1}}y(x) d_qx + \int h(x) D_{q^{-1}} y(x) D_q f(x/q)d_qx.
\end{align*}
Using Equations ({$\ref{mxlk}$}) and  ({$\ref{fht}$}) yields
\begin{align*}
 \frac{1}{q}\int f(x)y(x)  D_{q^{-1}} D_qh(x) d_qx &= D_{q^{-1}}h(x) f(x/q) y(x/q) -h(x/q)f(x/q)D_{q^{-1}}y(x)   \\& - \int h(x) f(x) r(x) y(x) d_qx  -\int D_{q^{-1}}h(x)f(x) p(x) y(x)d_qx,
\end{align*}
and  we get the desired result.
\end{proof}
\begin{remark}
  It is worth noting that the right- hand side  of (${\ref{mkl}}$) can be represented as
\begin{align*}
  f(\frac{x}{q}) W_q(y,h)(\frac{x}{q}) = f(\frac{x}{q}) W_{q^{-1}}(y,h)(x),
\end{align*}
and the right- hand side  of ({$\ref{mkrl}$}) is $ f(x) W_{q^{-1}}(y,h)(x) $, where by $ W_q(y,z)(x)$ we mean
\begin{align*}
 W_q(y,z)(x) = y(x) D_q z(x) -z(x) D_q y(x),
\end{align*}
see \cite{sw,risha}.
\end{remark}
\begin{theorem}\label{brevc}
Let $p(x)$ and $r(x)$ be continuous functions at zero. Let  $y(x)$ be any solution  of the second-order  $q$-difference equation
\begin{equation}\label{mx}
  \frac{1}{q}D_{q^{-1}} D_q y(x) +  p(x)D_q y(x) + r(x) y(x) =0.
\end{equation}
\text{ Then}
\begin{align}\label{mkrl}
 & \int f(x) \Big( \frac{1}{q} D_{q^{-1}} D_qh(x)+   p(x) D_q h(x) +r(x) h(x)\Big) y(x) d_qx \nonumber\\&=
   f(x)\Big ( y(x) D_{q^{-1}}h(x) - h(x) D_{q^{-1}}y(x) \Big),
\end{align}
where $ h(x) $ is an arbitrary function  and $ f(x)$  is a solution of the first order  $q$-difference equation
\begin{equation}\label{frht5}
   D_q f(x) =  p(x) f(x).
\end{equation}
\end{theorem}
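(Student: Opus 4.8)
The plan is to prove \eqref{mkrl} in its equivalent antiderivative form. Since $\int(\cdot)\,d_qx$ is the $q$-antiderivative, it suffices to show that applying $D_q$ to the right-hand side returns the integrand. Writing $R(x)=f(x)W(x)$ with the $q$-Wronskian $W(x)=y(x)D_{q^{-1}}h(x)-h(x)D_{q^{-1}}y(x)$ of the Remark, the goal becomes
\[
D_q R(x)=f(x)\Big(\tfrac1q D_{q^{-1}}D_qh(x)+p(x)D_qh(x)+r(x)h(x)\Big)y(x).
\]
Two elementary facts drive everything: the shift rule $(D_{q^{-1}}g)(qx)=D_qg(x)$, valid for any $g$, and the operator relation $D_qD_{q^{-1}}=\tfrac1q D_{q^{-1}}D_q$ (both are one-line checks on $g(x)=x^{n}$). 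This differentiation route is the mirror image of the two integration-by-parts steps used for Theorem \ref{bvc}, now carried out with the $D_q$-flavoured choices in \eqref{555}; I find the direct computation cleaner to control.

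First I would compute $D_qW$ with the $q$-product rule $D_q(\phi\psi)(x)=\phi(x)D_q\psi(x)+\psi(qx)D_q\phi(x)$. Applying it with $\psi=D_{q^{-1}}h$ and $\psi=D_{q^{-1}}y$, and using $(D_{q^{-1}}h)(qx)=D_qh(x)$, $(D_{q^{-1}}y)(qx)=D_qy(x)$, the two cross terms $D_qh\,D_qy$ cancel and one is left with
\[
D_qW=\tfrac1q\,y\,D_{q^{-1}}D_qh-\tfrac1q\,h\,D_{q^{-1}}D_qy.
\]
Here the hypothesis enters: equation \eqref{mx} gives $\tfrac1q D_{q^{-1}}D_qy=-p\,D_qy-r\,y$, so the second term becomes $h(p\,D_qy+r\,y)$ and hence $D_qW=\tfrac1q\,y\,D_{q^{-1}}D_qh+p\,h\,D_qy+r\,h\,y$.

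Next I would differentiate $R=fW$ by the same product rule, $D_q(fW)(x)=f(x)D_qW(x)+W(qx)D_qf(x)$, and substitute the first-order equation \eqref{frht5}, $D_qf=pf$, to obtain $D_qR=f\,D_qW+p\,f\,W(qx)$. Inserting the expression for $D_qW$ produces the two target terms $\tfrac1q fy\,D_{q^{-1}}D_qh$ and $r\,f\,h\,y$ immediately, and leaves the combination $p\,f\,h\,D_qy+p\,f\,W(qx)$ to be reconciled with the still-missing term $p\,f\,y\,D_qh$. This reconciliation is the one genuinely delicate point. Expanding $W(qx)=y(qx)D_qh-h(qx)D_qy$ (again via the shift rule) and using the defining relations $h(x)-h(qx)=(1-q)x\,D_qh(x)$ and $y(x)-y(qx)=(1-q)x\,D_qy(x)$, the stray $q$-shifts telescope:
\[
p\,f\,h\,D_qy+p\,f\,W(qx)=p\,f\,D_qh\big[\,y(qx)+(1-q)x\,D_qy\,\big]=p\,f\,y\,D_qh,
\]
since $y(qx)+(1-q)x\,D_qy=y(x)$.

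The main obstacle is thus purely this bookkeeping of $q$-shifts: one must choose the two product-rule splittings so that the cross terms cancel in $D_qW$, and so that the surviving $W(qx)$ recombines, through $y(qx)+(1-q)xD_qy=y(x)$, into the single clean term $p\,f\,y\,D_qh$. Once these cancellations are in place, the three surviving pieces assemble exactly into $f\big(\tfrac1q D_{q^{-1}}D_qh+pD_qh+rh\big)y$, which is the integrand, and indefinite $q$-integration then yields \eqref{mkrl}.
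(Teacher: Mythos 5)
Your proof is correct, but it follows a genuinely different route from the paper's. The paper omits the proof of Theorem \ref{brevc} entirely, saying it runs parallel to the proof of Theorem \ref{bvc}, which proceeds by applying the $q$-integration-by-parts rule (\ref{555}) twice inside the integral and then invoking the two hypotheses; you instead verify directly that the right-hand side of (\ref{mkrl}) is a $q$-antiderivative of the integrand, using only the $q$-product rule, the shift identity $(D_{q^{-1}}g)(qx)=D_qg(x)$, and the commutation $D_qD_{q^{-1}}=\tfrac1q D_{q^{-1}}D_q$. Your intermediate steps all check out: the cross terms in $D_qW$ cancel exactly as claimed, and the reconciliation $p\,f\,h\,D_qy+p\,f\,W(qx)=p\,f\,y\,D_qh$ via $y(qx)+(1-q)x\,D_qy(x)=y(x)$ is correct. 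What your route buys: it is self-contained and elementary (no integration-by-parts lemma is needed), and it isolates precisely where the hypotheses (\ref{mx}) and (\ref{frht5}) enter. What the paper's route buys: the integration-by-parts machinery carries over verbatim to the definite-integral version, Theorem \ref{bvbvnc}, where the integral is an ordinary Riemann integral over $[0,\infty)$ and boundary terms at $0$ appear; a pure antiderivative verification does not apply there. Two minor caveats to tighten: checking your two ``elementary facts'' only on $g(x)=x^n$ proves them for polynomials, whereas they should be (and easily are) verified from the definitions for arbitrary $g$; and passing from $D_qR=\text{integrand}$ to the stated indefinite $q$-integral identity tacitly uses that a Jackson $q$-antiderivative is unique up to an additive constant for functions continuous at $0$ --- the same implicit convention the paper adopts, so this is a wording issue, not a gap.
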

\begin{proof}
 The proof follows similarity as the proof of Theorem {$\ref{bvc}$} and is omitted.
\end{proof}
\begin{theorem}\label{bvbvnc}
Let $p(x)$ and $r(x)$ be continuous functions at zero. Let  $y(x)$ be any solution  of the second-order  $q$-difference Equation $({\ref{mxlk}})$.
\text{ Then}
\begin{align*}
 & \int_{0}^{\infty} f(x) \Big[ \frac{1}{q} D_{q^{-1}} D_qh(x)+   p(x) D_{q^{-1}} h(x) +r(x) h(x)\Big] y(x) dx \nonumber\\& =\frac{ f(0) \ln q}{1-q} \left(D_qh(0)y(0)-h(0)D_qy(0)\right),
\end{align*}
where $ h(x) $ is an arbitrary function  and $ f(x)$  is a solution of the first order  $q$-difference Equation $({\ref{fht}})$.
\end{theorem}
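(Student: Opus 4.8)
The plan is to combine the indefinite $q$-integral already computed in Theorem~\ref{bvc} with the integration-by-parts formula of Lemma~\ref{25454}, which is precisely the device that converts a $q$-antiderivative over $[0,\infty)$ into a boundary value at the origin. Write the integrand as
\[
\Phi(x):=f(x)\Big[\tfrac1q D_{q^{-1}}D_q h(x)+p(x)D_{q^{-1}}h(x)+r(x)h(x)\Big]y(x),
\]
and set
\[
G(x):=f(x/q)\big(y(x/q)D_{q^{-1}}h(x)-h(x/q)D_{q^{-1}}y(x)\big),
\]
which is the right-hand side of (\ref{mkl}). By Theorem~\ref{bvc} the function $G$ is an indefinite $q$-integral of $\Phi$, and since $D_q$ and the Jackson $q$-integral are inverse operations, the fundamental theorem of $q$-calculus yields the pointwise identity $D_q G(x)=\Phi(x)$. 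Hence the quantity to be evaluated is simply $\int_0^\infty D_q G(x)\,dx$.

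Next I would apply Lemma~\ref{25454} with the constant function $1$ playing the role of $f$ and with $G$ playing the role of $g$. Under the continuity and integrability of $G$ on $[0,\infty)$ the lemma gives
\[
\int_0^\infty D_q G(x)\,dx=\frac{1\cdot G(0)}{1-q}\,\ln q-\frac1q\int_0^\infty G(x)\,D_{q^{-1}}(1)\,dx.
\]
Because $D_{q^{-1}}(1)=0$, the surviving integral vanishes identically, leaving $\int_0^\infty\Phi(x)\,dx=\dfrac{G(0)}{1-q}\,\ln q$; this is exactly where the factor $\frac{\ln q}{1-q}$ in the statement is produced, and it is the step that replaces the finite boundary evaluation seen in the proof of Theorem~\ref{bvc}.

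It then remains to evaluate $G(0)$. Since $x/q=0$ when $x=0$, we obtain $G(0)=f(0)\big(y(0)D_{q^{-1}}h(0)-h(0)D_{q^{-1}}y(0)\big)$, and because $(D_qg)(0)=(D_{q^{-1}}g)(0)=g'(0)$ for any function differentiable at the origin, $D_{q^{-1}}$ may be replaced by $D_q$ at the point $0$. This turns $G(0)$ into $f(0)\big(D_qh(0)\,y(0)-h(0)\,D_qy(0)\big)$ and yields precisely the asserted right-hand side. I expect the main obstacle to be analytic rather than algebraic: one must secure the hypotheses of Lemma~\ref{25454}, namely that $G$ is integrable on $[0,\infty)$ and that the contribution at infinity genuinely vanishes, which requires suitable decay of $f$, $y$, $h$ and their $q$-derivatives. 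A fully self-contained alternative, parallel to the proof of Theorem~\ref{bvc}, would instead apply Lemma~\ref{25454} termwise to the three pieces of $\Phi$ and use the equations (\ref{mxlk}) and (\ref{fht}) to cancel the remaining integrals; this reproduces the same boundary term at $0$ but relies on the same decay conditions to discard the contributions at infinity.
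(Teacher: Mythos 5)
Your proof is correct, but it takes a genuinely different route from the paper's. The paper proves Theorem~\ref{bvbvnc} by redoing, at the level of improper Riemann integrals, the entire two-step integration-by-parts computation from Theorem~\ref{bvc}: it applies Lemma~\ref{25454} twice (once to $f\,y\,D_qD_{q^{-1}}h$ and once to the resulting cross term), collects the two boundary contributions $\frac{D_qh(0)f(0)y(0)}{1-q}\ln q$ and $-\frac{h(0)f(0)D_qy(0)}{1-q}\ln q$, and then cancels the surviving integrals using Equations~(\ref{mxlk}) and~(\ref{fht}). You instead reuse Theorem~\ref{bvc} itself, reading its conclusion as the pointwise antiderivative identity $D_qG=\Phi$, and apply Lemma~\ref{25454} only once, with the constant function $1$ in the role of the lemma's $f$, so that the $D_{q^{-1}}$ term dies and only the boundary value $\frac{G(0)\ln q}{1-q}$ survives; evaluating $G(0)$ and using $(D_{q^{-1}}g)(0)=(D_qg)(0)=g'(0)$ finishes the argument. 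Your route is shorter, makes transparent that the factor $\frac{\ln q}{1-q}$ is produced by a single boundary evaluation at the origin, and concentrates the analytic hypotheses into one condition (improper Riemann integrability of $G$ on $[0,\infty)$), whereas the paper's route needs the lemma's integrability hypotheses for each of the several intermediate integrals it generates; conversely, the paper's computation is self-contained and does not lean on interpreting an indefinite $q$-integral identity as a $D_q$-antiderivative statement. Both arguments share the same unstated decay/integrability assumptions (neither the theorem nor the paper's proof pins them down), and you are right to flag this as the only real gap --- it is a gap of the paper's statement, not of your argument.
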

\begin{proof}
 Applying Lemma {$\ref{25454}$},  we get
  \begin{align*}
&\int_{0}^{\infty} f(x)y(x) D_q D_{q^{-1}} h(x)  dx = \frac{D_q h(0)f(0) y(0)}{1-q} \ln q - \frac{1}{q} \int_{0}^{\infty} D_{q^{-1}} h(x) D_{q^{-1}}(f(x) y(x)) dx.
\end{align*}
From the $q$-product rule, we obtain
\begin{align*}
  &\int_{0}^{\infty} f(x)y(x) D_q D_{q^{-1}} h(x)  dx = \frac{D_q h(0)f(0) y(0)}{1-q} \ln q - \frac{1}{q} \int_{0}^{\infty} D_{q^{-1}} h(x) f(x/q) D_{q^{-1}} y(x) dx \\&-\frac{1}{q} \int_{0}^{\infty} D_{q^{-1}} h(x) y(x) D_{q^{-1}} f(x) dx.
\end{align*}
Applying Lemma {$\ref{25454}$}, we get
\begin{align*}
  - \int_{0}^{\infty} D_q h(x/q) f(x/q) D_{q^{-1}} y(x) dx &= - \frac{ h(0)f(0) D_q y(0)}{1-q} \ln q + \int_{0}^{\infty} h(x) f(x) D_q D_{q^{-1}}y(x) dx \\&+ \int_{0}^{\infty} h(x) D_{q^{-1}} y(x) D_q f(x/q)dx.
\end{align*}
Using Equations ({$\ref{mxlk}$}) and ({$\ref{fht}$}) yields
\begin{align*}
 \frac{1}{q}\int_{0}^{\infty} f(x)y(x)  D_{q^{-1}} D_qh(x) dx &= \frac{D_q h(0)f(0) y(0)}{1-q} \ln q -\frac{ h(0)f(0) D_q y(0)}{1-q} \ln q \\&- \int_{0}^{\infty}D_{q^{-1}}h(x) y(x) f(x) p(x)dx - \int_{0}^{\infty} r(x) h(x) y(x) f(x) dx.
\end{align*}
Thus, we get the desired result.
\end{proof}
\section{Applications of Lagrangian method to HSOqDE}
There are  unlimited number of cases as $h (x)$ is arbitrary. The art of using
Equation  (${\ref{mkl}}$) is to choose $h(x)$ to give  interesting $q$-integrals. In this section, we introduce  applications to Theorem {\ref{bvc}} and Theorem {\ref{brevc}} to define $q$-integrals of $q$-Bessel functions, and some $q$-orthogonal polynomials.
\begin{theorem}\label{thmn1}
Let $m$ and $\nu$ be complex numbers. If
$\Re  (\nu) > -1 $ and $\Re  (m+\nu) > 0$, we get the $q$-integral
\begin{align}\label{dodo}
 &\int x^{m+1} \left( \frac{1}{(1-q)^2} -\dfrac{ [\nu]^2_q - q^{\nu-m} [m]^2_q}{x^2}\right)J_\nu^{(3)}( x ;q^2) d_qx \nonumber \\&= q^{\nu-m }x^{m+1}\left(\frac{ [m]_q }{x}J_\nu^{(3)}( \frac{ x }{q};q^2) -\frac{1}{q} D_{q^{-1}}J_\nu^{(3)}( x;q^2) \right),
\end{align}
or equivalently,
\begin{align}\label{jo}
 &\int x^{m+1} \left(\frac{ 1}{(1-q)^2} -\dfrac{ [\nu]^2_q - q^{\nu-m} [m]^2_q}{x^2}\right)J_\nu^{(3)}( x ;q^2) d_qx \nonumber \\&=q^{\nu-m} x^{m+1}  \left(\frac{([m]_q-[\nu]_q) }{x}J_\nu^{(3)}( \frac{ x}{q};q^2)+\frac{J_{\nu+1}^{(3)}(x;q^2)}{1-q} \right).
\end{align}
In particular,
\begin{align}\label{ghabg}
&\int x^{\nu+1} J_\nu^{(3)}( x ;q^2) d_qx = (1-q) x^{\nu+1}  J_{\nu+1}^{(3)}(x;q^2),
\end{align}
and
\begin{align}\label{bgnmjjsjs}
&\int x^{1-\nu} J_\nu^{(3)}( x ;q^2) d_qx =-(1-q) {\Big(\frac{x}{q}\Big)}^{1-\nu} J_{\nu-1}^{(3)}(\frac{x}{q};q^2).
\end{align}
\end{theorem}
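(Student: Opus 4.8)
The plan is to read (\ref{dodo}) as a single application of Theorem~\ref{bvc} with $y(x)=J_\nu^{(3)}(x;q^2)$, so the first and decisive task is to exhibit the homogeneous equation (\ref{mxlk}) that $J_\nu^{(3)}(x;q^2)$ obeys. I would substitute the series $J_\nu^{(3)}(x;q^2)=\sum_{n\ge0}c_nx^{2n+\nu}$, with $c_n$ proportional to $(-1)^nq^{n(n+1)}/\big((q^2;q^2)_n(q^{2\nu+2};q^2)_n\big)$, into $\tfrac1qD_{q^{-1}}D_q+p(x)D_{q^{-1}}+r(x)$ and fix $p,r$ by demanding that the coefficient of every power $x^{2n+\nu-2}$ vanish. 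Using $D_qx^{k}=[k]_qx^{k-1}$, $D_{q^{-1}}x^{k}=q^{1-k}[k]_qx^{k-1}$ and the identity $1+q[k-1]_q=[k]_q$, the ``diagonal'' terms in $c_n$ collapse to $q^{-\nu}\big(q^{-2n}[2n+\nu]_q^2-[\nu]_q^2\big)c_n$, and equating this with the off-diagonal contribution $q^{-\nu}c_{n-1}/(1-q)^2$ reproduces the true ratio $c_n/c_{n-1}=-q^{2n}/\big((1-q^{2n})(1-q^{2n+2\nu})\big)$ precisely when
\[
p(x)=\frac{1}{qx},\qquad r(x)=\frac{q^{-\nu}}{(1-q)^2}-\frac{q^{-\nu}[\nu]_q^2}{x^2}.
\]

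With $p(x)=1/(qx)$, the auxiliary equation (\ref{fht}) becomes $\tfrac1qD_{q^{-1}}f=f/(qx)$, solved by $f(x)=x$. I would then invoke Theorem~\ref{bvc} with this $f$, with $y=J_\nu^{(3)}(x;q^2)$, and with the simplest admissible choice $h(x)=x^{m}$. A short computation gives $\tfrac1qD_{q^{-1}}D_qx^{m}+p(x)D_{q^{-1}}x^{m}=q^{-m}[m]_q^2x^{m-2}$ (again by $1+q[m-1]_q=[m]_q$), so after adding $r(x)x^{m}$, multiplying by $f(x)=x$, and scaling the whole identity by $q^{\nu}$, the left-hand integrand becomes exactly $x^{m+1}\big(\tfrac1{(1-q)^2}-\tfrac{[\nu]_q^2-q^{\nu-m}[m]_q^2}{x^2}\big)$. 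On the right of (\ref{mkl}) one has $f(x/q)=x/q$, $h(x/q)=q^{-m}x^{m}$ and $D_{q^{-1}}h(x)=q^{1-m}[m]_qx^{m-1}$; collecting the powers of $q$ (including the extra $q^{\nu}$) turns $f(x/q)\big(y(x/q)D_{q^{-1}}h-h(x/q)D_{q^{-1}}y\big)$ into the right-hand side of (\ref{dodo}). The hypotheses $\Re(\nu)>-1$ and $\Re(m+\nu)>0$ enter only to guarantee convergence of the Jackson integral, whose integrand behaves like $x^{m+\nu-1}$ near $0$.

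To obtain the equivalent form (\ref{jo}) I would substitute the standard lowering relation
\[
\frac{1}{q}D_{q^{-1}}J_\nu^{(3)}(x;q^2)=\frac{[\nu]_q}{x}J_\nu^{(3)}\!\Big(\frac{x}{q};q^2\Big)-\frac{1}{1-q}J_{\nu+1}^{(3)}(x;q^2)
\]
into (\ref{dodo}); the $J_\nu^{(3)}(x/q;q^2)$ coefficient then contracts to $[m]_q-[\nu]_q$ and the $J_{\nu+1}^{(3)}/(1-q)$ term appears. The two special cases are specializations. Putting $m=\nu$ annihilates both the $1/x^2$ term in the integrand and the factor $[m]_q-[\nu]_q$, leaving $\int\tfrac{x^{\nu+1}}{(1-q)^2}J_\nu^{(3)}\,d_qx=\tfrac{x^{\nu+1}}{1-q}J_{\nu+1}^{(3)}$, which is (\ref{ghabg}) after clearing $(1-q)^2$. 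Putting $m=-\nu$ again kills the $1/x^2$ term because $q^{2\nu}[-\nu]_q^2=[\nu]_q^2$, and inserting the companion relation
\[
\frac{1}{q}D_{q^{-1}}J_\nu^{(3)}(x;q^2)=\frac{[-\nu]_q}{x}J_\nu^{(3)}\!\Big(\frac{x}{q};q^2\Big)+\frac{q^{-\nu-1}}{1-q}J_{\nu-1}^{(3)}\!\Big(\frac{x}{q};q^2\Big)
\]
into (\ref{dodo}) and clearing $(1-q)^2$ collapses the right-hand side to $-(1-q)(x/q)^{1-\nu}J_{\nu-1}^{(3)}(x/q;q^2)$, which is (\ref{bgnmjjsjs}). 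Both contiguous relations are verified by matching series coefficients.

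The main obstacle is the first step: determining the exact $q$-difference equation (\ref{mxlk}) for $J_\nu^{(3)}(x;q^2)$ and, in particular, carrying the factor $q^{-\nu}$ in $r(x)$ together with the compensating overall $q^{\nu}$ correctly through to the normalization of (\ref{dodo}). Since the Hahn--Exton equation is not in self-adjoint form, the delicate part is the bookkeeping of the shifts $x\leftrightarrow x/q$ and of the Gaussian factors $q^{n(n+1)}$ in the series; once $p(x)$ and $r(x)$ are correct, the remainder reduces to routine $q$-arithmetic together with the two contiguous relations above.
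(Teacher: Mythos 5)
Your proposal is correct and follows essentially the same route as the paper's proof: both identify $p(x)=1/(qx)$ and $r(x)$ from the Hahn--Exton $q$-difference equation, take $f(x)=x$ and $h(x)=x^m$ in Theorem \ref{bvc}, and then obtain (\ref{jo}), (\ref{ghabg}), (\ref{bgnmjjsjs}) via the lowering relation and the contiguous relation for $J_\nu^{(3)}$ (your ``companion relation'' is exactly the composition of the paper's relation (\ref{4215f}) with \cite[Eq.(2.14)]{lommel2}). The only real difference is that you re-derive the $q$-difference equation (\ref{mnbvv}) from the power series, whereas the paper simply cites it from \cite{sw}; your series computation and recurrence ratio check out, so this is a self-contained but equivalent version of the same argument.
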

\begin{proof}
The third Jackson $q$-Bessel function  $J_\nu^{(3)}(x;q^2)$  satisfies the second-order $q$-difference equation \cite{sw}
\begin{align}\label{mnbvv}
&\frac{1}{q} D_{q^{-1}}D_q y(x) + \frac{1}{qx} D_{q^{-1}} y(x) +\frac{q^{-v}}{(1-q)^2}\left(1-\frac{(1-q^{\nu})^2}{x^2}\right)y(x)=0.
\end{align}\label{bgvsdd}
  By comparing  Equation (${\ref{mnbvv}}$) with Equation ({$\ref{mxlk}$}),  we obtain
\begin{align}
  p(x) = \frac{1}{qx},\quad \quad r(x) =  \frac{q^{-v}}{(1-q)^2}\left(1-\frac{(1-q^{\nu})^2}{x^2}\right).
\end{align}
Thus $f(x) = x$ is a solution of Equation (${\ref{fht}}$). Also,
Equation (${\ref{mkl}}$) associated with the  third Jackson $q$-Bessel function will be
\begin{align}\label{maj}
  & \int x (L_{q,\nu}^{(3)}h)(x) J_\nu^{(3)}( x ;q^2) d_qx =
   \frac{x}{q}\Big ( J_\nu^{(3)}( \frac{x}{q};q^2) D_{q^{-1}}h(x) - h(x/q) D_{q^{-1}}J_\nu^{(3)}( x;q^2) \Big),
\end{align}
where
\begin{align*}
  (L_{q,\nu}^{(3)}h)(x) =  \frac{1}{q} D_{q^{-1}} D_qh(x)+   \frac{1}{qx} D_{q^{-1}} h(x) + \frac{q^{-v}}{(1-q)^2}\left(1-\frac{(1-q^{\nu})^2}{x^2}\right)h(x).
\end{align*}
Substituting with $ h(x)=x^m$ into Equation (${\ref{maj}}$) yields
\begin{align*}
 &\int x^{m+1} \left( \frac{1}{(1-q)^2} -\dfrac{[\nu]_q^2 - q^{\nu-m} [m]_q^2}{x^2}\right)J_\nu^{(3)}( x ;q^2) d_qx \\&= q^{\nu-m }x^{m+1}\left(\frac{ [m]_q J_\nu^{(3)}( \frac{ x }{q};q^2)}{x}-\frac{1}{q} D_{q^{-1}}J_\nu^{(3)}( x;q^2) \right),
\end{align*}
using the $q$-difference equation \cite[Eq.(3.5)]{lommel2}
\begin{align}\label{4215f}
  D_{q^{-1}} J_\nu^{(3)}(x;q^2)= \frac{q[\nu]_q}{x}J_\nu^{(3)}(\frac{x}{q};q^2)-\frac{q}{(1-q)}J_{\nu+1}^{(3)}(x;q^2),
\end{align}
we obtain
 \begin{align}
 &\int x^{m+1} \left(\frac{ 1}{(1-q)^2} -\dfrac{ [\nu]^2_q - q^{\nu-m} [m]^2_q}{x^2}\right)J_\nu^{(3)}( x ;q^2) d_qx \nonumber\\& =q^{\nu-m} x^{m+1}  \left(\frac{([m]_q-[\nu]_q) }{x}J_\nu^{(3)}( \frac{ x}{q};q^2)+\frac{J_{\nu+1}^{(3)}(x;q^2)}{1-q} \right).
\end{align}
Substituting  with $ m = \nu$, in Equation (${\ref{jo}}$) gives (${\ref{ghabg}}$).
Substituting  with $m=-\nu$,  in Equation (${\ref{jo}}$) yields
\begin{align*}
&\int x^{1-\nu} J_\nu^{(3)}( x ;q^2) d_qx =q^{2\nu}(1-q) x^{1-\nu} \left( J_{\nu+1}^{(3)}(x;q^2)-\frac{q^{-\nu}(1-q^{2\nu})}{x}J_{\nu}^{(3)}(\frac{x}{q};q^2) \right).
\end{align*}
 Applying  \cite[Eq.(2.14)]{lommel2} (with $x$ is replaced by $\frac{x}{q}$  )
\begin{align*}
  J_{\nu+1}^{(3)}(x;q^2) -\frac{q^{-\nu}(1-q^{2\nu})}{x}J_{\nu}^{(3)}(\frac{x}{q};q^2)=-q^{-1-\nu}J_{\nu-1}^{(3)}(\frac{x}{q};q^2),
\end{align*}
we get (${\ref{bgnmjjsjs}}$) and completes the proof of the theorem.
\end{proof}
\begin{remark}
  Equation (${\ref{ghabg}}$) is equivalent to \cite[Eq.(2.10)]{lommel2}  (with $\nu$ is replaced by $\nu+1$ )
\begin{align*}
  D_q \left(x^{\nu+1} J_{\nu+1}^{(3)}( x ;q^2)  \right) = \frac{x^{\nu+1}}{1-q}J_\nu^{(3)}( x ;q^2).
\end{align*}
\end{remark}
\begin{proposition}
 Let $\mu$ and $\nu$ be  a complex number. Assume that $\Re (\mu) > -1 $ and $\Re (\nu) > -1 $. Then
  \begin{align*}
    &\int x \left(\frac{ q^{-\nu}[\nu-\mu]_q}{1-q} +\dfrac{ q^{-\mu} [\mu]^2_q- q^{-\nu} [\nu]^2_q}{x^2}\right)J_\nu^{(3)}( x ;q^2)J_\mu^{(3)}( x ;q^2) d_qx \nonumber \\&= ([\mu]_q-[\nu]_q )J_\nu^{(3)}( \frac{ x}{q};q^2)J_\mu^{(3)}( \frac{ x}{q};q^2)+\frac{x}{1-q}\left(J_{\nu+1}^{(3)}(x;q^2)J_\mu^{(3)}( \frac{ x}{q};q^2)-J_\nu^{(3)}( \frac{ x}{q};q^2)J_{\mu+1}^{(3)}(x;q^2)\right).
  \end{align*}
\end{proposition}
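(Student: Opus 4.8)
The plan is to apply Theorem \ref{bvc} exactly as in the proof of Theorem \ref{thmn1}, taking $y(x) = J_\nu^{(3)}(x;q^2)$, which satisfies (\ref{mnbvv}), so that $p(x) = \frac{1}{qx}$, $f(x) = x$, and the identity (\ref{maj}) is available. The only change is that I would choose the arbitrary function to be a \emph{second} $q$-Bessel function, $h(x) = J_\mu^{(3)}(x;q^2)$, rather than the monomial $x^m$ used before.

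The key observation that makes the left-hand side collapse to the stated coefficient is that $h = J_\mu^{(3)}$ is itself annihilated by the operator in (\ref{mnbvv}) with $\nu$ replaced by $\mu$. Concretely, I would compute $(L_{q,\nu}^{(3)} J_\mu^{(3)})(x)$ — the operator from the proof of Theorem \ref{thmn1} applied to $J_\mu^{(3)}$ — by rewriting its second-order part through the $\mu$-equation, $\frac{1}{q} D_{q^{-1}}D_q J_\mu^{(3)}(x) + \frac{1}{qx} D_{q^{-1}} J_\mu^{(3)}(x) = -\frac{q^{-\mu}}{(1-q)^2}\bigl(1 - \frac{(1-q^\mu)^2}{x^2}\bigr) J_\mu^{(3)}(x)$. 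Substituting this into $L_{q,\nu}^{(3)}$ leaves only the difference of the two zeroth-order coefficients acting on $J_\mu^{(3)}$, and after converting $1-q^\nu = (1-q)[\nu]_q$ and $1-q^\mu = (1-q)[\mu]_q$ this difference becomes exactly $\frac{q^{-\nu}[\nu-\mu]_q}{1-q} + \frac{q^{-\mu}[\mu]_q^2 - q^{-\nu}[\nu]_q^2}{x^2}$, which is the bracketed factor on the left of the claim. Hence the left-hand side equals the right-hand side of (\ref{maj}) with $h = J_\mu^{(3)}$, namely $\frac{x}{q}\bigl(J_\nu^{(3)}(\frac{x}{q};q^2)\, D_{q^{-1}} J_\mu^{(3)}(x;q^2) - J_\mu^{(3)}(\frac{x}{q};q^2)\, D_{q^{-1}} J_\nu^{(3)}(x;q^2)\bigr)$.

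To reach the final closed form I would then eliminate the two $q^{-1}$-derivatives using the contiguous relation (\ref{4215f}) for both orders $\nu$ and $\mu$. Inserting $D_{q^{-1}} J_\nu^{(3)}(x;q^2) = \frac{q[\nu]_q}{x}J_\nu^{(3)}(\frac{x}{q};q^2) - \frac{q}{1-q}J_{\nu+1}^{(3)}(x;q^2)$ together with its $\mu$-analogue, the prefactor $\frac{x}{q}$ cancels the $q/x$ and $q/(1-q)$ factors; the $\frac{[\mu]_q}{x}$ and $\frac{[\nu]_q}{x}$ contributions combine into $([\mu]_q - [\nu]_q)\,J_\nu^{(3)}(\frac{x}{q};q^2)J_\mu^{(3)}(\frac{x}{q};q^2)$, while the $J_{\mu+1}^{(3)}$ and $J_{\nu+1}^{(3)}$ contributions assemble into the stated antisymmetric $q$-Wronskian combination divided by $1-q$.

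The routine-but-delicate part — and the main place where an error can creep in — is the sign and notational bookkeeping when simplifying $(L_{q,\nu}^{(3)} J_\mu^{(3)})(x)$: one must pass carefully between the $q^{\pm\nu}$ form of the coefficients in (\ref{mnbvv}) and the $[\cdot]_q$ form used in the statement, and must check that the antisymmetric combination of $q^{-1}$-derivatives produces precisely the minus sign separating the two $J_{\cdot+1}^{(3)}$ products. Everything else is a direct specialization of (\ref{maj}), so no new convergence argument is required beyond the hypotheses $\Re(\mu) > -1$ and $\Re(\nu) > -1$, which guarantee that the defining series of $J_\nu^{(3)}$, $J_\mu^{(3)}$ and the associated $q$-integral are well defined.
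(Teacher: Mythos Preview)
Your proposal is correct and follows exactly the paper's approach: the paper's proof is the single line ``The proof follows by substituting with $h(x)=J_\mu^{(3)}(x;q^2)$ in Equation (\ref{maj})'', and your write-up simply fills in the algebra of that substitution, using the $\mu$-version of (\ref{mnbvv}) to collapse $(L_{q,\nu}^{(3)} J_\mu^{(3)})(x)$ and then (\ref{4215f}) to rewrite the right-hand side.
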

\begin{proof}
  The proof follows by substituting with  $h(x)=J_\mu^{(3)}( x ;q^2)$ in Equation (${\ref{maj}}$).
\end{proof}
\begin{theorem}\label{thmnbs}
Let $\nu$ and $n$ be a complex number with $\Re  (\nu) > -1 $. Set
\begin{align*}
 & \tilde{C}_{n,\nu} := q^{\frac{3n+\nu+2}{2}}(1-q) A_{n,\nu}, \quad \quad \hat{C}_{n,\nu} := q^{\frac{3n+\nu+1}{2}}(1-q) A_{n,\nu}\\&
 D_{n,\nu}(x) := \frac{q^{-n}[n]_q-q^{-n}[\nu]_q}{x}\sin(\frac{q^{\frac{-1}{2}(n+\nu+2)}x}{1-q};q) +\frac{q^{\frac{-1}{2}(n+\nu+2)}}{1-q}\cos(\frac{q^{\frac{-1}{2}(n+\nu+1)}x}{1-q};q),\\&
 \tilde{D}_{n,\nu}(x) :=\frac{q^{-n}[n]_q-q^{-n}[\nu]_q}{x}\cos(\frac{q^{\frac{-1}{2}(n+\nu+2)}x}{1-q};q) -\frac{q^{\frac{-1}{2}(n+\nu+1)}}{1-q}\sin(\frac{q^{\frac{-1}{2}(n+\nu+1)}x}{1-q};q).
  \end{align*}
  Then
\begin{align}\label{2458744}
&\int  \left(  [2n+1]_q  x^n\cos(\frac{q^{\frac{-1}{2}(n+\nu+1)}x}{1-q};q)+x^{n-1} \tilde{C}_{n,\nu}  \sin(\frac{q^{\frac{-1}{2}(n+\nu)}x}{1-q};q)  \right)  J_\nu^{(3)}( x;q^2) d_qx \nonumber \\&=  q^{\frac{3n+\nu+2}{2}}(1-q)x^{n+1} D_{n,\nu}(x) J_\nu^{(3)}( \frac{x}{q};q^2)+q^{\frac{n+\nu+2}{2}}x^{n+1} \sin(\frac{q^{\frac{-1}{2}(n+\nu+2)}x}{1-q};q) J_{\nu+1}^{(3)}( x;q^2),
\end{align}
and
\begin{align}\label{245870}
&\int  \left(x^{n-1}  \hat{C}_{n,\nu} \cos(\frac{q^{\frac{-1}{2}(n+\nu)}x}{1-q};q) -  [2n+1]_q  x^n \sin(\frac{q^{\frac{-1}{2}(n+\nu+1)}x}{1-q};q) \right)  J_\nu^{(3)}( x;q^2) d_qx \nonumber \\&=q^{\frac{3n+\nu+1}{2}}(1-q) x^{n+1} \tilde{D}_{n,\nu}(x)  J_\nu^{(3)}( \frac{x}{q};q^2)+q^{\frac{n+\nu+1}{2}} x^{n+1}\cos(\frac{q^{\frac{-1}{2}(n+\nu+2)}x}{1-q};q) J_{\nu+1}^{(3)}( x;q^2),
\end{align}
where $A_{n,\nu}= q^{-n}[n]^2_q-q^{-\nu}[\nu]^2_q$.
\end{theorem}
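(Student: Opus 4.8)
The plan is to specialize the master identity (\ref{maj}) for the third Jackson $q$-Bessel function and feed it two carefully chosen functions $h$. Comparing the coefficient of $J_{\nu+1}^{(3)}(x;q^2)$ on the right-hand side of each asserted formula with the general right-hand side of (\ref{maj}) — after eliminating $D_{q^{-1}}J_\nu^{(3)}$ by means of (\ref{4215f}) — fixes the choice uniquely. For (\ref{2458744}) I would take
\[
h(x) = (1-q)\,q^{\frac{3n+\nu+2}{2}}\,x^{n}\,\sin\!\Big(\tfrac{q^{-\frac12(n+\nu)}x}{1-q};q\Big),
\]
and for (\ref{245870}) the cosine analog
\[
h(x) = (1-q)\,q^{\frac{3n+\nu+1}{2}}\,x^{n}\,\cos\!\Big(\tfrac{q^{-\frac12(n+\nu)}x}{1-q};q\Big).
\]
Both integrals then follow from a single computation carried out twice, the second being the $\sin\leftrightarrow\cos$ mirror of the first.

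The substantive step is to evaluate $x\,(L_{q,\nu}^{(3)}h)(x)$ and show it equals the integrand on the left of (\ref{2458744}) (resp. (\ref{245870})). I would compute $D_q h$ and then $D_{q^{-1}}D_q h$ and $D_{q^{-1}}h$ by the $q$-Leibniz rule $D_q(uv)(x)=u(x)D_qv(x)+v(qx)D_qu(x)$ applied to the product of the power $x^n$ and the $q$-trigonometric factor. The power $x^n$ contributes $q$-number factors $[n]_q$, $[n\pm 1]_q$ together with argument shifts $x\mapsto x/q$, while the $q$-difference rules $D_q\sin(z;q)=\cos(q^{1/2}z;q)$ and $D_q\cos(z;q)=-q^{1/2}\sin(q^{1/2}z;q)$ both rescale the argument by $q^{\pm1/2}$ and interchange $\sin$ and $\cos$. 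Tracking these rescalings is exactly what produces the three distinct arguments $q^{-\frac12(n+\nu)}x/(1-q)$, $q^{-\frac12(n+\nu+1)}x/(1-q)$, $q^{-\frac12(n+\nu+2)}x/(1-q)$ appearing in the statement. The key point is that the $\frac{1}{q}D_{q^{-1}}D_qh$ and $r(x)h$ contributions carrying the ``wrong'' argument must cancel against one another so that only the clean combination $[2n+1]_q x^{n}\cos(\cdots)+x^{n-1}\tilde{C}_{n,\nu}\sin(\cdots)$ survives; this is the $q$-analog of the Bessel equation doing its work, and $A_{n,\nu}=q^{-n}[n]_q^2-q^{-\nu}[\nu]_q^2$ is the residue of that cancellation.

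For the right-hand side I would start from (\ref{maj}), replace $D_{q^{-1}}J_\nu^{(3)}(x;q^2)$ using (\ref{4215f}), and collect the coefficients of $J_\nu^{(3)}(x/q;q^2)$ and of $J_{\nu+1}^{(3)}(x;q^2)$ separately. The $J_{\nu+1}^{(3)}$ coefficient is simply $\frac{x}{1-q}h(x/q)$, which by the choice of $h$ reproduces the last term of each formula. The $J_\nu^{(3)}(x/q;q^2)$ coefficient is $\frac{x}{q}D_{q^{-1}}h(x)-[\nu]_q h(x/q)$; evaluating $D_{q^{-1}}h$ by the identity $D_{q^{-1}}f(x)=(D_qf)(x/q)$ and the same Leibniz bookkeeping yields a linear combination of $\sin$ and $\cos$ which, once the constants are reorganized, is precisely $D_{n,\nu}(x)$ (resp. $\tilde{D}_{n,\nu}(x)$) times the stated prefactor.

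I expect the main obstacle to be the double application of the $q$-Leibniz rule together with the argument-scaling bookkeeping: one must keep the $q^{\pm1/2}$ shifts of the trigonometric arguments aligned with the $q$-power shifts of $x^n$, and then verify that the second-order and zeroth-order contributions recombine with the constant $A_{n,\nu}$ as claimed. No single step is deep, but the computation is lengthy and error-prone, so I would organize it by first recording $L_{q,\nu}^{(3)}(x^n\sin(\lambda x;q))$ and $L_{q,\nu}^{(3)}(x^n\cos(\lambda x;q))$ for a generic scale $\lambda$ as auxiliary formulas, and only afterward insert the specific value $\lambda=q^{-\frac12(n+\nu)}/(1-q)$.
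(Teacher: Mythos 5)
Your proposal is correct and follows essentially the same route as the paper: the paper also specializes (\ref{maj}) (with $f(x)=x$) to $h(x)=x^n\sin\!\big(\tfrac{q^{-\frac12(n+\nu)}x}{1-q};q\big)$ and $h(x)=x^n\cos\!\big(\tfrac{q^{-\frac12(n+\nu)}x}{1-q};q\big)$, then eliminates $D_{q^{-1}}J_\nu^{(3)}$ via (\ref{4215f}). The only difference is that you absorb the constant $(1-q)q^{\frac{3n+\nu+2}{2}}$ (resp.\ $(1-q)q^{\frac{3n+\nu+1}{2}}$) into $h$ while the paper leaves this rescaling implicit; since (\ref{maj}) is linear in $h$, the two are trivially equivalent.
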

\begin{proof}
From Theorem {$\ref{thmn1}$}, we get
 $f(x) = x$ is a solution of Equation (${\ref{fht}}$).
 Substituting with $ h(x)= x^n \sin(\frac{q^{\frac{-1}{2}(n+\nu)}x}{1-q};q)$ into Equation (${\ref{maj}}$),
we get
\begin{align*}
&\int \left(  [2n+1]_q  x^n\cos(\frac{q^{\frac{-1}{2}(n+\nu+1)}x}{1-q};q)+x^{n-1} \tilde{C}_{n,\nu} \sin(\frac{q^{\frac{-1}{2}(n+\nu)}x}{1-q};q)  \right)  J_\nu^{(3)}( x;q^2) d_qx \nonumber \\&= \left( \frac{q^{\frac{n+\nu+2}{2}}(1-q^n)}{x}\sin(\frac{q^{\frac{-1}{2}(n+\nu+2)}x}{1-q};q) +q^n\cos(\frac{q^{\frac{-1}{2}(n+\nu+1)}x}{1-q};q)\right)x^{n+1}J_\nu^{(3)}( \frac{x}{q};q^2)\\&-q^{\frac{n+\nu}{2}}(1-q) x^{n+1} \sin(\frac{q^{\frac{-1}{2}(n+\nu+2)}x}{1-q};q) D_{q^{-1}}J_\nu^{(3)}(  x;q^2).
\end{align*}
Then, from (${\ref{4215f}}$)
we obtain (${\ref{2458744}}$).
Similarly, Equation (${\ref{245870}}$) follows by
substituting with $ h(x)= x^n \cos(\frac{q^{\frac{-1}{2}(n+\nu)}x}{1-q};q)$ into Equation (${\ref{maj}}$).
\end{proof}
\begin{corollary}
For $\Re  (\nu) > -1 $,
  \begin{align*}
 &\int  \left(  \cos(\frac{q^{\frac{-1}{2}(\nu+1)}x}{1-q};q)+x^{-1}\tilde{C}_{0,\nu}  \sin(\frac{q^{\frac{-1}{2}\nu}x}{1-q};q)  \right)  J_\nu^{(3)}( x;q^2) d_qx \nonumber \\&= q^{\frac{\nu+2}{2}}(1-q) x D_{0,\nu}(x) J_\nu^{(3)}( \frac{x}{q};q^2)+q^{\frac{\nu+2}{2}}x \sin(\frac{q^{\frac{-1}{2}(\nu+2)}x}{1-q};q) J_{\nu+1}^{(3)}( x;q^2),
\end{align*}
and
\begin{align*}
&\int  \left(x^{-1}  \hat{C}_{0,\nu}  \cos(\frac{q^{\frac{-1}{2}\nu}x}{1-q};q) - \sin(\frac{q^{\frac{-1}{2}(\nu+1)}x}{1-q};q) \right)  J_\nu^{(3)}( x;q^2) d_qx \nonumber \\&= q^{\frac{\nu+1}{2}}(1-q) x \tilde{D}_{0,\nu}(x)  J_\nu^{(3)}( \frac{x}{q};q^2)+ q^{\frac{\nu+1}{2}} x \cos(\frac{q^{\frac{-1}{2}(\nu+2)}x}{1-q};q) J_{\nu+1}^{(3)}( x;q^2),
\end{align*}
where  $D_{0,\nu}(x)$, $\tilde{D}_{0,\nu}(x)$, $\tilde{C}_{0,\nu}$ and $\hat{C}_{0,\nu}$ are defined as  in Theorem {$\ref{thmnbs}$} with $(n=0)$.
\end{corollary}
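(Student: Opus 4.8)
The plan is to obtain this corollary directly as the $n=0$ specialization of Theorem~\ref{thmnbs}. Since both displayed identities (\ref{2458744}) and (\ref{245870}) hold for every complex $n$ (subject only to $\Re(\nu)>-1$), it suffices to set $n=0$ throughout and simplify. First I would record the elementary reductions that occur at $n=0$: the $q$-natural number $[2n+1]_q$ collapses to $[1]_q=\frac{1-q}{1-q}=1$, so the coefficient multiplying the pure cosine term in (\ref{2458744}) (respectively the pure sine term in (\ref{245870})) simply disappears; and the powers of $x$ reduce as $x^n=1$, $x^{n-1}=x^{-1}$, and $x^{n+1}=x$.

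Next I would simplify the exponents of $q$ occurring both as prefactors and inside the arguments of the $q$-trigonometric functions. At $n=0$ one checks that $\frac{3n+\nu+2}{2}=\frac{\nu+2}{2}$, $\frac{n+\nu+2}{2}=\frac{\nu+2}{2}$, $\frac{3n+\nu+1}{2}=\frac{\nu+1}{2}$, and $\frac{n+\nu+1}{2}=\frac{\nu+1}{2}$, while the half-integer exponents inside $\sin(\cdot;q)$ and $\cos(\cdot;q)$ reduce from $-\frac{1}{2}(n+\nu+\epsilon)$ to $-\frac{1}{2}(\nu+\epsilon)$ for $\epsilon\in\{0,1,2\}$. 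The quantities $\tilde{C}_{0,\nu}$, $\hat{C}_{0,\nu}$, $D_{0,\nu}(x)$, and $\tilde{D}_{0,\nu}(x)$ appearing in the corollary are, by convention, exactly the expressions defined in Theorem~\ref{thmnbs} evaluated at $n=0$, so no further rewriting of those symbols is needed; they are carried along unchanged.

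There is no genuine obstacle here, as the argument is pure substitution. The only point requiring care is the bookkeeping of the several $q$-power prefactors, where one must confirm that the two factors $q^{\frac{3n+\nu+2}{2}}(1-q)$ and $q^{\frac{n+\nu+2}{2}}$ in (\ref{2458744}) collapse to $q^{\frac{\nu+2}{2}}(1-q)$ and $q^{\frac{\nu+2}{2}}$, and that $q^{\frac{3n+\nu+1}{2}}(1-q)$ and $q^{\frac{n+\nu+1}{2}}$ in (\ref{245870}) collapse to $q^{\frac{\nu+1}{2}}(1-q)$ and $q^{\frac{\nu+1}{2}}$. Carrying out these substitutions term by term in (\ref{2458744}) and (\ref{245870}) yields precisely the two displayed identities of the corollary, which completes the proof.
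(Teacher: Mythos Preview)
Your proposal is correct and follows exactly the same approach as the paper, which simply states that the proof follows by setting $n=0$ in Theorem~\ref{thmnbs}. Your additional bookkeeping of the $q$-power prefactors and $x$-powers is accurate and merely spells out explicitly what the paper leaves implicit.
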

\begin{proof}
  The proof follows by setting $n=0$ in Theorem {$\ref{thmnbs}$}.
\end{proof}
\begin{corollary}
For $\Re  (\nu) > -1 $,
  \begin{align*}
&\int x^\nu   \cos(\frac{q^{\frac{-1}{2}(2\nu+1)}x}{1-q};q) J_\nu^{(3)}( x;q^2) d_qx \\& =\frac{x^{\nu+1}}{[2\nu+1]_q}\left( q^{\nu} \cos(\frac{q^{\frac{-1}{2}(2\nu+1)}x}{1-q};q)J_\nu^{(3)}(\frac{x}{q};q^2)+q^{\nu+1}\sin(\frac{q^{-(\nu+1)}x}{1-q};q)J_{\nu+1}^{(3)}( x;q^2)\right),
\end{align*}
and
\begin{align*}
&\int   x^\nu \sin(\frac{q^{\frac{-1}{2}(2\nu+1)}x}{1-q};q) J_\nu^{(3)}( x;q^2) d_qx \\&=\frac{x^{\nu+1}}{[2\nu+1]_q}\left( q^{\nu} \sin(\frac{q^{\frac{-1}{2}(2\nu+1)}x}{1-q};q)J_\nu^{(3)}( \frac{x}{q};q^2)-q^{\nu+\frac{1}{2}}\cos(\frac{q^{-(\nu+1)}x}{1-q};q)J_{\nu+1}^{(3)}( x;q^2)\right).
\end{align*}
\end{corollary}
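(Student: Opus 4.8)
The plan is to derive both identities as the single specialization $n=\nu$ of Theorem~\ref{thmnbs}; no new integration by parts or choice of $h$ is required. First I would substitute $n=\nu$ into $A_{n,\nu}=q^{-n}[n]_q^2-q^{-\nu}[\nu]_q^2$ and record the decisive cancellation $A_{\nu,\nu}=0$. Since both $\tilde{C}_{n,\nu}$ and $\hat{C}_{n,\nu}$ are proportional to $A_{n,\nu}$, they vanish at $n=\nu$, so the $x^{n-1}$ terms drop out of the left-hand sides of $(\ref{2458744})$ and $(\ref{245870})$, leaving on each left side a single integral $[2\nu+1]_q\int x^{\nu}\cos(\cdots;q)J_\nu^{(3)}(x;q^2)\,d_qx$ (resp.\ with $\sin$), where the argument $\frac{q^{-\frac12(n+\nu+1)}x}{1-q}$ has become $\frac{q^{-\frac12(2\nu+1)}x}{1-q}$.

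Next I would simplify the coefficient functions on the right. The common factor $q^{-n}[n]_q-q^{-n}[\nu]_q$ appearing in $D_{n,\nu}$ and $\tilde{D}_{n,\nu}$ also vanishes at $n=\nu$, so their first summands disappear and one is left with $D_{\nu,\nu}(x)=\frac{q^{-(\nu+1)}}{1-q}\cos(\frac{q^{-\frac12(2\nu+1)}x}{1-q};q)$ and $\tilde{D}_{\nu,\nu}(x)=-\frac{q^{-(\nu+\frac12)}}{1-q}\sin(\frac{q^{-\frac12(2\nu+1)}x}{1-q};q)$. Feeding these into the right-hand sides, the prefactor $q^{\frac{3n+\nu+2}{2}}(1-q)$ times $\frac{q^{-(\nu+1)}}{1-q}$ telescopes to $q^{\nu}$ for $(\ref{2458744})$, while $q^{\frac{3n+\nu+1}{2}}(1-q)$ times $\frac{q^{-(\nu+\frac12)}}{1-q}$ telescopes to $q^{\nu}$ for $(\ref{245870})$; the remaining prefactors $q^{\frac{n+\nu+2}{2}}$ and $q^{\frac{n+\nu+1}{2}}$ become $q^{\nu+1}$ and $q^{\nu+\frac12}$, and the argument $\frac{q^{-\frac12(n+\nu+2)}x}{1-q}$ becomes $\frac{q^{-(\nu+1)}x}{1-q}$.

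Finally I would divide each identity by $[2\nu+1]_q$, carrying the overall minus sign on the left of $(\ref{245870})$ over to the right so as to produce the negative $-q^{\nu+\frac12}\cos(\frac{q^{-(\nu+1)}x}{1-q};q)J_{\nu+1}^{(3)}(x;q^2)$ term in the second formula; this yields exactly the two displayed equalities. I expect no conceptual obstacle: the work is entirely bookkeeping, and the one place to be careful is confirming that the several half-integer exponents of $q$ in the prefactors, in $D_{\nu,\nu}$, $\tilde{D}_{\nu,\nu}$, and in the trigonometric arguments all collapse to the clean exponents $q^\nu$, $q^{\nu+1}$, $q^{\nu+\frac12}$ and $q^{-(\nu+1)}$ shown in the corollary, e.g.\ $\frac{3\nu+\nu+2}{2}-(\nu+1)=\nu$ and $\frac{3\nu+\nu+1}{2}-(\nu+\frac12)=\nu$.
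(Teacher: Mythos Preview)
Your proposal is correct and follows exactly the paper's approach, which is simply to set $n=\nu$ in Theorem~\ref{thmnbs}. You have in fact supplied more detail than the paper, carefully verifying that $A_{\nu,\nu}=0$ kills the $\tilde C$, $\hat C$ terms and the first summands of $D_{n,\nu}$, $\tilde D_{n,\nu}$, and that the remaining $q$-exponents collapse as required.
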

\begin{proof}
  The proof follows by letting $n=\nu$ in Theorem {$\ref{thmnbs}$}.
\end{proof}
\begin{theorem}\label{thmn2}
For $\Re  (\nu )> -1 $, $\lambda \in \mathbb C $ and $\Re  (m+\nu) >0$, we get the $q$-integral
\begin{align}\label{dodoo}
 &\int \dfrac{x^{m+1}}{(- x^2  \lambda^2(1-q)^2;q^2)_\infty} \left(q^m \lambda^2+ A_{m,\nu} x^{-2}\right) J_\nu^{(2)}( \lambda x |q^2) d_qx = \nonumber\\& \frac{x^{m+1}}{(- x^2  \lambda^2(1-q)^2;q^2)_\infty} \left(\frac{ q^{-m}[m]_q  J_\nu^{(2)}( \lambda x |q^2)}{x}-\frac{1}{q} D_{q^{-1}}  J_\nu^{(2)}( \lambda x |q^2) \right),
\end{align}
or equivalently,
\begin{align}\label{joj}
 &\int \dfrac{x^{m+1}}{(- x^2  \lambda^2(1-q)^2;q^2)_\infty} \left(q^m \lambda^2+A_{m,\nu} x^{-2}\right) J_\nu^{(2)}( \lambda x |q^2) d_qx = \nonumber\\& \frac{x^{m+1}}{(- x^2  \lambda^2(1-q)^2;q^2)_\infty} \left(\frac{ q^{-m}[m]_q-q^{-\nu}[\nu]_q  }{x}J_\nu^{(2)}( \lambda x |q^2)+q^{ \nu } \lambda  J_{\nu+1}^{(2)}( \lambda x |q^2) \right).
\end{align}
In particular,
\begin{align}\label{smsh}
 &\int \dfrac{x^{\nu+1}}{(- x^2  \lambda^2(1-q)^2;q^2)_\infty} J_\nu^{(2)}( \lambda x |q^2) d_qx = \frac{x^{\nu+1}}{\lambda (- x^2  \lambda^2(1-q)^2;q^2)_\infty}  J_{\nu+1}^{(2)}( \lambda x |q^2),
\end{align}
and
\begin{align}\label{smsmn}
 &\int \dfrac{x^{1-\nu}}{(- x^2  \lambda^2(1-q)^2;q^2)_\infty} J_\nu^{(2)}( \lambda x |q^2) d_qx =  \frac{- x^{1-\nu}}{\lambda (- x^2  \lambda^2(1-q)^2;q^2)_\infty} J_{\nu-1}^{(2)}( \lambda x |q^2),
\end{align}
where $A_{m,\nu}$ is defined as in Theorem {$\ref{thmnbs}$}.
\end{theorem}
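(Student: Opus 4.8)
The plan is to follow the template of Theorem~\ref{thmn1} verbatim, with the third Jackson $q$-Bessel function replaced by the second. First I would write down the homogeneous second-order $q$-difference equation of the form~(\ref{mxlk}) satisfied by $y(x)=J_\nu^{(2)}(\lambda x|q^2)$, and compare it with~(\ref{mxlk}) to read off the coefficients $p(x)$ and $r(x)$. In contrast with the $J^{(3)}$ case, where $p(x)=\frac{1}{qx}$ produced $f(x)=x$, here the extra Gaussian-type factor $q^{n(n+\nu)}$ in the defining series of $J_\nu^{(2)}$ makes $p(x)$ a genuine rational function that reduces to $\frac{1}{qx}$ only in the $\lambda\to0$ limit; the coefficient $r(x)$ retains the $-q^{-\nu}[\nu]_q^2/x^2$ term (responsible, together with the second $q$-difference of $h$, for $A_{m,\nu}=q^{-m}[m]_q^2-q^{-\nu}[\nu]_q^2$) and carries a $\lambda^2$-dependent piece producing the $q^m\lambda^2$ term in~(\ref{dodoo}).

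Next I would solve the first-order equation~(\ref{fht}), $\frac{1}{q}D_{q^{-1}}f(x)=p(x)f(x)$, and claim that
\[
f(x)=\frac{x}{(-x^2\lambda^2(1-q)^2;q^2)_\infty}.
\]
The verification rests on the product-shift identity $(-\alpha(x/q)^2;q^2)_\infty=(1+\alpha x^2q^{-2})(-\alpha x^2;q^2)_\infty$ with $\alpha=\lambda^2(1-q)^2$, which collapses $D_{q^{-1}}f$ to an elementary rational multiple of $f$ matching $p(x)$. This is the step I expect to be the main obstacle: one must pin down $p(x)$ precisely so that the telescoping of the infinite product leaves exactly $\frac{x}{(-x^2\lambda^2(1-q)^2;q^2)_\infty}$ as the common factor on both sides of~(\ref{mkl}).

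With $f$ determined I would specialize~(\ref{mkl}) to $J_\nu^{(2)}$, exactly as~(\ref{maj}) was obtained for $J_\nu^{(3)}$, and substitute $h(x)=x^m$. Using $D_{q^{-1}}x^m=q^{1-m}[m]_q x^{m-1}$ and $\frac{1}{q}D_{q^{-1}}D_q x^m=q^{1-m}[m]_q[m-1]_q x^{m-2}$, the identity $q[m-1]_q+1=[m]_q$ merges the second-difference term with the $\frac{1}{qx}$-part of $p(x)D_{q^{-1}}x^m$ into $q^{-m}[m]_q^2 x^{m-2}$, and combining with $r(x)$ produces the integrand and right-hand side of~(\ref{dodoo}). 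The equivalent form~(\ref{joj}) then follows by inserting the lowering relation for $D_{q^{-1}}J_\nu^{(2)}(\lambda x|q^2)$ (the $J^{(2)}$ analog of~(\ref{4215f})), which expresses it through $J_\nu^{(2)}(\lambda x/q|q^2)$ and $J_{\nu+1}^{(2)}(\lambda x|q^2)$.

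Finally, the two special cases are read off from~(\ref{joj}). Setting $m=\nu$ annihilates the coefficient $q^{-m}[m]_q-q^{-\nu}[\nu]_q$ and leaves~(\ref{smsh}). Setting $m=-\nu$ and then applying the three-term contiguous relation linking $J_{\nu+1}^{(2)}$, $J_\nu^{(2)}$, and $J_{\nu-1}^{(2)}$ (the $J^{(2)}$ counterpart of the relation invoked after~(\ref{bgnmjjsjs})) converts the remaining combination into $J_{\nu-1}^{(2)}(\lambda x/q|q^2)$, yielding~(\ref{smsmn}) and completing the proof.
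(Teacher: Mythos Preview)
Your overall strategy is right, but you are invoking the wrong abstract theorem. The $q$-difference equation satisfied by $J_\nu^{(2)}(\lambda x\,|\,q^2)$ (equation~(\ref{mnbvxx}) in the paper) is of the form~(\ref{mx}), with a $p(x)D_q y$ term rather than $p(x)D_{q^{-1}}y$: explicitly $p(x)=\dfrac{1-q\lambda^2 x^2(1-q)}{x}$ and $r(x)=\dfrac{q\lambda^2 x^2-q^{1-\nu}[\nu]_q^2}{x^2}$. Consequently the paper applies Theorem~\ref{brevc}, not Theorem~\ref{bvc}: the integrating factor $f$ solves~(\ref{frht5}), $D_q f=pf$, not~(\ref{fht}), and the master identity is~(\ref{mkrl}), whose right side is $f(x)\bigl(y(x)D_{q^{-1}}h(x)-h(x)D_{q^{-1}}y(x)\bigr)$ with \emph{undilated} arguments. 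This is exactly what~(\ref{dodoo}) exhibits---note that $J_\nu^{(2)}(\lambda x\,|\,q^2)$ and $f(x)$ appear, not $J_\nu^{(2)}(\lambda x/q\,|\,q^2)$ and $f(x/q)$. If you carried out your plan literally via~(\ref{mkl}) you would land on a formula with arguments shifted to $x/q$, which is not the stated result.

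A related slip: the lowering relation needed is~(\ref{equ25}),
\[
D_{q^{-1}}J_\nu^{(2)}(\lambda x\,|\,q^2)=\frac{q^{1-\nu}[\nu]_q}{x}J_\nu^{(2)}(\lambda x\,|\,q^2)-\lambda q^{1+\nu}J_{\nu+1}^{(2)}(\lambda x\,|\,q^2),
\]
which keeps the argument at $\lambda x$ (unlike the $J^{(3)}$ relation~(\ref{4215f}), which shifts to $x/q$). Likewise the three-term relation~(\ref{254fdff}) used for~(\ref{smsmn}) keeps the argument at $\lambda x$, so the final answer involves $J_{\nu-1}^{(2)}(\lambda x\,|\,q^2)$, not at $\lambda x/q$. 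With these corrections your plan coincides with the paper's proof.
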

\begin{proof}
The second Jackson $q$-Bessel function  $ J_\nu^{(2)}( \lambda x |q^2)$  satisfies the second-order $q$-difference equation \cite{ra}
\begin{align}\label{mnbvxx}
&\frac{1}{q} D_{q^{-1}}D_q y(x) + \dfrac{1-q \lambda^2 x^2 (1-q)}{x} D_{q} y(x) +\dfrac{q  \lambda ^{2} x^2  - q^{1-v} [v]^2_q }{x^2} y(x)=0.
\end{align}
  By comparing  Equation (${\ref{mnbvxx}}$) with Equation (${\ref{mx}}$),  we obtain
\begin{align*}
  p(x) = \dfrac{1-q \lambda^2 x^2 (1-q)}{x},\quad  r(x) =  \dfrac{q  \lambda ^{2} x^2  - q^{1-v} [v]^2_q }{x^2}.
\end{align*}
Thus $  f(x) = \dfrac{x}{(- x^2  \lambda^2(1-q)^2;q^2)_\infty} $
 is a solution of Equation (${\ref{frht5}}$).
Also, Equation (${\ref{mkrl}}$) associated with the second Jackson $q$-Bessel function will be
\begin{align}\label{monh}
  & \int \dfrac{x}{(- x^2  \lambda^2(1-q)^2;q^2)_\infty} (L_{\nu,q}^{(2)}h)(x) J_\nu^{(2)}( \lambda x |q^2) d_qx = \nonumber \\&
   \dfrac{x}{(- x^2  \lambda^2(1-q)^2;q^2)_\infty}\Big (J_\nu^{(2)}( \lambda x |q^2) D_{q^{-1}}h(x) - h(x) D_{q^{-1}}J_\nu^{(2)}( \lambda x |q^2) \Big),
\end{align}
where
\begin{align*}
  (L_{\nu,q}^{(2)}h)(x) =  \frac{1}{q} D_{q^{-1}} D_qh(x)+   \dfrac{1-q \lambda^2 x^2 (1-q)}{x} D_{q} h(x) + \dfrac{q  \lambda ^{2} x^2  - q^{1-v} [v]^2_q }{x^2} h(x).
\end{align*}
Substituting with $ h(x)=x^m$
into Equation (${\ref{monh}}$), we get (${\ref{dodoo}}$).
The proof of (${\ref{joj}}$) follows by
using  \cite[Eq.(2.1)]{ra} to obtain
\begin{align}\label{equ25}
  D_{q^{-1}} J_\nu^{(2)}( \lambda x |q^2)= \frac{q^{1-\nu}[\nu]_q}{x} J_\nu^{(2)}( \lambda x |q^2) - \lambda  q^{1+\nu}J_{\nu+1}^{(2)}( \lambda x |q^2),
\end{align}
and by substituting in  Equation (${\ref{dodoo}}$).
Substituting  with $\nu = m $ into Equation (${\ref{joj}}$), yields (${\ref{smsh}}$).
Substituting  with $m=-\nu$,  in Equation (${\ref{joj}}$) and using
\cite[Eq.(1.25)]{rahman} (with $x$ is replaced by $2x \lambda(1-q) $ and $q$ by $q^2$ )
\begin{align}\label{254fdff}
  q^{2\nu} J_{\nu+1}^{(2)}( \lambda x |q^2) = \frac{[2\nu]_q}{\lambda x}J_\nu^{(2)}( \lambda x |q^2) -J_{\nu-1}^{(2)}( \lambda x |q^2),
\end{align}
we get (${\ref{smsmn}}$) and completes the proof.
\end{proof}
\begin{example}
  For $\nu=0$ and $ m = 1$,  Equation (${\ref{joj}}$) will be
\begin{align*}
  &\int \dfrac{\lambda^2 q^2 x^2+1}{(- x^2  \lambda^2(1-q)^2;q^2)_\infty} J_0^{(2)}( \lambda x |q^2) d_qx = \frac{x J_0^{(2)}( \lambda x |q^2)}{(- x^2  \lambda^2(1-q)^2;q^2)_\infty}+  \frac{q \lambda x^2 J_{1}^{(2)}( \lambda x |q^2)}{(- x^2  \lambda^2(1-q)^2;q^2)_\infty}.
\end{align*}
Moreover, if $\lambda$ is a zero of $J_0^{(2)}( x |q^2)$, then
\begin{align*}
  &\int_{0}^{1} \dfrac{\lambda^2 q^2 x^2+1}{(- x^2  \lambda^2(1-q)^2;q^2)_\infty} J_0^{(2)}(  \lambda x |q^2) d_qx = \frac{q \lambda J_{1}^{(2)}( \lambda |q^2)}{(-  \lambda^2(1-q)^2;q^2)_\infty}.
\end{align*}
\end{example}
\begin{theorem}\label{2514b4}
Let $\nu$ and $n$  be a complex number with $\Re  (\nu ) > -1 $.  Let
 \begin{align*}
 &  C_{n,\nu} := q^{\frac{1}{2}} [n+\nu]_q+q^{-\frac{1}{2}} [n-\nu+1]_q,\\&
 I_{n,\nu}(x) := \frac{q^{-n}[n]_q-q^{-\nu}[\nu]_q}{x}\S_q {(q^{n+\frac{1}{2}}x)} +q^{-\frac{1}{2}}\c_q(q^{n+\frac{1}{2}}x),\\&
 \tilde{I}_{n,\nu}(x) := \frac{q^{-n}[n]_q-q^{-\nu}[\nu]_q}{x}\c_q (q^{n+\frac{1}{2}}x) -q^{-\frac{1}{2}}\S_q(q^{n+\frac{1}{2}}x) .
 \end{align*}
 Then
  \begin{align}\label{fdscss}
   &\int\frac{1}{(- x^2  (1-q)^2;q^2)_\infty} \Big(x^{n-1} A_{n,\nu}    \S_q(q^{n+\frac{3}{2}}x)  +x^n C_{n,\nu}(x)  \c_q(q^{n+\frac{3}{2}}x) \Big)  J_\nu^{(2)}( x |q^2) d_qx \nonumber\\&=\frac{x^{n+1}}{(- x^2  (1-q)^2;q^2)_\infty}\Big( I_{n,\nu}(x) J_\nu^{(2)}(  x |q^2)+q^{\nu}  \S_q (q^{n+\frac{1}{2}}x) J_{\nu+1}^{(2)}(  x |q^2)\Big),
  \end{align}
  and
  \begin{align}\label{fdscshnj}
   &\int\frac{1}{(- x^2  (1-q)^2;q^2)_\infty} \Big(x^{n-1}  A_{n,\nu}     \c_q(q^{n+\frac{3}{2}}x)  - x^n C_{n,\nu}(x)  \S_q(q^{n+\frac{3}{2}}x)  \Big)  J_\nu^{(2)}(  x |q^2) d_qx \nonumber \\&=\frac{x^{n+1}}{(- x^2  (1-q)^2;q^2)_\infty}\Big( \tilde{I}_{n,\nu}(x) J_\nu^{(2)}(  x |q^2)+q^{\nu}  \c_q (q^{n+\frac{1}{2}}x) J_{\nu+1}^{(2)}(  x |q^2)\Big),
  \end{align}
  where $A_{n,\nu}$ is defined as in Theorem {$\ref{thmnbs}$}.
\end{theorem}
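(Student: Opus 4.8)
The plan is to read Theorem~\ref{2514b4} as the second-Jackson-function counterpart of Theorem~\ref{thmnbs}, so that the engine is Theorem~\ref{thmn2} (an instance of Theorem~\ref{brevc}) specialized to $\lambda=1$. Putting $\lambda=1$ in Theorem~\ref{thmn2} gives $f(x)=x/(-x^2(1-q)^2;q^2)_\infty$ as a solution of~(\ref{frht5}) and supplies the master identity~(\ref{monh}) with its operator $L_{\nu,q}^{(2)}$. Each displayed $q$-integral will then come from inserting one explicit $h$ into~(\ref{monh}): for~(\ref{fdscss}) I would take $h(x)=x^n\,\S_q(q^{n+\frac12}x)$ (up to the overall constant $q^{-1}$ that normalizes the coefficients), and for~(\ref{fdscshnj}) the companion $h(x)=x^n\,\c_q(q^{n+\frac12}x)$.

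With $h$ fixed the work splits in two. On the left I would evaluate $(L_{\nu,q}^{(2)}h)(x)$ directly, using the $q$-product rule, the power rule $D_{q^{-1}}x^n=q^{1-n}[n]_q x^{n-1}$, and the $q$-trigonometric rules $D_q\S_q z=\c_q(qz)$, $D_q\c_q z=-\S_q(qz)$ (equivalently $D_{q^{-1}}\S_q(ax)=a\,\c_q(ax)$ and $D_{q^{-1}}\c_q(ax)=-a\,\S_q(ax)$); this reproduces the integrand on the left of~(\ref{fdscss}), whose coefficients are precisely $A_{n,\nu}$ and $C_{n,\nu}$. On the right, (\ref{monh}) returns the $q$-Wronskian $f(x)\bigl(y\,D_{q^{-1}}h-h\,D_{q^{-1}}y\bigr)$; there I would insert the derivative formula~(\ref{equ25}) for $D_{q^{-1}}J_\nu^{(2)}$ to separate a $J_\nu^{(2)}$ part, which assembles into $I_{n,\nu}(x)$ (resp. $\tilde I_{n,\nu}(x)$), from a $J_{\nu+1}^{(2)}$ part carrying the factor $q^\nu\,\S_q(q^{n+\frac12}x)$ (resp. $q^\nu\,\c_q(q^{n+\frac12}x)$). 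Because the two sides of~(\ref{monh}) are already equal by Theorem~\ref{thmn2}, the matching of the two closed forms is automatic once $h$ is chosen; the task is only to put each side in the stated shape.

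I expect the only real obstacle to be bookkeeping. Both $L_{\nu,q}^{(2)}$ and the Wronskian naturally produce $\S_q$ and $\c_q$ at three dilations, $q^{n-\frac12}x$, $q^{n+\frac12}x$, and $q^{n+\frac32}x$, and the statement carries only two of them; reconciling this is the crux. I would handle it with the argument-shift identities $\S_q(qz)=\S_q z-(1-q)z\,\c_q(qz)$ and $\c_q(qz)=\c_q z+(1-q)z\,\S_q(qz)$, which are merely the differentiation rules rewritten. Applying them lets the stray $q^{n-\frac12}x$ terms recombine, and the resulting factors $1-q^n=(1-q)[n]_q$ are exactly what turn the bare $q$-powers into $q^{-n}[n]_q-q^{-\nu}[\nu]_q$ and into $C_{n,\nu}=q^{\frac12}[n+\nu]_q+q^{-\frac12}[n-\nu+1]_q$. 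Following the powers of $q$ through this collapse, not any conceptual step, is where the care lies.

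Finally, (\ref{fdscshnj}) is strictly parallel to~(\ref{fdscss}) under the formal exchange $\S_q\leftrightarrow\c_q$ together with the sign in $D_q\c_q z=-\S_q(qz)$, so I would carry out~(\ref{fdscss}) in full and obtain~(\ref{fdscshnj}) by the same substitution with $h(x)=x^n\,\c_q(q^{n+\frac12}x)$.
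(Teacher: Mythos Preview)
Your proposal is correct and matches the paper's proof essentially step for step: the paper also specializes Theorem~\ref{thmn2} to $\lambda=1$, inserts $h(x)=x^n\,\S_q(q^{n+\frac12}x)$ (resp.\ $x^n\,\c_q(q^{n+\frac12}x)$) into~(\ref{monh}), applies~(\ref{equ25}) on the right, and uses the single shift identity $\S_q(q^{n+\frac12}x)=q^{n+\frac12}(1-q)x\,\c_q(q^{n+\frac32}x)+\S_q(q^{n+\frac32}x)$ to collapse the dilations. Your parenthetical about a $q^{-1}$ normalization is unnecessary---the paper uses $h$ exactly as you wrote it---but otherwise the plan coincides with the paper's argument.
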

\begin{proof}
From Theorem {\ref{thmn2}}, we get $  f(x) = \dfrac{x}{(- x^2  \lambda^2(1-q)^2;q^2)_\infty} $
 is a solution of Equation (${\ref{frht5}}$).
  Substitute  with $ h(x)=x^n \S_q (q^{n+\frac{1}{2}}x)$ and $\lambda=1$,
into Equation (${\ref{monh}}$). Using Equation (${\ref{equ25}}$) and  the identity
\begin{align*}
\S_q(q^{n+\frac{1}{2}}x) = q^{n+\frac{1}{2}}(1-q)x \c_q(q^{n+\frac{3}{2}}x)+\S_q(q^{n+\frac{3}{2}}x),
\end{align*}
yields (${\ref{fdscss}}$).
  Similarly, we get the $q$-integral (${\ref{fdscshnj}}$) by
substituting with $ h(x)= x^n \c_q(q^{n+\frac{1}{2}}x)$ into Equation (${\ref{monh}}$).
\end{proof}
\begin{corollary}
 For $\Re (\nu) > -1 $,
  \begin{align*}
   &\int\frac{1}{(- x^2  (1-q)^2;q^2)_\infty} \Big(x^{-1} A_{0,\nu}    \S_q(q^{\frac{3}{2}}x)  + C_{0,\nu}(x)  \c_q(q^{\frac{3}{2}}x) \Big)  J_\nu^{(2)}( x |q^2) d_qx \nonumber\\&=\frac{x}{(- x^2  (1-q)^2;q^2)_\infty}\Big( I_{0,\nu}(x) J_\nu^{(2)}(  x |q^2)+q^{\nu}  \S_q (q^{\frac{1}{2}}x) J_{\nu+1}^{(2)}(  x |q^2)\Big),
  \end{align*}
  and
  \begin{align*}
   &\int\frac{1}{(- x^2  (1-q)^2;q^2)_\infty} \Big(x^{-1}  A_{0,\nu}     \c_q(q^{\frac{3}{2}}x)  -C_{0,\nu}(x)  \S_q(q^{\frac{3}{2}}x)  \Big)  J_\nu^{(2)}(  x |q^2) d_qx \nonumber \\&=\frac{x}{(- x^2  (1-q)^2;q^2)_\infty}\Big( \tilde{I}_{0,\nu}(x) J_\nu^{(2)}(  x |q^2)+q^{\nu}  \c_q (q^{\frac{1}{2}}x) J_{\nu+1}^{(2)}(  x |q^2)\Big),
  \end{align*}
  where  $C_{0,\nu}(x)$, $I_{0,\nu}(x)$ and $\tilde{I}_{0,\nu}(x)$ are defined as in Theorem {$\ref{2514b4}$} with $n=0$ and
  $ A_{0,\nu}$ is defined as  in Theorem {$\ref{thmnbs}$} with $n=0$.
  \end{corollary}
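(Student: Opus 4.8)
The plan is to obtain the two identities as the direct specialization $n=0$ of Theorem~\ref{2514b4}; no new machinery is needed, so the work is entirely a matter of tracking how each quantity collapses when $n=0$. First I would write down equations~(\ref{fdscss}) and~(\ref{fdscshnj}) verbatim and set $n=0$ throughout. The powers of $x$ reduce cleanly: the factor $x^{n-1}$ becomes $x^{-1}$, $x^{n}$ becomes $1$, and the prefactor $x^{n+1}$ on the right becomes $x$, which already reproduces the shape of the claimed formulas.

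Next I would simplify the arguments of the $q$-trigonometric functions. Every occurrence of $q^{n+\frac32}x$ becomes $q^{\frac32}x$ and every occurrence of $q^{n+\frac12}x$ becomes $q^{\frac12}x$, so $\S_q(q^{n+\frac32}x)\to\S_q(q^{\frac32}x)$, $\c_q(q^{n+\frac32}x)\to\c_q(q^{\frac32}x)$, and likewise for the $q^{\frac12}x$ arguments appearing inside $I_{n,\nu}$, $\tilde{I}_{n,\nu}$ and the boundary terms. At the same time the auxiliary quantities specialize by definition: $A_{0,\nu}$, $C_{0,\nu}$, $I_{0,\nu}(x)$ and $\tilde{I}_{0,\nu}(x)$ are exactly the $n=0$ instances fixed in the statements of Theorem~\ref{thmnbs} and Theorem~\ref{2514b4}, so I would simply carry them along under their abbreviated names. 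Collecting these substitutions turns~(\ref{fdscss}) into the first asserted identity and~(\ref{fdscshnj}) into the second.

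There is no genuine obstacle here; the only point that warrants a moment's care is the bookkeeping around $[0]_q=0$. In the specialized integrand the $A_{0,\nu}$ term does not silently vanish, since $A_{0,\nu}=-q^{-\nu}[\nu]_q^2$ is nonzero for generic $\nu$, whereas the coefficient $q^{-n}[n]_q$ appearing inside $I_{n,\nu}$ and $\tilde{I}_{n,\nu}$ does drop out, leaving $-q^{-\nu}[\nu]_q/x$ as the surviving numerator. Verifying that these two effects are recorded correctly is the whole content of the proof, after which the result is immediate.
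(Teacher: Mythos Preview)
Your proposal is correct and matches the paper's approach exactly: the paper's proof consists of the single sentence that the result follows by setting $n=0$ in Theorem~\ref{2514b4}. Your additional bookkeeping about $[0]_q=0$ and the specialization of each auxiliary quantity is accurate and more detailed than what the paper provides.
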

    \begin{proof}
  The proof follows by setting $n=0$ in Theorem {\ref{2514b4}}.
\end{proof}
\begin{corollary}
For $\Re  (\nu) > -1 $,
  \begin{align*}
&\int\frac{x^\nu \c_q(q^{\nu+\frac{3}{2}}x)}{(- x^2  (1-q)^2;q^2)_\infty}      J_\nu^{(2)}(  x |q^2) d_qx \\&=\frac{x^{\nu+1}}{[2\nu+1]_q(- x^2  (1-q)^2;q^2)_\infty}\Big(  \c_q(q^{\nu+\frac{1}{2}}x) J_\nu^{(2)}( x |q^2)+q^{\frac{1}{2}+\nu} \S_q (q^{\nu+\frac{1}{2}}x) J_{\nu+1}^{(2)}(  x |q^2)\Big),
\end{align*}
and
\begin{align*}
&\int\frac{x^\nu \S_q(q^{\nu+\frac{3}{2}}x)}{(- x^2  (1-q)^2;q^2)_\infty} J_\nu^{(2)}(  x |q^2) d_qx \\&=\frac{x^{\nu+1}}{[2\nu+1]_q(- x^2  (1-q)^2;q^2)_\infty}\Big(\S_q(q^{\nu+\frac{1}{2}}x) J_\nu^{(2)}(  x |q^2)-q^{\frac{1}{2}+\nu}  \c_q (q^{\nu+\frac{1}{2}}x) J_{\nu+1}^{(2)}(  x |q^2)\Big).
\end{align*}
\end{corollary}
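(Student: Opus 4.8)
The plan is to specialize Theorem~\ref{2514b4} to the case $n=\nu$, where the two families of identities collapse considerably. The decisive simplification is that the coefficient $A_{n,\nu}=q^{-n}[n]_q^2-q^{-\nu}[\nu]_q^2$, defined in Theorem~\ref{thmnbs}, vanishes identically when $n=\nu$; this removes the $\S_q(q^{\nu+\frac32}x)$ term from the integrand of (\ref{fdscss}) and the $\c_q(q^{\nu+\frac32}x)$ term from the integrand of (\ref{fdscshnj}), leaving a single $q$-trigonometric factor on the left-hand side of each.

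First I would set $n=\nu$ in the constant $C_{n,\nu}=q^{\frac12}[n+\nu]_q+q^{-\frac12}[n-\nu+1]_q$, obtaining $C_{\nu,\nu}=q^{\frac12}[2\nu]_q+q^{-\frac12}$. A short calculation using $[k]_q=(1-q^k)/(1-q)$ shows
\begin{align*}
q^{\frac12}[2\nu]_q+q^{-\frac12}=\frac{q^{\frac12}(1-q^{2\nu})+q^{-\frac12}(1-q)}{1-q}=\frac{q^{-\frac12}(1-q^{2\nu+1})}{1-q}=q^{-\frac12}[2\nu+1]_q,
\end{align*}
so the surviving factor on the left is exactly $q^{-\frac12}[2\nu+1]_q$. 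Next I would simplify the functions $I_{n,\nu}$ and $\tilde I_{n,\nu}$: because the prefactor $(q^{-n}[n]_q-q^{-\nu}[\nu]_q)/x$ again vanishes at $n=\nu$, we get $I_{\nu,\nu}(x)=q^{-\frac12}\c_q(q^{\nu+\frac12}x)$ and $\tilde I_{\nu,\nu}(x)=-q^{-\frac12}\S_q(q^{\nu+\frac12}x)$.

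With these reductions, (\ref{fdscss}) reads
\begin{align*}
&q^{-\frac12}[2\nu+1]_q\int\frac{x^\nu\c_q(q^{\nu+\frac32}x)}{(-x^2(1-q)^2;q^2)_\infty}J_\nu^{(2)}(x|q^2)\,d_qx\\
&=\frac{x^{\nu+1}}{(-x^2(1-q)^2;q^2)_\infty}\Big(q^{-\frac12}\c_q(q^{\nu+\frac12}x)J_\nu^{(2)}(x|q^2)+q^{\nu}\S_q(q^{\nu+\frac12}x)J_{\nu+1}^{(2)}(x|q^2)\Big),
\end{align*}
and dividing through by the constant $q^{-\frac12}[2\nu+1]_q$ yields the first asserted identity. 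The second identity follows in the same way from (\ref{fdscshnj}), where the leading term $-x^nC_{\nu,\nu}\S_q(q^{\nu+\frac32}x)$ contributes the factor $-q^{-\frac12}[2\nu+1]_q$; dividing by this constant produces the sign pattern $\S_q(\cdots)J_\nu^{(2)}-q^{\nu+\frac12}\c_q(\cdots)J_{\nu+1}^{(2)}$ recorded in the statement.

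The only care needed---hardly a genuine obstacle---is the bookkeeping of the $q$-powers: verifying the collapse $C_{\nu,\nu}=q^{-\frac12}[2\nu+1]_q$ and tracking how the factor $q^{\frac12}$ arising from division by $q^{-\frac12}[2\nu+1]_q$ combines with the $q^{-\frac12}$ and $q^{\nu}$ already present to produce the clean coefficients $1$ and $q^{\nu+\frac12}$ in the final identities.
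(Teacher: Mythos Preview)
Your proposal is correct and follows exactly the paper's approach, which is simply to set $n=\nu$ in Theorem~\ref{2514b4}. You have merely spelled out in detail the simplifications ($A_{\nu,\nu}=0$, $C_{\nu,\nu}=q^{-1/2}[2\nu+1]_q$, and the collapse of $I_{\nu,\nu}$, $\tilde I_{\nu,\nu}$) that the paper leaves implicit.
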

 \begin{proof}
  The proof follows by setting $ n=\nu$ in Theorem {$\ref{2514b4}$}.
\end{proof}
\begin{theorem}\label{thmjdf}
Let $\mu$ and $\nu$ be complex numbers. Assume that
   $\Re (\nu ) > -1 $ and $ \Re  (m+\nu) >0$. Then
\begin{align}\label{dodooo}
 &\int x^{m+1}(- x^2  \lambda^2(1-q)^2;q^2)_\infty \left(q^{-m-2}\lambda^2 + A_{m,\nu} x^{-2}\right) J_\nu^{(1)}( \lambda x |q^2) d_qx =\nonumber \\&  x^{m+1}(- x^2 q^{-2} \lambda^2(1-q)^2;q^2)_\infty \left(\frac{[m]_q  J_\nu^{(1)}( \frac{\lambda x}{q} |q^2)}{q^{m}x}-q^{-m-1} D_{q^{-1}}J_\nu^{(1)}( \lambda x |q^2) \right),
\end{align}
or equivalently,
\begin{align}\label{jojok}
   &\int x^{m+1}(- x^2  \lambda^2(1-q)^2;q^2)_\infty \left(\frac{\lambda^2}{q^2} +\frac{A_{m,\nu}}{q^m x^2}\right) J_\nu^{(1)}( \lambda x |q^2) d_qx = \nonumber \\& x^{m+1}(- x^2 q^{-2} \lambda^2(1-q)^2;q^2)_\infty   \left(\frac{ [m]_q-[\nu]_q  }{x}J_\nu^{(1)}( \frac{\lambda x}{q} |q^2)+q^{\nu-1}\lambda J_{\nu+1}^{(1)}( \frac{\lambda x}{q} |q^2) \right).
\end{align}
In particular,
\begin{align*}
  &\int x^{\nu+1}(- x^2  \lambda^2(1-q)^2;q^2)_\infty  J_\nu^{(1)}( \lambda x |q^2) d_qx \\&= \frac{(qx)^{\nu+1}}{\lambda}\left(\frac{- x^2  \lambda^2}{q^2}(1-q)^2;q^2\right)_\infty J_{\nu+1}^{(1)}( \frac{\lambda x}{q} |q^2),
\end{align*}
and
\begin{align*}
&\int x^{1-\nu}(- x^2  \lambda^2(1-q)^2;q^2)_\infty  J_\nu^{(1)}( \lambda x |q^2) d_qx \\&=  - \frac{(qx)^{1-\nu}}{\lambda}\left(\frac{- x^2 \lambda^2}{q^2}(1-q)^2;q^2\right)_\infty   J_{\nu-1}^{(1)}( \frac{\lambda x}{q} |q^2),
\end{align*}
where $A_{m,\nu}$ is defined as in Theorem {$\ref{thmnbs}$}.
\end{theorem}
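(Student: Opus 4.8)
The plan is to follow the template of Theorems \ref{thmn1} and \ref{thmn2}. Since the right-hand side of (\ref{dodooo}) displays the $x/q$-shifted structure of (\ref{mkl})---it pairs $J_\nu^{(1)}(\tfrac{\lambda x}{q}|q^2)$ with $D_{q^{-1}}J_\nu^{(1)}(\lambda x|q^2)$ and carries a Pochhammer factor evaluated at $x/q$---the master identity to invoke is that of Theorem \ref{bvc}, namely (\ref{mkl}) with the $p(x)D_{q^{-1}}y$ term, rather than Theorem \ref{brevc}, which governed the $J^{(2)}$ case.

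First I would record the homogeneous second-order $q$-difference equation of type (\ref{mxlk}) satisfied by $J_\nu^{(1)}(\lambda x|q^2)$. It can be produced from the $J^{(2)}$-equation (\ref{mnbvxx}) through Hahn's relation, which writes $J_\nu^{(1)}$ as $J_\nu^{(2)}$ divided by the product $(-x^2\lambda^2(1-q)^2;q^2)_\infty$ appearing in the theorem: substituting $y^{(2)}=(-x^2\lambda^2(1-q)^2;q^2)_\infty\,y^{(1)}$ into (\ref{mnbvxx}) and dividing out the product turns the $D_q$-equation for $J^{(2)}$ into a $D_{q^{-1}}$-equation for $J^{(1)}$. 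After reading off $p(x)$ and $r(x)$, the decisive step is to verify that $f(x)=x\,(-x^2\lambda^2(1-q)^2;q^2)_\infty$ solves the first-order equation (\ref{fht}), $\tfrac1q D_{q^{-1}}f=p(x)f$. This is short: the product telescopes under $x\mapsto x/q$, since $(-x^2q^{-2}\lambda^2(1-q)^2;q^2)_\infty=(1+q^{-2}\lambda^2 x^2(1-q)^2)(-x^2\lambda^2(1-q)^2;q^2)_\infty$, so $D_{q^{-1}}f$ equals $f$ times an explicit rational factor and delivers $p(x)$ at once.

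With $f$ in hand I would specialize (\ref{mkl}) to this $y$ and $f$---the analogue of (\ref{maj}) and (\ref{monh})---and substitute $h(x)=x^m$. Using $D_{q^{-1}}x^m=q^{1-m}[m]_q x^{m-1}$ and $\tfrac1q D_{q^{-1}}D_q x^m=q^{1-m}[m]_q[m-1]_q x^{m-2}$, the operator acting on $x^m$ reduces to a combination of $x^m$ and $x^{m-2}$ whose coefficients, after collecting powers of $q$, assemble into the bracket of (\ref{dodooo}) with $A_{m,\nu}=q^{-m}[m]_q^2-q^{-\nu}[\nu]_q^2$; the right-hand side is read off from $f(x/q)$ and the $x/q$-evaluations of $h$ and $y$. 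To pass to (\ref{jojok}) I would eliminate $D_{q^{-1}}J_\nu^{(1)}(\lambda x|q^2)$ by the $J^{(1)}$-analogue of the derivative identities (\ref{4215f}) and (\ref{equ25}), which expresses it through $J_\nu^{(1)}(\tfrac{\lambda x}{q}|q^2)$ and $J_{\nu+1}^{(1)}(\tfrac{\lambda x}{q}|q^2)$; the $q$-powers multiplying the $J_\nu^{(1)}$ term then combine into $[m]_q-[\nu]_q$. Finally $m=\nu$ annihilates both $A_{\nu,\nu}$ and $[m]_q-[\nu]_q$ to give the first special case, while $m=-\nu$ together with a lowering three-term recurrence---the $J^{(1)}$-analogue of (\ref{254fdff})---converts $J_{\nu+1}^{(1)}$ into $J_{\nu-1}^{(1)}$ and yields the second.

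The main obstacle is not conceptual but the bookkeeping of $q$-shifts and $q$-powers. Because $J^{(1)}$ carries the product $(-x^2\lambda^2(1-q)^2;q^2)_\infty$, the identities mix the arguments $\lambda x$ and $\tfrac{\lambda x}{q}$ as well as the product evaluated at $x$ and at $x/q$, so the derivative and recurrence relations must be taken in exactly the right shifted form for the surplus powers of $q$ to cancel and for the coefficients to organize into $A_{m,\nu}$ and $[m]_q-[\nu]_q$. The hypotheses $\Re(\nu)>-1$ and $\Re(m+\nu)>0$ serve only to keep the series and $q$-integrals well defined. I expect the verification that $f$ solves (\ref{fht}) and the tracking of the $q^{-m}$-factors in the derivative identity to be the most delicate points, and the most likely source of stray factors of $q$ in the displayed formulas.
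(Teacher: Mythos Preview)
Your proposal is correct and follows precisely the template the paper uses for Theorems \ref{thmn1} and \ref{thmn2}; the paper in fact omits the proof of Theorem \ref{thmjdf} altogether, leaving it as an evident parallel, and your choice of Theorem \ref{bvc} with $f(x)=x\,(-x^2\lambda^2(1-q)^2;q^2)_\infty$ and $h(x)=x^m$ reproduces exactly the structure of the stated right-hand side. Your route to the $J^{(1)}$ equation via Hahn's relation and (\ref{mnbvxx}) is a reasonable way to obtain $p(x)$ and $r(x)$, and the subsequent specializations $m=\nu$, $m=-\nu$ with the appropriate lowering recurrence mirror the paper's handling of (\ref{smsh})--(\ref{smsmn}).
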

\begin{example}
For $\nu=0$ and $ m = 1$,  Equation (${\ref{jojok}}$) will be
\begin{align*}
  &\int (- x^2  \lambda^2(1-q)^2;q^2)_\infty (x^2 +\frac{q^2}{\lambda^2})J_0^{(1)}( \lambda x |q^2) d_qx\\& =  x^{2}(- x^2  \lambda^2(1-q)^2;q^2)_\infty \left(\frac{ q^2 J_0^{(1)}( \frac{\lambda x}{q} |q^2)}{ \lambda^2 x}+ \frac{q}{\lambda} J_1^{(1)}( \frac{\lambda x}{q} |q^2) \right).
\end{align*}
Moreover, if $\lambda $  is a  zero of $J_0^{(1)}( x|q^2)$, then
\begin{align*}
  &\int_{0}^{1} (-q^2 x^2  \lambda^2(1-q)^2;q^2)_\infty (x^2 +\frac{1}{\lambda^2})J_0^{(1)}( q\lambda x |q^2) d_qx \\&= \frac{ (- q^2  \lambda^2(1-q)^2;q^2)_\infty}{\lambda} J_1^{(1)}( \lambda |q^2).
\end{align*}
\end{example}
\begin{theorem}\label{nbhdsnjjnsb}
Let $\nu$ and $n$  be a complex number with $\Re  (\nu) > -1 $.  Let
 \begin{align*}
 & K_{n,\nu} := \Big(q^{-\nu-n-\frac{3}{2}}[\nu+n]_q-q^{\frac{-1}{2}}[\nu-n-1]_q\Big),\\&
 P_{n,\nu}(x) :=  \frac{q^{-n}[n]_q-q^{-n}[\nu]_q}{x} \sin_q(q^{-n-\frac{3}{2}}x) +q^{-n-\frac{3}{2}}\cos_q(q^{-(n+\frac{3}{2})}x),\\&
   \tilde{P}_{n,\nu}(x):= \frac{q^{-n}[n]_q-q^{-n}[\nu]_q}{x} \cos_q(q^{-(n+\frac{3}{2})}x) -q^{-n-\frac{3}{2}}\sin_q(q^{-(n+\frac{3}{2})}x).
 \end{align*}
 Then
  \begin{align}\label{fdscsshf}
   &\int (- x^2  (1-q)^2;q^2)_\infty  \Big( A_{n,\nu}  x^{n-1} \sin_q(q^{-n-\frac{3}{2}}x)  + K_{n,\nu} x^{n} \cos_q (q^{-n-\frac{3}{2}}x) \Big)  J_\nu^{(1)}(  x |q^2) d_qx \nonumber \\&=x^{n+1}(- \frac{x^2 }{q^2}(1-q)^2;q^2)_\infty  \Big( P_{n,\nu}(x) J_\nu^{(1)}( \frac{x}{q}|q^2)+q^{\nu-n-1}  \sin_q (q^{-(n+\frac{3}{2})}x) J_{\nu+1}^{(1)}( \frac{ x}{q} |q^2)\Big),
  \end{align}
  and
  \begin{align}\label{fdscshnjsr}
   &\int (- x^2 (1-q)^2;q^2)_\infty  \Big( A_{n,\nu}     x^{n-1} \cos_q(q^{-n-\frac{3}{2}}x)  - K_{n,\nu} x^{n} \sin_q(q^{-n-\frac{3}{2}}x) \Big)  J_\nu^{(1)}(  x |q^2) d_qx \nonumber \\&=x^{n+1}(-\frac{x^2 }{q^2}(1-q)^2;q^2)_\infty \Big( \tilde{P}_{n,\nu}(x) J_\nu^{(1)}( \frac{x}{q} |q^2)+ q^{\nu-n-1} \cos_q (q^{-(n+\frac{3}{2})}x) J_{\nu+1}^{(1)}( \frac{ x}{q} |q^2)\Big),
  \end{align}
  where $A_{n,\nu}$ is defined as in Theorem {$\ref{thmnbs}$}.
\end{theorem}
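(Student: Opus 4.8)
The plan is to run the same scheme used in the proofs of Theorem \ref{thmnbs} and Theorem \ref{2514b4}, now for the first Jackson $q$-Bessel function. First I would recall from Theorem \ref{thmjdf} that, comparing the $q$-difference equation of $J_\nu^{(1)}(\lambda x|q^2)$ with \eqref{mx}, the multiplier $f(x)=x(-x^2\lambda^2(1-q)^2;q^2)_\infty$ solves \eqref{frht5}, so the master identity \eqref{mkrl} specializes to
\begin{align*}
\int f(x)\,(L_{\nu,q}^{(1)}h)(x)\,J_\nu^{(1)}(\lambda x|q^2)\,d_qx
= f(x)\Big(J_\nu^{(1)}(\lambda x|q^2)\,D_{q^{-1}}h(x)-h(x)\,D_{q^{-1}}J_\nu^{(1)}(\lambda x|q^2)\Big),
\end{align*}
valid for arbitrary $h$, where $(L_{\nu,q}^{(1)}h)(x)=\tfrac1q D_{q^{-1}}D_qh(x)+p(x)D_qh(x)+r(x)h(x)$ with the $p,r$ read off from that equation. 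I would then set $\lambda=1$ and take $h(x)=x^n\sin_q(q^{-n-\frac32}x)$ to produce \eqref{fdscsshf} and $h(x)=x^n\cos_q(q^{-n-\frac32}x)$ to produce \eqref{fdscshnjsr}.

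The core computation is to evaluate $(L_{\nu,q}^{(1)}h)(x)$ for these two choices. Here I would apply the $q$-Leibniz rule together with the derivative relations $D_q\sin_q z=\cos_q z$ and $D_q\cos_q z=-\sin_q z$ (and the chain rule $D_q[\sin_q(ax)]=a\cos_q(ax)$) to obtain $D_qh$ and then $\tfrac1q D_{q^{-1}}D_qh$, carefully tracking the powers of $q$ coming both from differentiating $x^n$ and from the $q$-dilation $z\mapsto qz$ inside the trigonometric factors. Collecting the three contributions — the second-order term, the first-order term $p(x)D_qh$, and the potential $r(x)h$ — should, after simplification, reduce exactly to the integrand $A_{n,\nu}x^{n-1}\sin_q(q^{-n-\frac32}x)+K_{n,\nu}x^n\cos_q(q^{-n-\frac32}x)$ in the sine case, and its counterpart in the cosine case. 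This is the step where the precise forms of $A_{n,\nu}$ and $K_{n,\nu}$ must emerge, and I expect it to be the main obstacle: the cancellations among the half-integer $q$-powers are delicate, and a single misplaced factor of $q^{\pm1/2}$ would destroy the match.

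For the right-hand side I would compute $D_{q^{-1}}h(x)$ by the same $q$-product rule and substitute the $q$-difference formula for $D_{q^{-1}}J_\nu^{(1)}(\lambda x|q^2)$ — the $J^{(1)}$ analogue of \eqref{4215f} and \eqref{equ25}, which is exactly what introduces the shifted arguments $J_\nu^{(1)}(\tfrac{x}{q}|q^2)$, $J_{\nu+1}^{(1)}(\tfrac{x}{q}|q^2)$ and the dilated infinite product $(-\tfrac{x^2}{q^2}(1-q)^2;q^2)_\infty$ appearing on the right of \eqref{fdscsshf}–\eqref{fdscshnjsr}. Finally I would invoke the shift identity $\sin_q z=\sin_q(qz)+(1-q)z\cos_q z$ (and its cosine analogue), read off directly from $D_q\sin_q z=\cos_q z$, to consolidate the boundary terms into the functions $P_{n,\nu}(x)$ and $\tilde P_{n,\nu}(x)$, in exact parallel with the role of the identity $\S_q(q^{n+\frac12}x)=q^{n+\frac12}(1-q)x\,\c_q(q^{n+\frac32}x)+\S_q(q^{n+\frac32}x)$ in Theorem \ref{2514b4}. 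Matching coefficients then yields \eqref{fdscsshf} and \eqref{fdscshnjsr}.
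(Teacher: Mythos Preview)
Your overall strategy is right, but you have picked the wrong master identity. The $q$-difference equation satisfied by $J_\nu^{(1)}(\lambda x|q^2)$ is of the form \eqref{mxlk} (first-order term $p(x)D_{q^{-1}}y$), not \eqref{mx}; accordingly the multiplier $f(x)=x(-x^2\lambda^2(1-q)^2;q^2)_\infty$ solves \eqref{fht} and the relevant identity is \eqref{mkl} from Theorem~\ref{bvc}, whose boundary term is
\[
f(x/q)\Big(y(x/q)\,D_{q^{-1}}h(x)-h(x/q)\,D_{q^{-1}}y(x)\Big).
\]
This is exactly how the paper handles $J_\nu^{(1)}$ in Theorem~\ref{thmjdf}, and it is why the right-hand sides of \eqref{fdscsshf}--\eqref{fdscshnjsr} carry the dilated product $(-\tfrac{x^2}{q^2}(1-q)^2;q^2)_\infty$ and the shifted arguments $J_\nu^{(1)}(\tfrac{x}{q}|q^2)$, $J_{\nu+1}^{(1)}(\tfrac{x}{q}|q^2)$. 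With \eqref{mkrl} you would obtain $f(x)=x(-x^2(1-q)^2;q^2)_\infty$ on the right, and no recurrence for $D_{q^{-1}}J_\nu^{(1)}$ can convert that unshifted infinite product into the shifted one required by the statement; your explanation that the shifts are ``introduced by the recurrence'' is therefore not tenable for the product factor.

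Concretely, you should compute $(L^{(1)}_{\nu,q}h)(x)=\tfrac1q D_{q^{-1}}D_qh(x)+p(x)D_{q^{-1}}h(x)+r(x)h(x)$ (note $D_{q^{-1}}h$, not $D_qh$), with $h(x)=x^n\sin_q(q^{-n-\frac32}x)$ and $h(x)=x^n\cos_q(q^{-n-\frac32}x)$, and substitute into \eqref{mkl}. The rest of your plan---using the $q$-Leibniz rule, the derivative relations for $\sin_q,\cos_q$, the $J^{(1)}$ analogue of \eqref{equ25} to replace $D_{q^{-1}}J_\nu^{(1)}$, and a shift identity for $\sin_q$ to repackage the boundary terms as $P_{n,\nu}$, $\tilde P_{n,\nu}$---is exactly the route the paper (implicitly) takes, in direct analogy with Theorem~\ref{2514b4}.
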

Corollaries {$\ref{kmjdngggdffd2511}$} and {$\ref{kmjnyyytryyr}$} follows from Theorem {$\ref{nbhdsnjjnsb}$} when $n=0$ and $n=\nu$, respectively.
\begin{corollary}\label{kmjdngggdffd2511}
For $\Re  (\nu) > -1 $,  Equations $({\ref{fdscsshf}})$ and  $({\ref{fdscshnjsr}})$, will be
\begin{align*}
  &\int (- x^2  (1-q)^2;q^2)_\infty  \Big( A_{0,\nu}  x^{-1} \sin_q(q^{-\frac{3}{2}}x)  + K_{0,\nu} \cos_q (q^{-\frac{3}{2}}x) \Big)  J_\nu^{(1)}(  x |q^2) d_qx \nonumber \\&=x(- \frac{x^2 }{q^2}(1-q)^2;q^2)_\infty  \Big( P_{0,\nu}(x) J_\nu^{(1)}( \frac{x}{q}|q^2)+q^{\nu-1}  \sin_q (q^{-\frac{3}{2}}x) J_{\nu+1}^{(1)}( \frac{ x}{q} |q^2)\Big),
  \end{align*}
and
\begin{align*}
  &\int (- x^2 (1-q)^2;q^2)_\infty  \Big( A_{0,\nu}     x^{-1} \cos_q(q^{-\frac{3}{2}}x)  - K_{0,\nu}  \sin_q(q^{-\frac{3}{2}}x) \Big)  J_\nu^{(1)}(  x |q^2) d_qx \nonumber \\&=x(-\frac{x^2 }{q^2}(1-q)^2;q^2)_\infty \Big( \tilde{P}_{0,\nu}(x) J_\nu^{(1)}( \frac{x}{q} |q^2)+ q^{\nu-1} \cos_q (q^{-\frac{3}{2}}x) J_{\nu+1}^{(1)}( \frac{ x}{q} |q^2)\Big),
  \end{align*}
respectively,
where  $K_{0,\nu}(x)$, $P_{0,\nu}(x)$ and $\tilde{P}_{0,\nu}(x) $ are defined as in Theorem {\ref{thmjdf}} with $n=0$ and
$ A_{0,\nu}$ is defined as in Theorem {$\ref{thmnbs}$} with $n=0$.
\end{corollary}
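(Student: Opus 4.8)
The plan is to obtain Corollary \ref{kmjdngggdffd2511} as an immediate specialization of Theorem \ref{nbhdsnjjnsb}, with no new computation beyond setting $n=0$ and simplifying the resulting powers of $x$ and $q$. Theorem \ref{nbhdsnjjnsb} already establishes the two identities (\ref{fdscsshf}) and (\ref{fdscshnjsr}) for an arbitrary complex parameter $n$ under the single standing hypothesis $\Re(\nu)>-1$, so substituting the particular value $n=0$ is fully legitimate and produces the stated formulas directly; I would not re-derive the general-$n$ identities from the Lagrangian identity of Theorem \ref{brevc}.

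Concretely, first I would substitute $n=0$ everywhere in (\ref{fdscsshf}) and (\ref{fdscshnjsr}). Under this substitution the auxiliary constants $A_{n,\nu}$ and $K_{n,\nu}$ become $A_{0,\nu}$ and $K_{0,\nu}$, and the auxiliary functions $P_{n,\nu}(x)$ and $\tilde P_{n,\nu}(x)$ become $P_{0,\nu}(x)$ and $\tilde P_{0,\nu}(x)$, precisely the $n=0$ instances named in the statement. Second, I would collapse the explicit monomials and exponents: on the integrand side $x^{n-1}\mapsto x^{-1}$ and $x^{n}\mapsto 1$, on the right-hand prefactor $x^{n+1}\mapsto x$, while the shift exponents reduce as $q^{-n-\frac{3}{2}}\mapsto q^{-\frac{3}{2}}$ and the weight in the $J_{\nu+1}^{(1)}$ term reduces as $q^{\nu-n-1}\mapsto q^{\nu-1}$. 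Reading off the two resulting displays gives exactly the pair of identities claimed in the corollary.

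Since every ingredient is inherited verbatim from Theorem \ref{nbhdsnjjnsb}, there is no genuine obstacle to overcome; the only point requiring a moment of care is the bookkeeping, namely verifying that after the collapse $n=0$ the $q$-power on the $J_{\nu+1}^{(1)}$ term is $q^{\nu-1}$ and the $x$-power on the right-hand prefactor is $x^{1}$, so that the homogeneity of both sides is preserved. No convergence or continuity condition beyond those already imposed in Theorem \ref{nbhdsnjjnsb} is introduced, and the hypothesis $\Re(\nu)>-1$ is carried over unchanged.
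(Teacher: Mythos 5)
Your proposal is correct and matches the paper's own proof: the paper likewise obtains Corollary \ref{kmjdngggdffd2511} simply by setting $n=0$ in Equations (\ref{fdscsshf}) and (\ref{fdscshnjsr}) of Theorem \ref{nbhdsnjjnsb}, with no additional argument. Your careful bookkeeping of the exponents ($x^{n\pm1}$, $q^{-n-\frac{3}{2}}$, $q^{\nu-n-1}$) is exactly what the specialization requires, and the hypothesis $\Re(\nu)>-1$ carries over unchanged as you note.
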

\begin{corollary}\label{kmjnyyytryyr}
For $\Re  (\nu) > -1 $,  Equations $({\ref{fdscsshf}})$and  $({\ref{fdscshnjsr}})$, will be
\begin{align*}
&\int x^\nu(- x^2  (1-q)^2;q^2)_\infty      \cos_q (q^{-(\nu+\frac{3}{2})}x)   J_\nu^{(1)}(  x |q^2) d_qx \\&=\frac{x^{\nu+1}(- x^2 q^{-2}(1-q)^2;q^2)_\infty }{q^{-\nu}[2\nu+1]_q}\Big(   \cos_q(q^{-(\nu+\frac{3}{2})}x) J_\nu^{(1)}( \frac{x}{q} |q^2)+ q^{\nu+\frac{1}{2}} \sin_q(q^{-(\nu+\frac{3}{2})}x) J_{\nu+1}^{(1)}( \frac{ x}{q} |q^2)\Big),
\end{align*}
and
\begin{align*}
&\int x^\nu(- x^2 (1-q)^2;q^2)_\infty      \sin_q(q^{-(\nu+\frac{3}{2})}x)  J_\nu^{(1)}(  x |q^2) d_qx \\&=\frac{x^{\nu+1}(- x^2 q^{-2}(1-q)^2;q^2)_\infty }{q^{-\nu} [2\nu+1]_q}\Big(  \sin_q(q^{-(\nu+\frac{3}{2})}x) J_\nu^{(1)}(\frac{ x}{q} |q^2)- q^{\nu+\frac{1}{2}} \cos_q(q^{-(\nu+\frac{3}{2})}x) J_{\nu+1}^{(1)}( \frac{ x}{q} |q^2)\Big),
\end{align*}
respectively.
\end{corollary}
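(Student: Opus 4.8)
The plan is to obtain this corollary as the $n=\nu$ specialization of Theorem \ref{nbhdsnjjnsb}, exploiting two simplifications that occur precisely at $n=\nu$ and then dividing out a single nonzero constant. The key point is that at $n=\nu$ the general parent identities (\ref{fdscsshf}) and (\ref{fdscshnjsr}), which mix $\sin_q$ and $\cos_q$ on both sides, degenerate into identities carrying only one trigonometric function.

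First I would substitute $n=\nu$ into the definitions preceding (\ref{fdscsshf}). The decisive observation is that $A_{\nu,\nu}=q^{-\nu}[\nu]_q^2-q^{-\nu}[\nu]_q^2=0$, so the $A_{n,\nu}$-weighted summand in each integrand disappears, leaving only the $K_{\nu,\nu}$-weighted term on the left. Simultaneously, the coefficient $\frac{q^{-n}[n]_q-q^{-n}[\nu]_q}{x}$ appearing in $P_{n,\nu}(x)$ and $\tilde P_{n,\nu}(x)$ vanishes at $n=\nu$, so that $P_{\nu,\nu}(x)=q^{-\nu-3/2}\cos_q(q^{-(\nu+3/2)}x)$ and $\tilde P_{\nu,\nu}(x)=-q^{-\nu-3/2}\sin_q(q^{-(\nu+3/2)}x)$. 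Thus both the integrand and the Wronskian-type right-hand side collapse to single $\cos_q$ (resp.\ $\sin_q$) terms multiplying $J_\nu^{(1)}$ and $J_{\nu+1}^{(1)}$.

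Next I would evaluate the constant $K_{\nu,\nu}$ explicitly. Using $[-1]_q=-q^{-1}$ one gets $K_{\nu,\nu}=q^{-2\nu-3/2}[2\nu]_q+q^{-3/2}$, and a short telescoping of $q$-powers, namely $q^{-2\nu}[2\nu]_q+1=q^{-2\nu}[2\nu+1]_q$, yields $K_{\nu,\nu}=q^{-2\nu-3/2}[2\nu+1]_q$. Dividing the specialized (\ref{fdscsshf}) through by this constant turns the left-hand integrand into $x^\nu(-x^2(1-q)^2;q^2)_\infty\cos_q(q^{-(\nu+3/2)}x)J_\nu^{(1)}(x|q^2)$, exactly as in the corollary. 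On the right the bookkeeping $q^{\nu-n-1}=q^{-1}$ at $n=\nu$ combines with the factor $q^{-\nu-3/2}$ from $P_{\nu,\nu}$ and with $1/K_{\nu,\nu}$; since $\frac{q^{-\nu-3/2}}{K_{\nu,\nu}}=\frac{1}{q^{-\nu}[2\nu+1]_q}$, this produces the stated prefactor $\dfrac{x^{\nu+1}(-x^2q^{-2}(1-q)^2;q^2)_\infty}{q^{-\nu}[2\nu+1]_q}$ and the internal weight $q^{\nu+1/2}$ on the $J_{\nu+1}^{(1)}$ term.

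The only step carrying any content is the evaluation $K_{\nu,\nu}=q^{-2\nu-3/2}[2\nu+1]_q$; everything else is direct substitution, and the main thing to watch is the careful tracking of half-integer powers of $q$ when redistributing factors so that the normalization $q^{-\nu}[2\nu+1]_q$ and the weight $q^{\nu+1/2}$ emerge correctly. The second equation follows identically from (\ref{fdscshnjsr}), with one sign subtlety: there the surviving left-hand term carries the factor $-K_{\nu,\nu}$ (coming from $-K_{n,\nu}x^n\sin_q$), so one divides by $-K_{\nu,\nu}$, and this sign reversal is precisely what converts the leading minus of $\tilde P_{\nu,\nu}$ into the positive $\sin_q$ coefficient while flipping the $J_{\nu+1}^{(1)}$ term negative, matching the corollary's second identity.
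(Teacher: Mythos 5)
Your proposal is correct and follows exactly the route the paper takes: the paper proves this corollary simply by setting $n=\nu$ in Theorem \ref{nbhdsnjjnsb}, and your specialization (noting $A_{\nu,\nu}=0$, the collapse of $P_{\nu,\nu}$ and $\tilde P_{\nu,\nu}$ to single terms, the evaluation $K_{\nu,\nu}=q^{-2\nu-3/2}[2\nu+1]_q$ via $[-1]_q=-q^{-1}$, and division by $\pm K_{\nu,\nu}$) is precisely the computation the paper leaves implicit. In fact your write-up supplies the bookkeeping the paper omits, and all the factor checks ($q^{-\nu-3/2}/K_{\nu,\nu}=1/(q^{-\nu}[2\nu+1]_q)$ and $q^{-1}/K_{\nu,\nu}=q^{\nu+1/2}/(q^{-\nu}[2\nu+1]_q)$) are accurate.
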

\vskip0.5cm
The general solution of the  second-order $q$-difference equation
\begin{equation}\label{gg}
 \frac{1}{q}  D_{q^{-1}} D_q y(x) =   x y(x),
\end{equation}
is
  \begin{equation*}
 y(x):=  a_0  AI(x;q) + a_1  BI(x;q),
\end{equation*}
where
\begin{align*}
  AI(x;q)=\sum_{m=0}^{\infty} \dfrac{q^{\frac{m(3m+1)}{2}} (1-q)^{2m} x^{3m}}{(q^3,q^2;q^3)_m},BI(x;q) = \sum_{m=0}^{\infty} \dfrac{q^{\frac{3m(m+1)}{2}} (1-q)^{2m} x^{3m+1}}{(q^3,q^4;q^3)_m},x\in \mathbb C .
\end{align*}
\begin{theorem}\label{lap}
  For $n \in \mathbb{N}$,
  \begin{align*}
 &\int x^{n+1} \left( 1-\frac{q^{1-n}[n]_q [n-1]_q}{x^3}    \right) AI(x;q) d_qx = q^{-n} x^{n-1} \left( x D_{q^{-1}} AI(x;q)- q [n]_q  AI(\frac{x}{q};q)\right),
 \end{align*}
 and
 \begin{align*}
 &\int x^{n+1} \left( 1-\frac{q^{1-n}[n]_q [n-1]_q}{x^3}    \right)  BI(x;q) d_qx = q^{-n} x^{n-1} \left( x D_{q^{-1}}  BI(x;q)- q [n]_q   BI(\frac{x}{q};q)\right) .
 \end{align*}
 \end{theorem}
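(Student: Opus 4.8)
The plan is to recognize equation~(\ref{gg}) as the special case of the homogeneous equation~(\ref{mxlk}) in which $p(x)\equiv 0$ and $r(x)=-x$, and then to feed it into Theorem~\ref{bvc} with the monomial $h(x)=x^{n}$. Rewriting~(\ref{gg}) as $\frac{1}{q}D_{q^{-1}}D_q y(x)+0\cdot D_{q^{-1}}y(x)+(-x)y(x)=0$ shows that both $AI(x;q)$ and $BI(x;q)$ are admissible choices for $y$. With $p\equiv 0$, the auxiliary first-order equation~(\ref{fht}) becomes $\frac{1}{q}D_{q^{-1}}f(x)=0$, whose solutions are the constants; I would simply take $f(x)=1$.

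Next I would substitute $f\equiv 1$, $r(x)=-x$, and $h(x)=x^{n}$ into the master identity~(\ref{mkl}), giving
\[
\int\Big(\tfrac{1}{q}D_{q^{-1}}D_q(x^{n})-x\cdot x^{n}\Big)AI(x;q)\,d_qx
= AI(\tfrac{x}{q};q)\,D_{q^{-1}}(x^{n})-\big(\tfrac{x}{q}\big)^{n}D_{q^{-1}}AI(x;q).
\]
The computation then reduces to two applications of the $q$-power rule. From $D_q x^{n}=[n]_q x^{n-1}$ together with $D_{q^{-1}}x^{k}=q^{1-k}[k]_q x^{k-1}$ one obtains $D_{q^{-1}}(x^{n})=q^{1-n}[n]_q x^{n-1}$ and $\frac{1}{q}D_{q^{-1}}D_q(x^{n})=q^{1-n}[n]_q[n-1]_q x^{n-2}$. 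Hence the value of the integrand operator is $q^{1-n}[n]_q[n-1]_q x^{n-2}-x^{n+1}$, which is exactly the negative of the integrand $x^{n+1}\big(1-q^{1-n}[n]_q[n-1]_q x^{-3}\big)$ appearing in the statement.

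Accordingly, I would multiply the displayed identity through by $-1$ and collect the common factor $q^{-n}x^{n-1}$ on the right, turning $\big(\tfrac{x}{q}\big)^{n}D_{q^{-1}}AI(x;q)-q^{1-n}[n]_q x^{n-1}AI(\tfrac{x}{q};q)$ into $q^{-n}x^{n-1}\big(xD_{q^{-1}}AI(x;q)-q[n]_q AI(\tfrac{x}{q};q)\big)$, which is the asserted right-hand side; replacing $AI$ by $BI$ throughout yields the second formula verbatim, since only the fact that $BI$ solves~(\ref{gg}) is used. The single point that requires care --- and which I regard as the main (if modest) obstacle --- is the asymmetry of the backward $q$-derivative power rule, namely the extra factor $q^{1-k}$ in $D_{q^{-1}}x^{k}=q^{1-k}[k]_q x^{k-1}$, together with the sign coming from $r(x)=-x$; once these are handled correctly, the coefficients $q^{1-n}[n]_q[n-1]_q$, $q^{-n}$, and $q[n]_q$ fall out automatically (with the $n=1$ case covered harmlessly by $[0]_q=0$).
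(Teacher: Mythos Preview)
Your proposal is correct and follows essentially the same approach as the paper: identify $p(x)=0$, $r(x)=-x$, take $f(x)=1$, and substitute $h(x)=x^{n}$ into the master identity~(\ref{mkl}). The paper's proof is terser, but your added verification of the $q$-power rule for $D_{q^{-1}}$ and the sign bookkeeping is exactly what is needed to land on the stated coefficients.
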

\begin{proof}
By comparing  Equation (${\ref{gg}}$) with Equation (${\ref{mxlk}}$), we get \[ p(x)=0 ,\quad   r(x) = - x.\]  Then the solution of Equation ({\ref{fht}}) is $f(x) =1 $.
Substituting with $h(x) = x^n$ into Equation (${\ref{mkl}}$), we get the desired result.
\end{proof}
\begin{corollary}
 \begin{align*}
 \int x AI(x;q) d_qx =D_{q^{-1}} AI(x;q),
 \end{align*}
 \begin{align*}
 \int x BI(x;q) d_qx =D_{q^{-1}} BI(x;q),
 \end{align*}
   \begin{align*}
  \int x^2 AI(x;q) d_qx = - AI(\frac{x}{q};q)+\frac{x}{q} D_{q^{-1}} AI(x;q),
 \end{align*}
 and
 \begin{align*}
  \int x^2 BI(x;q) d_qx = - BI(\frac{x}{q};q)+\frac{x}{q} D_{q^{-1}} BI(x;q).
 \end{align*}
 \end{corollary}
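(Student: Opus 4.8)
The plan is to derive all four identities as the two simplest special cases, $n=0$ and $n=1$, of Theorem~\ref{lap}, exploiting the fact that the product $[n]_q[n-1]_q$ vanishes for both of these values. This is precisely what causes the inhomogeneous $x^{-3}$ correction term in the integrand to disappear, so that in each case we are left integrating a pure monomial multiple of $AI$ (respectively $BI$) and obtain a clean closed form.

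First I would set $n=0$ in Theorem~\ref{lap}. Since $[0]_q=0$, the coefficient $q^{1-n}[n]_q[n-1]_q/x^3$ vanishes, and the term $q[n]_q\,AI(x/q;q)$ on the right-hand side vanishes as well. What survives is $\int x\,AI(x;q)\,d_qx = q^{0}x^{-1}\bigl(x\,D_{q^{-1}}AI(x;q)\bigr) = D_{q^{-1}}AI(x;q)$, which is the first identity. The same computation with $BI$ in place of $AI$ gives the second. Next I would set $n=1$: now $[n-1]_q=[0]_q=0$ again annihilates the $x^{-3}$ term, while $[1]_q=1$. Theorem~\ref{lap} then reads $\int x^{2}\,AI(x;q)\,d_qx = q^{-1}x^{0}\bigl(x\,D_{q^{-1}}AI(x;q) - q\,AI(x/q;q)\bigr) = \frac{x}{q}D_{q^{-1}}AI(x;q) - AI(x/q;q)$, which is the third identity; the $BI$ version follows verbatim.

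There is no genuine obstacle here, only one bookkeeping point worth recording. Although Theorem~\ref{lap} is phrased for $n\in\mathbb{N}$, its proof proceeds by substituting the arbitrary function $h(x)=x^n$ into Equation~(\ref{mkl}), and $h(x)=x^{0}=1$ is an equally admissible choice; hence the case $n=0$ is covered with no modification to the argument. The mild structural feature that powers the whole corollary is exactly that both $n=0$ and $n=1$ force $[n]_q[n-1]_q=0$, which removes the inhomogeneous term and yields explicit antiderivatives of $x\,AI$, $x^{2}\,AI$, and their $BI$ analogues.
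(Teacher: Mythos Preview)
Your proposal is correct and matches the paper's own proof, which likewise obtains the first two identities from $n=0$ and the last two from $n=1$ in Theorem~\ref{lap}. Your remark that $n=0$ is admissible because the underlying substitution $h(x)=x^n$ in Equation~(\ref{mkl}) works for $h(x)=1$ is a valid justification that the paper leaves implicit.
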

  \begin{proof}
  The proof of the first two $q$-integrals and the second two $q$-integrals follow by substituting with $n=0$ and $n=1$ in Theorem {\ref{lap}}, respectively.
 \end{proof}
 \begin{theorem}
 We have
\begin{align*}
 & \int \left(1+x\right) \sin(x;q)  AI(x;q) d_qx =  \sin(\frac{x}{q};q)  D_{q^{-1}}AI(x;q) - \cos(q^{-\frac{1}{2}}x;q) AI(q^{-1}x;q),
\end{align*}
and
\begin{align*}
 & \int \left(1+x\right) \sin(x;q)  BI(x;q) d_qx =  \sin(\frac{x}{q};q)  D_{q^{-1}}BI(x;q) - \cos(q^{-\frac{1}{2}}x;q) BI(q^{-1}x;q).
\end{align*}
\end{theorem}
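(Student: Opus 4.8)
The plan is to recognize this as a direct application of Theorem~\ref{bvc} with the same data already set up in Theorem~\ref{lap}. Comparing the defining equation~(\ref{gg}) of $AI(x;q)$ and $BI(x;q)$ with the homogeneous equation~(\ref{mxlk}) gives $p(x)=0$ and $r(x)=-x$; hence equation~(\ref{fht}) reduces to $D_{q^{-1}}f(x)=0$ and is solved by $f(x)=1$. With these choices the master identity~(\ref{mkl}) collapses to
\begin{align*}
\int\Big(\frac{1}{q}D_{q^{-1}}D_q h(x)-x\,h(x)\Big)y(x)\,d_qx = y(x/q)\,D_{q^{-1}}h(x)-h(x/q)\,D_{q^{-1}}y(x),
\end{align*}
valid for $y=AI(\cdot;q)$ and $y=BI(\cdot;q)$ and for arbitrary $h$. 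The whole proof then amounts to choosing $h$ so that the bracketed operator produces the integrand $(1+x)\sin(x;q)$.

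The natural candidate is $h(x)=-\sin(x;q)$. To evaluate the operator and the boundary term I would first record the elementary identity $D_{q^{-1}}g(x)=(D_q g)(x/q)$, together with the scaling rule $D_q\big(g(cx)\big)=c\,(D_q g)(cx)$. Combining these with the stated $q$-difference relations $D_q\sin(z;q)=\cos(q^{1/2}z;q)$ and $D_q\cos(z;q)=-q^{1/2}\sin(q^{1/2}z;q)$, I would compute
\begin{align*}
D_{q^{-1}}h(x)=-\cos(q^{-1/2}x;q),\qquad D_q h(x)=-\cos(q^{1/2}x;q),
\end{align*}
and then, applying $D_{q^{-1}}$ once more and using the scaling rule on the argument $q^{1/2}x$,
\begin{align*}
\frac{1}{q}D_{q^{-1}}D_q h(x)=\sin(x;q).
\end{align*}
Since $-x\,h(x)=x\sin(x;q)$, the bracketed operator equals $(1+x)\sin(x;q)$, exactly the required integrand.

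Substituting these expressions for $D_{q^{-1}}h(x)$ and $h(x/q)=-\sin(x/q;q)$ into the collapsed identity, first with $y=AI(\cdot;q)$ and then with $y=BI(\cdot;q)$, yields the two claimed formulas directly. I expect the only delicate point to be the bookkeeping of the half-integer $q$-shifts: the double application of $D_{q^{-1}}D_q$ to $\sin(x;q)$ passes through the argument $q^{1/2}x$ and must reproduce exactly $q\sin(x;q)$, so that the factor $1/q$ cancels and the $r(x)=-x$ term combines cleanly to give the coefficient $(1+x)$. Getting every $q^{\pm 1/2}$ and $q^{\pm1}$ shift correct in that chain is where an error would most plausibly creep in.
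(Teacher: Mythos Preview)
Your proof is correct and is essentially the paper's own approach: the paper simply says to substitute $h(x)=\sin(x;q)$ into the master identity~(\ref{mkl}) with the data $p(x)=0$, $r(x)=-x$, $f(x)=1$ already set up in Theorem~\ref{lap}. Your choice $h(x)=-\sin(x;q)$ differs from the paper's only by an overall sign, which cancels when you multiply both sides by $-1$, and your detailed verification of the $q$-shift bookkeeping (in particular $D_{q^{-1}}D_q\sin(x;q)=-q\sin(x;q)$) is exactly what is needed to make the one-line argument rigorous.
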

 \begin{proof}
   The proof follows immediately by substituting with $h(x) = \sin(x;q)$ into Equation (${\ref{mkl}}$).
 \end{proof}
\begin{theorem}
Let $a,b,c \in \mathbb{C}$, $c \neq 0$,  $\alpha = \frac{[a]_q [b]_q}{[c]_q}$, $\beta=\frac{[a+b+1]_q}{[c]_q}$, $\alpha_1=\beta-\alpha $ and
\begin{align*}
  &T_\alpha(x) :=\frac{\alpha}{(1+\alpha (1-q) x )} -\frac{[a+b+1]_q}{q^{c+1} (1-q^{a+b-c}x)},\\&
 S_{\alpha_1}(x):= \frac{q \alpha_1 -\beta}{(1-\alpha_1 x)(q-\beta x)}.
\end{align*}
  Then
\begin{align}\label{2145hgf}
  &\int x^{c-1} (x;q)_{a+b-c} \,_2\phi_1(q^a,q^b;q^c;q,x) d_qx =  \frac{x^c }{[c]_q}  (\frac{x}{q};q)_{a+b+1-c} \,_2\phi_1(q^{a+1},q^{b+1};q^{c+1};q,\frac{x}{q}),
\end{align}
\begin{align}\label{2146hgf}
  &\int x^{c-1} (x;q)_{a+b-c} \left([c]_q-\frac{x}{q} [a+1]_q [b+1]_q \right)\,_2\phi_1(q^a,q^b;q^c;q,x) d_qx =\nonumber \\& x^c (\frac{x}{q};q)_{a+b+1-c}  \left(  \,_2\phi_1(q^a,q^b;q^c;q,x)-\frac{[a]_q [b]_q x}{ [c]_q} \,_2\phi_1(q^{a+1},q^{b+1};q^{c+1};q,\frac{x}{q})\right),
\end{align}
\begin{align}\label{2147hgf}
  &\int x^{c} (x;q)_{a+b+1-c} (- \alpha(1-q)x  ;q)_\infty T_\alpha(x) \,_2\phi_1(q^a,q^b;q^c;q,x) d_qx =\nonumber \\& \frac{x^c}{q^c} (\frac{x}{q};q)_{a+b+1-c}(-\alpha (1-q) x ;q)_\infty \left(\,_2\phi_1(q^a,q^b;q^c;q,x)- \,_2\phi_1(q^{a+1},q^{b+1};q^{c+1};q,\frac{x}{q})\right),
\end{align}
and
\begin{align}\label{2148hgf}
 &\int \frac{x^c (\alpha_1 x;q)_\infty(x;q)_{a+b+1-c}}{(\beta x;q)_\infty}S_{\alpha_1}(x) \,_2\phi_1(q^a,q^b;q^c;q,x) d_qx =  - q^{-c}(1-q)x^c\nonumber \\& \frac{ (\alpha_1 x;q)_\infty (\frac{x}{q};q)_{a+b+1-c}}{(q^{-1} \beta x;q)_\infty} \left( \,_2\phi_1(q^a,q^b;q^c;q,x)-(1-q)(1-\frac{\beta}{q}x)\,_2\phi_1(q^{a+1},q^{b+1};q^{c+1};q,\frac{x}{q}) \right).
\end{align}
\end{theorem}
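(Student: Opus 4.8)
The plan is to apply Theorem~\ref{brevc} with $y(x)=\,_2\phi_1(q^a,q^b;q^c;q,x)$, which satisfies the $q$-hypergeometric $q$-difference equation. First I would write that equation in the form~(\ref{mx}), reading off the coefficients $p(x)$ and $r(x)$: the term $p(x)D_qy$ carries a simple pole at the origin (whose residue produces $[c]_q$) together with a rational part coming from the factor $(1-q^{a+b-c}x)$. Solving the first-order equation~(\ref{frht5}), $D_qf=p\,f$, then gives (up to a constant) the integrating factor $f(x)=x^{c}(x/q;q)_{a+b+1-c}$, which is the source of the $q$-shifted-factorial weights $(x;q)_{a+b-c}$ and $(x;q)_{a+b+1-c}$ that decorate all four integrands. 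With these data the master identity~(\ref{mkrl}) becomes
\begin{align*}
\int f(x)(L_q h)(x)\,_2\phi_1(q^a,q^b;q^c;q,x)\,d_qx = f(x)\Big(\,_2\phi_1(q^a,q^b;q^c;q,x)D_{q^{-1}}h(x)-h(x)D_{q^{-1}}\,_2\phi_1(q^a,q^b;q^c;q,x)\Big),
\end{align*}
where $(L_qh)=\frac1q D_{q^{-1}}D_qh+p\,D_qh+r\,h$, and the whole theorem reduces to choosing $h$.

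The second ingredient is the $q$-derivative (contiguity) relation $D_q\,_2\phi_1(q^a,q^b;q^c;q,x)=\alpha\,\,_2\phi_1(q^{a+1},q^{b+1};q^{c+1};q,x)$ with $\alpha=[a]_q[b]_q/[c]_q$, so that $D_{q^{-1}}\,_2\phi_1(q^a,q^b;q^c;q,x)=\alpha\,\,_2\phi_1(q^{a+1},q^{b+1};q^{c+1};q,x/q)$; this is exactly what turns the $D_{q^{-1}}y$ term on the right-hand side into the shifted series $\,_2\phi_1(q^{a+1},q^{b+1};q^{c+1};q,x/q)$ seen in every display. I would then feed in successive choices of $h$. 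Taking $h(x)=x$, so $D_{q^{-1}}x=1$, produces~(\ref{2146hgf}) after the Pochhammer shift $(x/q;q)_{a+b+1-c}=(1-x/q)(x;q)_{a+b-c}$; equation~(\ref{2145hgf}) is the accompanying clean antiderivative, quickest to verify directly by applying $D_q$ to its right-hand side and using the $q$-product rule together with the contiguity relation, exactly as~(\ref{ghabg}) played that role in Theorem~\ref{thmn1}. For~(\ref{2147hgf}) I would take $h$ proportional to the $q$-exponential $(-\alpha(1-q)x;q)_\infty$, which satisfies $D_qh=\tfrac{\alpha}{1+\alpha(1-q)x}h$ and hence $D_{q^{-1}}h=\alpha h$; the first term of $T_\alpha(x)$ is precisely the logarithmic $q$-derivative $D_qh/h$, while the second term, carrying $[a+b+1]_q$ and $(1-q^{a+b-c}x)^{-1}$, is the contribution of $p\,D_qh$, so that $(L_qh)$ collapses to $(-\alpha(1-q)x;q)_\infty T_\alpha(x)$ up to the Pochhammer shift. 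Finally~(\ref{2148hgf}) follows from the $q$-Pochhammer ratio $h(x)=(\alpha_1 x;q)_\infty/(\beta x;q)_\infty$, for which $D_qh=\tfrac{\alpha}{(1-q)(1-\alpha_1 x)}h$ (using $\beta-\alpha_1=\alpha$), and $(L_qh)$ reduces to the rational factor $S_{\alpha_1}(x)$ times $h$; here $\beta=[a+b+1]_q/[c]_q$ and $\alpha_1=\beta-\alpha$ enter naturally through the two $q$-shifts $(\alpha_1 x;q)_\infty$ and $(\beta x;q)_\infty$.

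The main obstacle is not the Lagrangian step itself: once $p$, $r$, $f$ and the contiguity relation are fixed, each right-hand side is immediate from~(\ref{mkrl}), since $D_{q^{-1}}h=\alpha h$ (respectively the ratio's logarithmic derivative) makes the two bracketed terms share a common prefactor and assemble into the stated combination of $\,_2\phi_1(q^a,q^b;q^c;q,x)$ and $\,_2\phi_1(q^{a+1},q^{b+1};q^{c+1};q,x/q)$. The real work is the explicit evaluation of $(L_qh)(x)$ for the $q$-exponential and the $q$-Pochhammer-ratio choices, and the attendant bookkeeping of $q$-shifted factorials: one must show that the three operator terms recombine into exactly $(-\alpha(1-q)x;q)_\infty T_\alpha(x)$ and $\tfrac{(\alpha_1 x;q)_\infty}{(\beta x;q)_\infty}S_{\alpha_1}(x)$, and repeatedly convert between $(x;q)_{a+b-c}$, $(x/q;q)_{a+b+1-c}$ and the infinite products $(-\alpha(1-q)x;q)_\infty$, $(\beta x;q)_\infty$ under $x\mapsto x/q$ (the ratio $\tfrac{(x;q)_{a+b+1-c}}{(x/q;q)_{a+b+1-c}}=\tfrac{1-xq^{a+b-c}}{1-x/q}$ is the recurring identity). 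Keeping the $q$-powers and the constants $\alpha,\beta,\alpha_1$ straight through these reductions is where all the care is needed.
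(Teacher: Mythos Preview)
Your plan is correct in spirit and uses the same Lagrangian machinery, the same choices of $h$ (constant/linear, $q$-exponential, $q$-Pochhammer ratio), and the same contiguity relation $D_{q^{-1}}\,_2\phi_1(q^a,q^b;q^c;q,x)=\alpha\,_2\phi_1(q^{a+1},q^{b+1};q^{c+1};q,x/q)$ as the paper. There are, however, three concrete differences worth noting. First, the paper casts the $q$-hypergeometric equation in the form~(\ref{mxlk}) with $D_{q^{-1}}y$ and applies Theorem~\ref{bvc}, not Theorem~\ref{brevc}; the integrating factor is then $f(x)=x^c(x;q)_{a+b+1-c}$, and the prefactor $(x/q;q)_{a+b+1-c}$ on the right arises from the $f(x/q)$ in~(\ref{mkl}) rather than from $f$ itself. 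Second, for~(\ref{2145hgf}) the paper does not verify directly: it simply takes $h(x)=1$, so that $(L_qh)(x)=r(x)$ and the left side becomes exactly the stated integrand, keeping all four identities under the single master formula. Third, for~(\ref{2147hgf}) and~(\ref{2148hgf}) the paper does not compute $(L_qh)$ head-on; instead it \emph{defines} $h$ as the solution of a first-order $q$-difference equation chosen to annihilate selected pieces of the operator (for~(\ref{2147hgf}) the piece $\tfrac{[c]_q}{x(q^c-q^{a+b}x)}D_{q^{-1}}h+r(x)h=0$, for~(\ref{2148hgf}) the full $p\,D_{q^{-1}}h+r\,h=0$), which both explains why those particular $h$ arise and makes the residual computation of $(L_qh)$ shorter than the bookkeeping you anticipate.
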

\begin{proof}
The $q$-hypergeometric functions $\,_2\phi_1(q^a,q^b;q^c;q,x)$  satisfies the second-order $q$-difference equation \cite{rahman}
\begin{align}\label{kjhgfd}
&\frac{1}{q}  D_{q^{-1}} D_q y(x) + \dfrac{[c]_q - [a+b+1]_q \frac{x}{q}}{ x  (q^c - q^{a+b}x)} D_{q^{-1}} y(x)- \dfrac{[a]_q [b]_q}{x (q^c-q^{a+b}x)} y(x) =0.
\end{align}
 By comparing  Equation (${\ref{kjhgfd}}$) with Equation (${\ref{mxlk}}$),  we get
\begin{align*}
 p(x)= \dfrac{(1-q^c) - (1-q^{a+b+1}) \frac{x}{q}}{ x (1-q) (q^c - q^{a+b}x)} , \quad r(x) = - \dfrac{(1-q^a)(1-q^b)}{x(1-q)^2 (q^c-q^{a+b}x)}.
\end{align*}
If $f(x)$ satisfies Equation (${\ref{fht}}$), then
\begin{align*}
 f(x)  = x^c (x;q)_{a+b+1-c}.
\end{align*}
The proof of Equation (${\ref{2145hgf}}$), follows by
substituting  with $h (x) =1$ into Equation (${\ref{mkl}}$), and using
\begin{align}\label{32514}
  D_{q^{-1}} \,_2\phi_1(q^a,q^b;q^c;q,x) = \frac{(1-q^a)(1-q^b)}{(1-q^c)(1-q)}\,_2\phi_1(q^{a+1},q^{b+1};q^{c+1};q,\frac{x}{q}).
\end{align}
 Equation (${\ref{2146hgf}}$) follows by
substituting with $h (x) =x$ into Equation (${\ref{mkl}}$).
 Equation (${\ref{2147hgf}}$) follows by
taking  $h (x)$  to be a  solution of
\begin{align*}
  \dfrac{(1-q^c)}{ x (1-q) (q^c - q^{a+b}x)} D_{q^{-1}} h(x)- \dfrac{(1-q^a)(1-q^b)}{x(1-q)^2 (q^c-q^{a+b}x)} h(x) =0,
\end{align*}
I.e.
\begin{align*}
h(x) = (- \alpha(1-q)x ;q)_\infty, \quad\quad \alpha = \frac{[a]_q [b]_q}{[c]_q}.
\end{align*}
 Finally, Equation (${\ref{2148hgf}}$) follows by
taking $h (x)$ to be a solution of
\begin{align*}
  \dfrac{(1-q^c) - (1-q^{a+b+1}) \frac{x}{q}}{ x (1-q) (q^c - q^{a+b}x)} D_{q^{-1}} h(x)- \dfrac{(1-q^a)(1-q^b)}{x(1-q)^2 (q^c-q^{a+b}x)}h(x) =0.
\end{align*}
Thus,
\begin{align*}
h(x) = \dfrac{(\alpha_1 x;q)_\infty}{(\beta x;q)_\infty}\quad \quad \beta=\frac{[a+b+1]_q}{[c]_q}, \quad \alpha_1=\beta-\alpha.
\end{align*}
\end{proof}
\begin{theorem}\label{55}
  If $ p_n(x;a,b;q)$ is the big $q$-Laguerre  polynomial of degree $n$, then
  \begin{align}\label{254}
  &\int \dfrac{(\frac{x}{b},\frac{x}{a};q)_\infty }{(x ;q)_\infty}p_n(x;a,b;q) d_qx = \dfrac{abq^2 (1-q) (\frac{x}{bq},\frac{x}{aq};q)_\infty }{(1-aq)(1-bq)(x ;q)_\infty} p_{n-1}(x;aq,bq;q),
\end{align}
and
\begin{align}\label{desf2gd}
  & \int \dfrac{x^n(\frac{x}{b},\frac{x}{a};q)_\infty }{(qx ;q)_\infty}\left(\frac{[n-1]_q}{x^2}-\frac{(a+b-qab)}{abqx(1-q)(1-x)}\right)p_n(x;a,b;q) d_qx =\nonumber\\&
 q^{-n} x^n \dfrac{(\frac{x}{bq},\frac{x}{aq};q)_\infty }{(x ;q)_\infty}\left(\frac{q^n p_n(\frac{x}{q};a,b;q)}{x}-\frac{p_{n-1}(x;aq,bq;q)}{(1-aq)(1-bq)}\right).
\end{align}
\end{theorem}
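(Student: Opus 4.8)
The plan is to realize the big $q$-Laguerre polynomial as an eigenfunction of a homogeneous operator of the type in (\ref{mxlk}), and then to feed the two elementary choices $h(x)=1$ and $h(x)=x^n$ into the master identity (\ref{mkl}). First I would recall the second-order $q$-difference equation satisfied by $p_n(x;a,b;q)={}_3\phi_2(q^{-n},0,x;aq,bq;q,q)$ and rewrite it in the normal form (\ref{mxlk}); matching coefficients of $y(qx)$, $y(x)$, $y(x/q)$ gives
\[
p(x)=\frac{1}{(1-q)(1-x)}\Bigl(1-\frac{1}{aq}-\frac{1}{bq}+\frac{x}{abq^2}\Bigr),\qquad r(x)=-\frac{q^{-n-1}(1-q^n)}{ab(1-q)^2(1-x)}.
\]
Solving (\ref{fht}), $\tfrac1q D_{q^{-1}}f=p(x)f$, for the integrating factor then yields
\[
f(x)=\frac{(x/a,x/b;q)_\infty}{(qx;q)_\infty},
\]
which I would verify directly from the $q$-derivative of an infinite product together with the telescoping rule $(c;q)_\infty=(1-c)(cq;q)_\infty$. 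Specializing (\ref{mkl}) to this $f$ and to $y=p_n$ produces the working identity to be specialized.

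For (\ref{254}) I would set $h(x)=1$, so the differential-operator factor in (\ref{mkl}) collapses to $r(x)$ and the left side becomes $\int f(x)r(x)p_n(x;a,b;q)\,d_qx$. Writing $r(x)=r_0/(1-x)$ with $r_0=-q^{-n-1}(1-q^n)/(ab(1-q)^2)$ and using $(x;q)_\infty=(1-x)(qx;q)_\infty$ collapses $f(x)r(x)$ to $r_0\,(x/a,x/b;q)_\infty/(x;q)_\infty$, which is exactly the integrand weight in (\ref{254}). On the right I would invoke the forward-shift (lowering) relation
\[
D_{q^{-1}}p_n(x;a,b;q)=\frac{q^{1-n}(1-q^n)}{(1-q)(1-aq)(1-bq)}\,p_{n-1}(x;aq,bq;q),
\]
substitute $f(x/q)=(x/aq,x/bq;q)_\infty/(x;q)_\infty$ read off directly from the definition of $f$, and divide through by $r_0$; the surviving constants assemble into the prefactor $abq^2(1-q)/((1-aq)(1-bq))$, which is (\ref{254}).

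Finally, for (\ref{desf2gd}) I would take $h(x)=x^n$ and compute $(Lx^n)=q^{1-n}[n]_q[n-1]_q\,x^{n-2}+p(x)\,q^{1-n}[n]_q\,x^{n-1}+r(x)x^n$, using $D_qx^n=[n]_q x^{n-1}$ and $D_{q^{-1}}x^n=q^{1-n}[n]_q x^{n-1}$. The least routine step is to combine the two terms carrying the factor $1/(1-x)$: after extracting $q^{1-n}[n]_q\,x^{n-1}/((1-q)(1-x))$ the residual $x/(abq^2)$ coming from $p(x)D_{q^{-1}}x^n$ is cancelled identically by the $x/(abq^2)$ coming from $r(x)x^n$, leaving
\[
(Lx^n)=q^{1-n}[n]_q\Bigl([n-1]_q\,x^{n-2}-\frac{(a+b-qab)\,x^{n-1}}{abq(1-q)(1-x)}\Bigr),
\]
i.e.\ exactly $q^{1-n}[n]_q$ times the bracket in (\ref{desf2gd}). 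Dividing by the common factor $q^{1-n}[n]_q$ and simplifying the right side of (\ref{mkl}), where the two pieces $p_n(x/q)D_{q^{-1}}x^n$ and $(x/q)^nD_{q^{-1}}p_n(x)$ recombine through the same forward-shift relation into $q^{-n}x^n f(x/q)\bigl(q^n p_n(x/q)/x-p_{n-1}(x;aq,bq;q)/((1-aq)(1-bq))\bigr)$, produces (\ref{desf2gd}). I expect this cancellation in $(Lx^n)$ and the matching of the $q$-shifted-factorial weights under the parameter shifts $a\mapsto aq$, $b\mapsto bq$ to be the only delicate points; everything else is bookkeeping of $q$-integers and infinite $q$-products.
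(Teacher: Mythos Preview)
Your proposal is correct and follows essentially the same route as the paper: identify $p(x)$, $r(x)$, and the integrating factor $f(x)=(x/a,x/b;q)_\infty/(qx;q)_\infty$ from the big $q$-Laguerre $q$-difference equation, then specialize (\ref{mkl}) with $h$ constant for (\ref{254}) and $h(x)=x^n$ for (\ref{desf2gd}), using the forward-shift relation for $D_{q^{-1}}p_n$. The only cosmetic difference is that the paper \emph{motivates} the choice $h(x)=x^n$ by asking that $h$ annihilate the auxiliary first-order equation $\tfrac{x}{abq^2(1-q)(1-x)}D_{q^{-1}}h-\tfrac{q^{-n-1}[n]_q}{ab(1-q)(1-x)}h=0$, whereas you pick $h(x)=x^n$ directly and then observe the same cancellation of the $x/(abq^2)$ pieces; these are two ways of saying the same thing.
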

\begin{proof}
The big $q$-Laguerre  polynomial $ p_n(x;a,b;q)= \,_3\phi_2\left( \begin{array}{cccc}
                     q^{-n} , 0,x \\
                    aq,bq
                   \end{array}
\mid q;q\right) $  satisfies the second-order $q$-difference equation, see \cite[Eq.(3.11.5)]{askey},
\begin{align}\label{kjhgf}
    \frac{1}{q} D_{q^{-1}} D_qy(x)+  \dfrac{ x-q(a+b-qab)}{ab q^2  (1-q)(1-x)}D_{q^{-1}}y(x) - \frac{ q^{-n-1} [n]_q}{ab(1-q)(1-x)} y(x) =0.
\end{align}
  By comparing  Equation (${\ref{kjhgf}}$) with Equation (${\ref{mxlk}}$), we get
  \begin{align*}
    p(x)= \dfrac{ x-q(a+b-qab)}{ab q^2  (1-q)(1-x)}\quad \quad r(x) =  - \frac{ q^{-n-1} [n]_q}{ab(1-q)(1-x)} .
  \end{align*}
Thus $f(x) =  \frac{(\frac{x}{b},\frac{x}{a};q)_\infty }{(qx;q)_\infty}$. The proof of (${\ref{254}}$) follows by
substituting with $h (x) =\frac{abq^{n+1}(1-q)}{[n]_q}$ into Equation (${\ref{mkl}}$) and using  \cite[Eq.(3.11.7)]{askey} (with $x$ is replaced by $\frac{x}{q}$)
\begin{align}\label{246}
  D_{q^{-1}}p_n(x;a,b;q)=\dfrac{q^{1-n}[n]_q}{(1-aq)(1-bq)}p_{n-1}(x;aq,bq;q).
\end{align}
The proof of (${\ref{desf2gd}}$) follows by taking
 $h(x)$ as the solution of
\begin{align*}
  \dfrac{x}{ab q^2  (1-q)(1-x)}D_{q^{-1}}h(x) - \frac{ q^{-n-1} [n]_q}{ab(1-q)(1-x)} h(x) =0,
\end{align*}
which gives $ h(x) = x^n$.
\end{proof}
\begin{example}

 If $a=1$ in Equation (${\ref{254}}$), then we get
  \begin{align*}
   &\int (\frac{x}{b};q)_\infty p_n(x;1,b;q) d_qx =  \dfrac{bq^2}{ 1-bq}(1-\frac{x}{q})(\frac{x}{bq};q)_\infty p_{n-1}(x;q,bq;q).
\end{align*}
\end{example}
\begin{remark}
  Equation (${\ref{254}}$) is equivalent to \cite[Eq.(3.11.9)]{askey} (with $n$ is replaced by $n-1$ )
\begin{align*}
  D_q\left( w(x;aq,bq;q)p_{n-1}(x;aq,bq;q) \right) = \frac{(1-aq)(1-bq)}{abq^2(1-q)}w(x;a,b;q)p_{n}(x;a,b;q),
\end{align*}
where $ w(x;q)= \frac{(\frac{x}{b},\frac{x}{a};q)_\infty }{(x;q)_\infty}$ .
\end{remark}
\begin{proposition}
If $n$ and $m$ are non-negative integers, then
  \begin{align*}
    &\int \dfrac{(\frac{x}{b},\frac{x}{a};q)_\infty }{(x ;q)_\infty}\left(q^{-m}[m]_q-q^{-n}[n]_q\right)p_m(x;a,b;q)p_n(x;a,b;q) d_qx = \dfrac{abq^2 (1-q) (\frac{x}{bq},\frac{x}{aq};q)_\infty }{(1-aq)(1-bq)(x ;q)_\infty}\\&\left(q^{-m} [m]_q p_{m-1}(x;aq,bq;q)p_n(\frac{x}{q};a,b;q)-q^{-n} [n]_q p_m(\frac{x}{q};a,b;q) p_{n-1}(x;aq,bq;q)\right).
  \end{align*}
\end{proposition}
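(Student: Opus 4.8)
The plan is to apply the master identity $(\ref{mkl})$ of Theorem $\ref{bvc}$ to a pair of big $q$-Laguerre polynomials, taking $y(x)=p_n(x;a,b;q)$ as the solution and $h(x)=p_m(x;a,b;q)$ as the arbitrary function. The starting point is the observation, already exploited in the proof of Theorem $\ref{55}$, that $p_n$ and $p_m$ both satisfy the big $q$-Laguerre equation $(\ref{kjhgf})$ with the \emph{same} first-order coefficient $p(x)=\frac{x-q(a+b-qab)}{abq^{2}(1-q)(1-x)}$ and the same weight $f(x)=\frac{(\frac{x}{b},\frac{x}{a};q)_\infty}{(qx;q)_\infty}$, while the zeroth-order coefficient $r(x)=-\frac{q^{-k-1}[k]_q}{ab(1-q)(1-x)}$ depends on the degree $k$; write $r_n$, $r_m$ for the two values.

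First I would note that the operator in $(\ref{mkl})$ is built from the equation satisfied by $y=p_n$, so it carries $r_n$, whereas $h=p_m$ solves its own equation with $r_m$. Subtracting the two equations gives
\[
\frac{1}{q}D_{q^{-1}}D_q p_m(x)+p(x)D_{q^{-1}}p_m(x)+r_n(x)p_m(x)=\bigl(r_n(x)-r_m(x)\bigr)p_m(x),
\]
and a direct computation yields
\[
r_n(x)-r_m(x)=\frac{q^{-m-1}[m]_q-q^{-n-1}[n]_q}{ab(1-q)(1-x)}=\frac{q^{-1}\bigl(q^{-m}[m]_q-q^{-n}[n]_q\bigr)}{ab(1-q)(1-x)}.
\]
This is the crux of the argument: a naive reading would suggest the operator annihilates $p_m$, but the degree mismatch between $y$ and $h$ produces precisely the scalar weight $q^{-m}[m]_q-q^{-n}[n]_q$ appearing inside the integral of the claim. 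Multiplying by $f(x)$ and using $(x;q)_\infty=(1-x)(qx;q)_\infty$ to collapse $\frac{1}{(qx;q)_\infty(1-x)}$ into $\frac{1}{(x;q)_\infty}$, the left-hand side of $(\ref{mkl})$ becomes
\[
\frac{q^{-1}}{ab(1-q)}\int\frac{(\frac{x}{b},\frac{x}{a};q)_\infty}{(x;q)_\infty}\bigl(q^{-m}[m]_q-q^{-n}[n]_q\bigr)p_m(x;a,b;q)\,p_n(x;a,b;q)\,d_qx,
\]
which is the integrand of the proposition up to the constant prefactor $\frac{q^{-1}}{ab(1-q)}$.

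Finally I would evaluate the right-hand side $f(\tfrac{x}{q})\bigl(p_n(\tfrac{x}{q})D_{q^{-1}}p_m(x)-p_m(\tfrac{x}{q})D_{q^{-1}}p_n(x)\bigr)$ of $(\ref{mkl})$, where $f(\tfrac{x}{q})=\frac{(\frac{x}{bq},\frac{x}{aq};q)_\infty}{(x;q)_\infty}$. Invoking the derivative formula $(\ref{246})$ in the form $D_{q^{-1}}p_k(x;a,b;q)=\frac{q^{1-k}[k]_q}{(1-aq)(1-bq)}p_{k-1}(x;aq,bq;q)$ for $k=m,n$, and then clearing the common constant by multiplying through by $abq(1-q)$, the powers of $q$ match since $abq(1-q)\,q^{1-m}[m]_q=abq^{2}(1-q)\,q^{-m}[m]_q$, and the stated prefactor $\frac{abq^{2}(1-q)(\frac{x}{bq},\frac{x}{aq};q)_\infty}{(1-aq)(1-bq)(x;q)_\infty}$ is recovered together with the correct combination of shifted polynomials. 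The remaining work is routine bookkeeping with $q$-shifted factorials; the only genuine obstacle is the careful handling of the degree-dependent coefficient $r(x)$ described above, exactly as in the analogous two-parameter identity for $J_\nu^{(3)}$ and $J_\mu^{(3)}$ proved earlier.
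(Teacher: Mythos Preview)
Your proposal is correct and follows exactly the approach indicated in the paper: substitute $h(x)=p_m(x;a,b;q)$ and $y(x)=p_n(x;a,b;q)$ into the master identity $(\ref{mkl})$, using the data $p(x)$, $r(x)$, $f(x)$ already identified in Theorem~$\ref{55}$ and the derivative rule $(\ref{246})$. The paper records this proof in a single line; you have simply (and accurately) unpacked the computation of $r_n-r_m$ and the constant bookkeeping.
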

\begin{proof}
  The proof follows by sbstituting with  $h(x)=p_m(x;a,b;q)$ and $y(x) =p_n(x;a,b;q) $ in (${\ref{mkl}}$).
\end{proof}
\begin{theorem}\label{56}
 If $ L_n^{\alpha}(x;q)$ is the  $q$-Laguerre  polynomial of degree $n$ and $\mu = 1-\frac{\ln(1+q-q^n)}{\ln q}$,  then
 \begin{align}\label{214dsbg}
  &\int \dfrac{x^{m+\alpha} }{(-x;q)_\infty}\left(\frac{(1-q^{m+\alpha})[m]_q}{x}+q^{m+\alpha}([n]_q-[m]_q)\right)L_n^{\alpha}(x;q) d_qx \nonumber \\&= \dfrac{ x^{m+\alpha+1}}{(-x;q)_\infty}\left( \frac{(1-q^m)}{x}L_n^{\alpha}(\frac{x}{q};q)+ q^{\alpha} L_{n-1}^{\alpha+1}(x;q)\right),
\end{align}
\begin{align}\label{5548}
&\int \frac{x^{\alpha} (q^{\alpha+2}(1-q^n)x;q)_\infty}{(-qx;q)_\infty}  L_n^{\alpha}(x;q) d_qx=\nonumber\\&\frac{x^{\alpha+1} (q^{\alpha+1}(1-q^n)x;q)_\infty}{q^{\alpha+1}(-x;q)_\infty} \left(\frac{1-q^{\alpha}(1-q^n)x}{[n]_q}L_{n-1}^{\alpha+1}(x;q)  - (1-q)L_n^{\alpha}(\frac{x}{q};q)\right),
\end{align}
and
\begin{align}\label{jnhbvfddds}
 &\int \dfrac{x^{\alpha+\mu-1} }{(-qx;q)_\infty}\left(q[\mu-1]_q+\frac{[\alpha+1]_q}{q^{\alpha}(1+x)}\right)L_n^{\alpha}(x;q) d_qx \nonumber \\&= \dfrac{ x^{\alpha+\mu+1}}{(-x;q)_\infty}\left( \frac{L_n^{\alpha}(\frac{x}{q};q)}{q^{\alpha}x}-  \frac{L_{n-1}^{\alpha+1}(x;q)}{1-q^\mu}\right).
\end{align}
\end{theorem}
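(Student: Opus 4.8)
The plan is to apply the master identity (\ref{mkl}) of Theorem~\ref{bvc} with $y(x)=L_n^{\alpha}(x;q)$, following the same two-stage scheme used for the big $q$-Laguerre polynomials in Theorem~\ref{55}. The first step is to record the homogeneous second-order $q$-difference equation satisfied by the $q$-Laguerre polynomial $L_n^{\alpha}(x;q)$ (see \cite{askey}) and rewrite it in the normal form (\ref{mxlk}), thereby reading off the coefficients $p(x)$ and $r(x)$. Next I would solve the first-order $q$-difference equation (\ref{fht}), namely $\frac1q D_{q^{-1}}f(x)=p(x)f(x)$; the solution is the $q$-Laguerre weight up to normalization, of the form $f(x)=\dfrac{x^{\alpha+1}}{(-x;q)_\infty}$ (the precise Pochhammer shift $(-x;q)_\infty$ versus $(-qx;q)_\infty=(1+x)^{-1}(-x;q)_\infty$ being settled by matching against the stated left-hand sides). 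Substituting this $f$ and $y=L_n^{\alpha}$ into (\ref{mkl}) yields a master integral identity in which $h(x)$ is still free, and each of the three asserted formulas corresponds to one admissible choice of $h$.

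For (\ref{214dsbg}) I would take the monomial $h(x)=x^m$. Computing the operator $\frac1q D_{q^{-1}}D_q h+p\,D_{q^{-1}}h+r\,h$ on $x^m$ and multiplying by $f$ reproduces the bracketed integrand on the left, provided the $\tfrac{1}{1+x}$-type contributions coming from $p$ and $r$ cancel on monomials (this cancellation is exactly what leaves the clean rational coefficient $\tfrac{(1-q^{m+\alpha})[m]_q}{x}+q^{m+\alpha}([n]_q-[m]_q)$). On the right-hand side the Wronskian-type term $f(x/q)(y(x/q)D_{q^{-1}}h(x)-h(x/q)D_{q^{-1}}y(x))$ is simplified using the backward-difference lowering relation for $q$-Laguerre polynomials, $D_{q^{-1}}L_n^{\alpha}(x;q)=c_{n,\alpha}\,L_{n-1}^{\alpha+1}(x;q)$, which is the analogue of (\ref{246}) (see \cite{askey}); this converts $D_{q^{-1}}y$ into $L_{n-1}^{\alpha+1}$ and produces the stated right-hand side.

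For (\ref{5548}) and (\ref{jnhbvfddds}) the idea, mirroring the second half of the proof of Theorem~\ref{55}, is to choose $h$ not as a monomial but as a solution of a \emph{reduced} first-order $q$-difference equation obtained by retaining only part of the coefficient of $D_{q^{-1}}h$ together with the $r\,h$ term, so that the corresponding piece of the integrand collapses. For (\ref{5548}) this reduced equation has the infinite-product solution $h(x)=(q^{\alpha+2}(1-q^n)x;q)_\infty$, whose quotient structure accounts for both the shift $(-x;q)_\infty\to(-qx;q)_\infty$ and the extra Pochhammer factor on the left. For (\ref{jnhbvfddds}) the reduced equation is solved by the pure power $h(x)=x^{\mu}$, and the exponent $\mu=1-\frac{\ln(1+q-q^n)}{\ln q}$ is precisely the value that makes $x^{\mu}$ a solution, since it forces $q^{\mu-1}(1+q-q^n)=1$; the surviving $\frac1q D_{q^{-1}}D_q h$ term then supplies the coefficient $q[\mu-1]_q$ while the residual part of $p\,D_{q^{-1}}h$ supplies $\frac{[\alpha+1]_q}{q^{\alpha}(1+x)}$. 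In each case the right-hand Wronskian term is then evaluated and the same lowering relation for $D_{q^{-1}}L_n^{\alpha}$ is applied to obtain $L_{n-1}^{\alpha+1}$.

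I anticipate the main obstacle to be twofold. First, correctly solving the three first-order $q$-difference equations, i.e. recognizing that the admissible $h$'s are the infinite product for (\ref{5548}) and the finely tuned power $x^{\mu}$ for (\ref{jnhbvfddds}) rather than a routine monomial, and splitting the coefficient of $D_{q^{-1}}h$ in the right way so that exactly one term is annihilated. Second, the bookkeeping of $q$-powers and Pochhammer shifts when simplifying $(Lh)(x)$ and the boundary Wronskian term, where the factors $(-x;q)_\infty$ and $(-qx;q)_\infty=(1+x)^{-1}(-x;q)_\infty$ must be tracked so that the left- and right-hand weights match exactly; the polynomial identities themselves are routine once the correct $h$ and the lowering relation are in hand.
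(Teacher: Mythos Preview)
Your plan is exactly the paper's proof: it identifies $p,r$ from the $q$-Laguerre equation, solves (\ref{fht}) to get $f(x)=\dfrac{x^{\alpha+1}}{(-qx;q)_\infty}$, and then applies (\ref{mkl}) with the three choices $h(x)=x^m$, $h(x)=(q^{\alpha+1}(1-q^n)x;q)_\infty$, and $h(x)=x^{\mu}$, each time invoking the lowering relation $D_{q^{-1}}L_n^{\alpha}(x;q)=-\dfrac{q^{\alpha+1}}{1-q}L_{n-1}^{\alpha+1}(x;q)$. Your anticipated bookkeeping obstacles are real but routine; note only that the correct weight carries $(-qx;q)_\infty$ (not $(-x;q)_\infty$) and the product solution for (\ref{5548}) is $h(x)=(q^{\alpha+1}(1-q^n)x;q)_\infty$ rather than the $q^{\alpha+2}$ shift you read off from the integrand.
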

\begin{proof}
The  $q$-Laguerre  polynomial
\begin{align*}
  L_n^{\alpha}(x;q)= \frac{1}{(q;q)_n}\,_2\phi_1\left( \begin{array}{cccc}
                     q^{-n} ,-x \\
                    0
                   \end{array}
\mid q;q^{n+\alpha+1}\right),  \quad  \quad  \alpha > -1
\end{align*}
   satisfies the second-order $q$-difference equation, see \cite[Eq.(3.21.6)]{askey},
\begin{align}\label{kjdhg}
    \frac{1}{q} D_{q^{-1}} D_qy(x)+  \dfrac{ 1-q^{\alpha+1}(1+x)}{q^{\alpha+1}x(1+x)(1-q)}D_{q^{-1}}y(x) + \frac{ [n]_q}{x(1-q)(1+x)} y(x) =0.
\end{align}

  By comparing  Equation (${\ref{kjdhg}}$) with Equation (${\ref{mxlk}}$), we obtain
  \begin{align*}
    p(x)= \dfrac{ 1-q^{\alpha+1}(1+x)}{q^{\alpha+1}x(1+x)(1-q)},\quad \quad r(x) = \frac{ [n]_q}{x(1-q)(1+x)}.
  \end{align*}
Hence $ f(x) =  \dfrac{x^{\alpha+1} }{(-qx;q)_\infty}$
is a solution of Equation ({\ref{fht}}). The proof of (${\ref{214dsbg}}$) follows by
substituting  with $h (x) =x^m$ into Equation (${\ref{mkl}}$), and using  \cite[Eq.(3.21.8)]{askey} (with $x$ is replaced by $\frac{x}{q}$ )
\begin{align}\label{4587}
  D_{q^{-1}}L_n^{\alpha}(x;q)=\dfrac{- q^{\alpha+1} }{(1-q)}L_{n-1}^{\alpha+1}(x;q).
\end{align}
Equation (${\ref{5548}}$) follows by taking $h(x)$ as the  solution of
\begin{align*}
  \dfrac{ 1}{q^{\alpha+1}(1-q) x(1+x)}D_{q^{-1}}h(x) + \frac{ [n]_q}{x(1-q)(1+x)} h(x) =0,
\end{align*}
which gives $ h(x) = (q^{\alpha+1}(1-q^n)x;q)_\infty$.
Equation  (${\ref{jnhbvfddds}}$) follows by taking $h(x)$ as the  solution of
\begin{align*}
  \dfrac{ -1}{(1-q)(1+x)}D_{q^{-1}}h(x) + \frac{ [n]_q}{x(1-q)(1+x)} h(x) =0,
\end{align*}
which gives $ h(x) = x^{\mu}$, $\mu = 1-\frac{\ln(1+q-q^n)}{\ln q}$.
\end{proof}
\begin{remark}
  It is worth noting that
   \begin{align*}
  &\int \dfrac{x^\alpha }{(-x;q)_\infty}L_n^{\alpha}(x;q) d_qx = \dfrac{x^{\alpha+1}   }{[n]_q(-x;q)_\infty} L_{n-1}^{\alpha+1}(x;q),
\end{align*}
  which is the case $m=0$ in (${\ref{214dsbg}}$) is equivalent to \cite[Eq.(3.21.10)]{askey} (with $\alpha$ is replaced by $\alpha+1$ and $n$ is replaced by $n-1$ )
\begin{align*}
  D_q\left( w(x;\alpha+1;q) L_{n-1}^{\alpha+1}(x;q) \right) = [n]_q w(x;\alpha;q)L_{n}^{\alpha}(x;q),
\end{align*}
where $ w(x;\alpha;q)= \displaystyle{\frac{x^\alpha}{(-x;q)_\infty}}$.
\end{remark}
\begin{remark}
  If $h(x)$ is the solution of
\begin{align*}
  \dfrac{ -1}{x(1-q)}D_{q^{-1}}h(x) + \frac{ (1-q^n)}{x(1-q)^2(1+x)} h(x) =0,
\end{align*}
then $h(x) = \dfrac{(-x(1+q-q^n);q)_\infty}{(-qx;q)_\infty}$.
If $n=1$, then $h(x)=1+x $, and
\begin{align*}
  &\int \dfrac{x^\alpha }{(-x;q)_\infty} \left(1-q^{\alpha+1}(1+x)\right)d_qx = \frac{x^{\alpha+1}}{(-x;q)_\infty}.
\end{align*}
\end{remark}
\begin{proposition}
  If $m$ and $n$ are non-negative integers, then
  \begin{align*}
  \left([n]_q-[m]_q\right) &\int \dfrac{x^{\alpha} }{(-x;q)_\infty}L_n^{\alpha}(x;q)L_m^{\alpha}(x;q) d_qx \nonumber \\&= \dfrac{ x^{\alpha+1}}{(-x;q)_\infty}\left( L_m^{\alpha}(\frac{x}{q};q)L_{n-1}^{\alpha+1}(x;q)-L_{m-1}^{\alpha+1}(x;q) L_n^{\alpha}(\frac{x}{q};q)\right).
  \end{align*}
\end{proposition}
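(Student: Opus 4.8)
The plan is to invoke Theorem~\ref{bvc} with the choices $y(x)=L_n^{\alpha}(x;q)$ and $h(x)=L_m^{\alpha}(x;q)$, exploiting the fact that both $q$-Laguerre polynomials solve the same second-order $q$-difference equation (\ref{kjdhg}) with identical $p(x)$ but with an $r(x)$ that differs only through the degree-dependent term $[n]_q$ versus $[m]_q$. Writing $r_n(x)=\frac{[n]_q}{x(1-q)(1+x)}$, the chosen solution $y=L_n^{\alpha}$ corresponds to the $r=r_n$ form of (\ref{mxlk}), so the bracketed operator in (\ref{mkl}) is the one built from $r_n$, whereas $h=L_m^{\alpha}$ satisfies (\ref{kjdhg}) with $r=r_m$. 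The associated $f$ is the one already found in Theorem~\ref{56}, namely $f(x)=\dfrac{x^{\alpha+1}}{(-qx;q)_\infty}$, since it depends only on $p(x)$.

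First I would apply (\ref{mkl}). The operator acting on $h=L_m^{\alpha}$ is $\frac1q D_{q^{-1}}D_q L_m^{\alpha}+p\,D_{q^{-1}}L_m^{\alpha}+r_n L_m^{\alpha}$; subtracting the equation that $L_m^{\alpha}$ itself satisfies collapses this to $(r_n-r_m)L_m^{\alpha}=\dfrac{[n]_q-[m]_q}{x(1-q)(1+x)}\,L_m^{\alpha}$. This is the usual Sturm--Liouville cancellation, and it is the step I expect to carry the conceptual weight of the argument: the second-$q$-derivative and first-$q$-derivative contributions drop out, leaving a pure multiplication operator whose coefficient is proportional to $[n]_q-[m]_q$.

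Next I would insert $f(x)$. On the left this yields the weight $f(x)\,(r_n-r_m)=\dfrac{([n]_q-[m]_q)\,x^{\alpha}}{(1-q)(1+x)(-qx;q)_\infty}$, which I would simplify using the $q$-shifted factorial identity $(1+x)(-qx;q)_\infty=(-x;q)_\infty$ to obtain $\dfrac{[n]_q-[m]_q}{1-q}\cdot\dfrac{x^{\alpha}}{(-x;q)_\infty}$, exactly the integrand weight in the statement up to the factor $(1-q)^{-1}$. For the right-hand side I would evaluate $f(x/q)=\dfrac{(x/q)^{\alpha+1}}{(-x;q)_\infty}$ and replace $D_{q^{-1}}L_n^{\alpha}$ and $D_{q^{-1}}L_m^{\alpha}$ via the lowering formula (\ref{4587}), namely $D_{q^{-1}}L_k^{\alpha}(x;q)=\dfrac{-q^{\alpha+1}}{1-q}L_{k-1}^{\alpha+1}(x;q)$.

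The common factor $-q^{\alpha+1}(x/q)^{\alpha+1}=-x^{\alpha+1}$ then combines with the sign to give $\dfrac{x^{\alpha+1}}{(1-q)(-x;q)_\infty}\big(L_m^{\alpha}(\tfrac{x}{q};q)L_{n-1}^{\alpha+1}(x;q)-L_n^{\alpha}(\tfrac{x}{q};q)L_{m-1}^{\alpha+1}(x;q)\big)$. Finally, multiplying both sides of the resulting identity by $1-q$ clears the leftover factor on each side and produces the claimed formula. The only genuine obstacle is bookkeeping: correctly tracking the powers of $q$ generated by $f(x/q)$ and by (\ref{4587}), and spotting the $(-qx;q)_\infty\to(-x;q)_\infty$ collapse. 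The structural content of the proof is entirely the degree-difference cancellation described in the second paragraph, and the remainder is routine $q$-algebra.
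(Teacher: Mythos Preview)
Your proposal is correct and follows exactly the approach indicated in the paper, which simply says to substitute $h(x)=L_m^{\alpha}(x;q)$ and $y(x)=L_n^{\alpha}(x;q)$ into (\ref{mkl}). You have supplied the routine $q$-algebraic details that the paper omits---the Sturm--Liouville cancellation $(r_n-r_m)$, the collapse $(1+x)(-qx;q)_\infty=(-x;q)_\infty$, and the use of (\ref{4587})---and your bookkeeping of the $q$-powers is accurate.
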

\begin{proof}
  The proof follows bysubstituting with  $h(x)=L_m^{\alpha}(x;q)$ and $y(x) = L_n^{\alpha}(x;q) $ in (${\ref{mkl}}$).
\end{proof}
\vskip0.5cm
The Stieltjes and Hamburger moment problem associated with the Stieltjes-Wigert polynomials is indeterminate, and the polynomials are orthogonal to many different weight functions. For example, they are orthogonal to the weight function
\begin{align*}
 w(x)=\frac{-c}{\sqrt{\pi}} e^{c\ln^2 x}\quad  x > 0 \quad and \quad  w(x)=\frac{x^2}{(-x,\frac{-q}{x};q)_\infty},
\end{align*}
see \cite{chrit,askey}.
\begin{theorem}\label{57}
  If $S_n(x;q)$ is  the  Stieltjes-Wigert  polynomial of degree $n$, $c=\frac{1}{2\ln q}$, $\alpha_n= 1-\frac{\ln(1-q^n+q)}{\ln q}$, then
  \begin{align}\label{sum3}
&\int x^{\frac{-1}{2}} e^{c\ln^2 x} S_n(x;q) d_qx=\frac{x^{\frac{3}{2}}e^{c\ln^2 \frac{x}{q}}}{\sqrt{q} [n]_q} S_{n-1}(qx;q),
\end{align}
\begin{align}\label{214gg}
 &\int x^{m-\frac{1}{2}} e^{c\ln^2 x} \left(q^{2m} [n-m]_q+\frac{[m]_q}{x}\right)S_n(x;q) d_qx \nonumber\\&=q^{\frac{-1}{2}}
  x^{m+\frac{3}{2}}  e^{c\ln^2 \frac{x}{q}}\left(\frac{(1-q^m)S_n(\frac{x}{q};q)}{x}+S_{n-1}(qx;q)\right),
\end{align}
\begin{align}\label{lak}
&\int x^{\frac{-1}{2}} e^{c\ln^2 x} (q^2x;q)_\infty\left(-q^{n-1}+(1+q^n)x\right) S_n(x;q) d_qx=\nonumber\\&q^{\frac{-3}{2}}
x^{\frac{3}{2}}  e^{c\ln^2 \frac{x}{q}}(qx;q)_\infty\left((1-q)(1-x)S_{n-1}(qx;q)-(1-q)S_n(\frac{x}{q};q)\right),
\end{align}
\begin{align}\label{214fgdg}
  &\int x^{-\frac{1}{2}} e^{c\ln^2 x} (-q^{n+2}x;q)_\infty  S_n(x;q) d_qx=\nonumber\\&q^{\frac{-1}{2}}(1-q)
x^{\frac{3}{2}}  e^{c\ln^2 \frac{x}{q}}(-q^{n+1}x;q)_\infty\left((1+q^nx)S_{n-1}(qx;q)+q^nS_n(\frac{x}{q};q)\right),
\end{align}
and
\begin{align}\label{sum2}
  &\int x^{\alpha_n-\frac{1}{2}} e^{c\ln^2 x} \left([\alpha_n-1]_q+\frac{1}{qx(1-q)}\right)S_n(x;q) d_qx\nonumber\\&=q^{\frac{-3}{2}}
  x^{\alpha_n+\frac{3}{2}}  e^{c\ln^2 \frac{x}{q}}\left(\frac{S_n(\frac{x}{q};q)}{x}+\frac{S_{n-1}(qx;q)}{(1-q^{\alpha_n})}\right).
\end{align}

\end{theorem}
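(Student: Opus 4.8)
The plan is to reuse the template that produced the $q$-Laguerre and big $q$-Laguerre integrals. First I would take from \cite{askey} the second-order $q$-difference equation satisfied by $S_n(x;q)$ and cast it in the shape of (\ref{mxlk}); comparison then gives $p(x)=\dfrac{1-qx}{q(1-q)x^{2}}$ and $r(x)=\dfrac{[n]_q}{(1-q)x^{2}}$, with the eigenvalue $[n]_q$ sitting inside $r$. The next step is to solve the first-order equation (\ref{fht}), $\tfrac1q D_{q^{-1}}f=p\,f$, whose solution here is the log-normal weight $f(x)=x^{3/2}e^{c\ln^{2}x}$ with $c=\tfrac1{2\ln q}$.

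The one genuinely new ingredient — and the step I expect to be the main obstacle — is verifying this $f$. In all earlier applications $f$ was a $q$-shifted factorial, whereas here it is a true Gaussian in $\ln x$. The verification rests on the identity $e^{c\ln^{2}(qx)}=q^{1/2}x\,e^{c\ln^{2}x}$ (equivalently $e^{c\ln^{2}(x/q)}=q^{1/2}x^{-1}e^{c\ln^{2}x}$), which holds exactly because the choice $c=\tfrac1{2\ln q}$ forces $2c\ln q=1$ and $c\ln^{2}q=\tfrac12\ln q$. This turns $f(qx)/f(x)$ into a monomial and makes (\ref{fht}) collapse to the stated $p(x)$; it is also what later converts the boundary weight $f(x/q)$ into the prefactors displayed in the theorem. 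Alongside this I would record the lowering relation $D_{q^{-1}}S_n(x;q)=\tfrac{-q}{1-q}S_{n-1}(qx;q)$ from \cite{askey}, which is the $S_{n-1}(qx;q)$-producing identity analogous to (\ref{4215f}) in the Bessel case.

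With $f$ and the lowering relation available, I would specialize the master identity (\ref{mkl}) of Theorem \ref{bvc} to $S_n$ and insert five choices of $h$. Taking $h(x)=x^{m}$ and replacing $D_{q^{-1}}S_n$ by $\tfrac{-q}{1-q}S_{n-1}(qx;q)$ yields (\ref{214gg}), and (\ref{sum3}) is its $m=0$ specialization. For (\ref{lak}) and (\ref{214fgdg}) I would take $h$ to be a solution of an auxiliary first-order $q$-difference equation obtained by retaining only part of the operator in (\ref{mkl}) (exactly as for the $q$-Laguerre polynomials in Theorem \ref{56}); solving these first-order equations produces the infinite products $(q^{2}x;q)_\infty$ and $(-q^{n+2}x;q)_\infty$ seen in those formulas. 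For (\ref{sum2}) the choice is $h(x)=x^{\alpha_n}$ with $\alpha_n=1-\frac{\ln(1-q^{n}+q)}{\ln q}$, i.e. $q^{\alpha_n}=q/(1-q^{n}+q)$, precisely the exponent that makes $x^{\alpha_n}$ annihilate the relevant first-order part, mirroring the $x^{\mu}$ substitution of Theorem \ref{56}.

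After these substitutions the remaining work is routine: simplify the right-hand side of (\ref{mkl}) with the $q$-product rule, collect the $S_n(x/q;q)$ and $S_{n-1}(qx;q)$ terms, and use $e^{c\ln^{2}(x/q)}=q^{1/2}x^{-1}e^{c\ln^{2}x}$ to recognise the displayed weights. The only bookkeeping point is carrying the correct constants so that the coefficients line up as the stated $q^{2m}[n-m]_q$ and $[m]_q$; since (\ref{mkl}) is homogeneous in $h$, any constant normalization of $h$ is harmless and can be fixed at the end.
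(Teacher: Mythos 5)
Your proposal is correct and follows essentially the same route as the paper: the same $q$-difference equation (3.27.5) giving the same $p$, $r$, the same weight $f(x)=x^{3/2}e^{c\ln^2x}$ (your verification via $e^{c\ln^2(qx)}=q^{1/2}x\,e^{c\ln^2x}$ is exactly the computation needed), the lowering relation (3.27.7), and the same five choices of $h$ — $x^m$, the solutions $(qx;q)_\infty$ and $(-q^{n+1}x;q)_\infty$ of the auxiliary first-order equations, and $x^{\alpha_n}$. The only cosmetic difference is that you obtain (\ref{sum3}) as the $m=0$ case of (\ref{214gg}), whereas the paper takes $h$ to be the constant $\frac{1-q}{[n]_q}$; by homogeneity of (\ref{mkl}) in $h$ these are equivalent.
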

\begin{proof}
  The  Stieltjes-Wigert  polynomials $S_n(x;q)=\frac{1}{(q;q)_n}\,_1\phi_1\left( \begin{array}{cccc}
                     q^{-n}  \\
                    0
                   \end{array}
\mid q;-q^{n+1}x\right) $  satisfies the second-order $q$-difference equation, see \cite[Eq.(3.27.5)]{askey},
\begin{align}\label{kjdshg}
    \frac{1}{q} D_{q^{-1}} D_qy(x)+  \dfrac{ 1-qx}{q x^2 (1-q)}D_{q^{-1}}y(x) + \frac{ [n]_q}{x^2(1-q)} y(x) =0.
\end{align}
By comparing  Equation (${\ref{kjdshg}}$) with Equation (${\ref{mxlk}}$), we get
  \begin{align*}
    p(x)= \dfrac{ 1-qx}{q x^2 (1-q)},\quad \quad \quad r(x) =  \frac{ [n]_q}{x^2(1-q)} .
  \end{align*}
  Thus $f(x) = x^{\frac{3}{2}} e^{c \ln^2x}$,  $(c=\frac{1}{2\ln q})$  is a solution of Equation (${\ref{fht}}$). The proof of (${\ref{sum3}}$) follows by
substituting  with $h (x) =\frac{1-q}{[n]_q}$ into Equation (${\ref{mkl}}$), and using \cite[Eq.(3.27.7)]{askey} (with $x$ is replaced by $\frac{x}{q}$ )
\begin{align}\label{4666vf}
   D_{q^{-1}}S_n(x;q)=\frac{-q}{1-q}S_{n-1}(qx;q).
\end{align}
Substituting  with $h (x) =x^m$ into Equation (${\ref{mkl}}$), and using  (${\ref{4666vf}}$) yields (${\ref{214gg}}$). The  proof of (${\ref{lak}}$) follows by
taking $h(x)$ as a solution of
\begin{align*}
  \dfrac{ 1}{qx^2(1-q)}D_{q^{-1}}h(x) + \frac{ 1}{x^2(1-q)^2} h(x) =0.
\end{align*}
I.e. $ h(x) = (qx;q)_\infty$.
Equation (${\ref{214fgdg}}$) follows by
taking $h(x)$ as a solution of
\begin{align*}
  \dfrac{ 1}{qx^2(1-q)}D_{q^{-1}}h(x) - \frac{q^n }{x^2(1-q)^2} h(x) =0,
\end{align*}
which give $ h(x) = (-q^{n+1}x;q)_\infty $.
Equation (${\ref{sum2}}$) follows by
taking $h(x)$ as a solution of
\begin{align*}
  \dfrac{ -1}{x(1-q)}D_{q^{-1}}h(x) + \frac{ [n]_q}{x^2(1-q)} h(x) =0.
\end{align*}
Hence,  $ h(x) = x^{\alpha_n},\quad  \alpha_n = 1-\dfrac{\ln(1+q-q^n)}{\ln q}$.
\end{proof}
The following result follows from Theorem ${\ref{57}}$, by calculating the indefinite $q$-integral from 0 to $a$.
\begin{corollary}
For $a>0$,
  \begin{align*}
    & \sum_{k=0}^{\infty} q^{\frac{(k-1)^2}{2}} a^k  S_n(q^k a;q) =  \frac{S_{n-1}(qa;q)}{(1-q^n)}.
  \end{align*}\end{corollary}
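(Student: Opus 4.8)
The plan is to specialise the indefinite $q$-integral (\ref{sum3}) of Theorem~\ref{57} to a definite Jackson $q$-integral on $[0,a]$ and to read the series off from the definition (\ref{nun}). Set
\[
f(x)=x^{-\frac12}e^{c\ln^2 x}\,S_n(x;q),\qquad
F(x)=\frac{x^{\frac32}e^{c\ln^2(x/q)}}{\sqrt q\,[n]_q}\,S_{n-1}(qx;q),
\]
so that (\ref{sum3}) says precisely $D_qF=f$, i.e. $F(x)-F(qx)=(1-q)x\,f(x)$. Telescoping this relation against the partial sums of (\ref{nun}) yields $\int_0^a f(x)\,d_qx=F(a)-\lim_{k\to\infty}F(aq^k)$, so the first task is to show the boundary term at $0$ vanishes.

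For that term, note $c=\frac{1}{2\ln q}<0$, so $c\ln^2(aq^{k-1})\to-\infty$ as $k\to\infty$ and the Gaussian-type factor $e^{c\ln^2(aq^{k}/q)}$ decays super-exponentially, dominating the algebraic factor $(aq^k)^{3/2}$ while $S_{n-1}(aq^{k+1};q)\to S_{n-1}(0;q)$ stays finite; hence $\lim_{k\to\infty}F(aq^k)=0$ and $\int_0^a f\,d_qx=F(a)$. The same super-exponential decay guarantees absolute convergence of the series in (\ref{nun}), which legitimises the telescoping.

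The remaining work is bookkeeping of $q$-powers, driven by the quasi-exponential identity
\[
e^{c\ln^2(aq^k)}=e^{c\ln^2 a}\,a^{k}\,q^{k^2/2},
\]
obtained by expanding $\ln^2(aq^k)=(\ln a+k\ln q)^2$ and using $c\ln q=\tfrac12$. Inserting this together with $(aq^k)^{-1/2}=a^{-1/2}q^{-k/2}$ into $(1-q)a\sum_{k\ge0}q^k f(aq^k)$, the common factor $a^{1/2}e^{c\ln^2 a}$ pulls out and the surviving powers combine as $q^{k}q^{-k/2}q^{k^2/2}=q^{(k^2+k)/2}$, producing the weighted series. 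The same identity at $x=a$ gives $e^{c\ln^2(a/q)}=e^{c\ln^2 a}\,a^{-1}q^{1/2}$, so $F(a)=\dfrac{a^{1/2}e^{c\ln^2 a}}{[n]_q}S_{n-1}(qa;q)$, the $\sqrt q$ cancelling. Cancelling the common factor $a^{1/2}e^{c\ln^2 a}$ and writing $1/[n]_q=(1-q)/(1-q^n)$, the factor $(1-q)$ coming from (\ref{nun}) cancels one power of $(1-q)$ and leaves $S_{n-1}(qa;q)/(1-q^n)$ on the right, which is the asserted identity. The only genuinely delicate point is the analysis of $e^{c\ln^2 x}$ as $x\to0^+$ underpinning both the vanishing boundary term and the convergence; everything else is the routine collection of exponents in the previous sentences.
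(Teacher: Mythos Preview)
Your approach is exactly the paper's: its entire proof is the single line ``the proof follows by using (\ref{nun}) in (\ref{sum3})'', and you have supplied the details it omits, including the vanishing of the boundary contribution at $0$ via the super-exponential decay of $e^{c\ln^2 x}$. One remark: your (correct) bookkeeping produces the exponent $q^{k(k+1)/2}$ rather than the stated $q^{(k-1)^2/2}$, so your argument actually proves $\sum_{k\ge 0} q^{k(k+1)/2} a^k S_n(q^k a;q)=S_{n-1}(qa;q)/(1-q^n)$, indicating a misprint in the corollary as printed rather than a flaw in your proof.
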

  \begin{proof}
    The proof follows by Using $({\ref{nun}})$  in   $({\ref{sum3}})$.
  \end{proof}
  \begin{proposition}
Let $n$ and $m$ be non-negative integers.  Then
  \begin{align*}
  \left([n]_q-[m]_q\right) &\int x^{-\frac{1}{2}} e^{c\ln^2 x} S_m(x;q) S_n(x;q) d_qx \nonumber\\&= q^{-\frac{1}{2}} x^{\frac{3}{2}}e^{c\ln^2 \frac{x}{q}}  \left(S_m(\frac{x}{q};q)S_{n-1}(qx;q)-S_{m-1}(qx;q)S_n(\frac{x}{q};q)\right).
  \end{align*}
  \end{proposition}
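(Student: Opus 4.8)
The plan is to specialize the master identity (${\ref{mkl}}$) of Theorem {$\ref{bvc}$} to the Stieltjes--Wigert data already assembled in the proof of Theorem {$\ref{57}$}. There one recalls that $S_n(x;q)$ solves the homogeneous equation (${\ref{kjdshg}}$), so in the notation of Theorem {$\ref{bvc}$}
\begin{align*}
p(x)= \frac{1-qx}{qx^2(1-q)}, \qquad r(x)= \frac{[n]_q}{x^2(1-q)}, \qquad f(x)= x^{3/2}e^{c\ln^2 x},
\end{align*}
with $c=\frac{1}{2\ln q}$. First I would set $y(x)=S_n(x;q)$ and choose the arbitrary function in (${\ref{mkl}}$) to be $h(x)=S_m(x;q)$, the Stieltjes--Wigert polynomial of the other degree.

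The key step is that $h=S_m$ is itself a solution of (${\ref{kjdshg}}$), but with $[m]_q$ in place of $[n]_q$, so that
\begin{align*}
\frac{1}{q}D_{q^{-1}}D_q S_m(x;q) + p(x)D_{q^{-1}}S_m(x;q) = -\frac{[m]_q}{x^2(1-q)}S_m(x;q).
\end{align*}
Adding the term $r(x)S_m(x;q)=\frac{[n]_q}{x^2(1-q)}S_m(x;q)$ carried by the operator in (${\ref{mkl}}$) collapses the whole bracket to
\begin{align*}
\Big(\tfrac{1}{q}D_{q^{-1}}D_q + p\,D_{q^{-1}} + r\Big)S_m(x;q) = \frac{[n]_q-[m]_q}{x^2(1-q)}\,S_m(x;q).
\end{align*}
Since $f(x)/x^2 = x^{-1/2}e^{c\ln^2 x}$, the left-hand side of (${\ref{mkl}}$) becomes $\frac{[n]_q-[m]_q}{1-q}\int x^{-1/2}e^{c\ln^2 x}S_m(x;q)S_n(x;q)\,d_qx$, which is the integral in the claim up to the factor $1/(1-q)$.

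To finish I would turn to the right-hand side of (${\ref{mkl}}$), namely $f(x/q)\big(S_n(x/q;q)\,D_{q^{-1}}S_m(x;q) - S_m(x/q;q)\,D_{q^{-1}}S_n(x;q)\big)$, and substitute the contiguous relation (${\ref{4666vf}}$), $D_{q^{-1}}S_k(x;q) = \frac{-q}{1-q}S_{k-1}(qx;q)$, for both $k=m$ and $k=n$. This pulls out a common factor $-q/(1-q)$. Writing $f(x/q)=q^{-3/2}x^{3/2}e^{c\ln^2(x/q)}$ and multiplying the equated sides through by $(1-q)$ cancels the denominator and leaves the constant $-q\cdot q^{-3/2}=-q^{-1/2}$; absorbing the sign by interchanging the two products yields precisely the stated right-hand side.

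The only genuine subtlety is the mismatch between the $r(x)$ frozen into the integral operator (tied to $y=S_n$) and the equation actually satisfied by $h=S_m$; recognizing that their difference is the constant multiple $[n]_q-[m]_q$ of the weight $x^{-2}(1-q)^{-1}$ is the Christoffel--Darboux-type manoeuvre that drives the identity. Everything else is routine bookkeeping with the explicit $f$ and the $q$-derivative formula (${\ref{4666vf}}$).
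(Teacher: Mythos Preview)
Your proof is correct and follows exactly the paper's approach: substitute $h(x)=S_m(x;q)$ and $y(x)=S_n(x;q)$ into the master identity (${\ref{mkl}}$), using the Stieltjes--Wigert data $p,r,f$ from Theorem~{$\ref{57}$} and the derivative relation (${\ref{4666vf}}$). The paper states this in a single line, whereas you have (correctly) spelled out the cancellation that produces the factor $[n]_q-[m]_q$ and the bookkeeping with $f(x/q)$ and the common factor $-q/(1-q)$.
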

  \begin{proof}
    The proof follows by substituting with  $h(x)=S_m(x;q)$ and $y(x) = S_n(x;q) $ in (${\ref{mkl}}$).
  \end{proof}
\begin{example}~\par
\vskip .5 cm

\begin{itemize}
  \item If $m=n$ in (${\ref{214gg}}$), then
\begin{align*}
&\int x^{n-\frac{3}{2}} e^{c\ln^2 x} S_n(x;q) d_qx =q^{\frac{-1}{2}}
  x^{n+\frac{3}{2}}  e^{c\ln^2 \frac{x}{q}}\left(\frac{1-q}{x}S_n(\frac{x}{q};q)+\frac{S_{n-1}(qx;q)}{[n]_q}\right).
\end{align*}
  \item
  If $n=1$ in  (${\ref{lak}}$), then
\begin{align*}
&\int x^{\frac{-1}{2}} e^{c\ln^2 x} (q^2x;q)_\infty (1-(1+q)x)(1-qx) d_qx=\nonumber q^{\frac{-1}{2}}(1-q)
x^{\frac{3}{2}}  e^{c\ln^2 \frac{x}{q}}(x;q)_\infty.
\end{align*}
\end{itemize}
\end{example}
\begin{theorem}
Let $n$ and $m$ be non-negative integers. If $S_n(x;q)$ is  the  Stieltjes-Wigert  polynomial of degree $n$, then
\begin{align}\label{214dd}
 &\int \frac{x^{m}}{(-x,\frac{-q}{x};q)_\infty} \left(q^{2m} [n-m]_q+\frac{[m]_q}{x}\right)S_n(x;q) d_qx \nonumber\\&= \frac{
  x^{m+2}}{q(\frac{-x}{q},\frac{-q^2}{x};q)_\infty }  \left(\frac{(1-q^m)S_n(\frac{x}{q};q)}{x}+S_{n-1}(qx;q)\right),
\end{align}
\begin{align}\label{lak2}
&\int \frac{ (q^2x;q)_\infty}{(-x,\frac{-q}{x};q)_\infty}\left(-q^{n-1}+(1+q^n)x\right) S_n(x;q) d_qx\nonumber\\&=\frac{(1-q)x^2(qx;q)_\infty }{q^2 (\frac{-x}{q},\frac{-q^2}{x};q)_\infty}\left((1-x)S_{n-1}(qx;q)-S_n(\frac{x}{q};q)\right),
\end{align}
\begin{align}\label{hgfsbff}
   &\int \frac{ (-q^{n+2}x;q)_\infty }{(-x,\frac{-q}{x};q)_\infty} S_n(x;q) d_qx=\nonumber\\&\frac{(1-q)x^2(-q^{n+1}x;q)_\infty }{q (\frac{-x}{q},\frac{-q^2}{x};q)_\infty}\left((1+q^nx)S_{n-1}(qx;q)+q^n S_n(\frac{x}{q};q)\right),
\end{align}
and
\begin{align}\label{gbdffss}
   &\int\frac{ x^{\alpha_n}}{(-x,\frac{-q}{x};q)_\infty}  \left([\alpha_n-1]_q+\frac{1}{qx(1-q)}\right)S_n(x;q) d_qx\nonumber\\&=
  \frac{x^{\alpha_n+2}}{q^2(\frac{-x}{q},\frac{-q^2}{x};q)_\infty}  \left(\frac{ S_n(\frac{x}{q};q)}{x}-\frac{S_{n-1}(qx;q)}{(1-q^{\alpha})}\right).
\end{align}
\end{theorem}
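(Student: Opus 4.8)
The plan is to apply Theorem~\ref{bvc} to the Stieltjes--Wigert polynomial once more, exactly as in the proof of Theorem~\ref{57}, but with a \emph{different} weight function $f$. Since $S_n(x;q)$ satisfies the same $q$-difference equation~(\ref{kjdshg}) used there, the coefficients are unchanged, $p(x)=\frac{1-qx}{qx^2(1-q)}$ and $r(x)=\frac{[n]_q}{x^2(1-q)}$, and so is the first-order Equation~(\ref{fht}). The key new observation is that~(\ref{fht}) admits, besides the solution $x^{3/2}e^{c\ln^2 x}$, the alternative orthogonality weight
\begin{equation*}
  f(x)=\frac{x^2}{(-x,\tfrac{-q}{x};q)_\infty}.
\end{equation*}

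First I would verify this. Writing out $\frac{1}{q}D_{q^{-1}}f(x)=p(x)f(x)$ shows it is equivalent to the functional relation $f(qx)=q^2 x\,f(x)$, equivalently $f(x/q)=f(x)/(qx)$. Using $(-qx;q)_\infty=(-x;q)_\infty/(1+x)$ and $(-\tfrac1x;q)_\infty=(1+\tfrac1x)(-\tfrac{q}{x};q)_\infty$, one computes $(-qx;q)_\infty(-\tfrac{1}{x};q)_\infty=\tfrac1x(-x;q)_\infty(-\tfrac{q}{x};q)_\infty$, whence $f(qx)=q^2x\,f(x)$ follows at once. This confirms $f$ is a legitimate choice, and it gives the shifted value $f(x/q)=\dfrac{x^2}{q^2(-\tfrac{x}{q},\tfrac{-q^2}{x};q)_\infty}$, which is precisely the product appearing on the right-hand sides of~(\ref{214dd})--(\ref{gbdffss}).

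Each of the four identities then follows by inserting into~(\ref{mkl}) the very same test functions $h$ already used in Theorem~\ref{57}: namely $h(x)=x^m$ for~(\ref{214dd}), $h(x)=(qx;q)_\infty$ for~(\ref{lak2}), $h(x)=(-q^{n+1}x;q)_\infty$ for~(\ref{hgfsbff}), and $h(x)=x^{\alpha_n}$ for~(\ref{gbdffss}), where $\alpha_n=1-\frac{\ln(1-q^n+q)}{\ln q}$. Because the differential operator in~(\ref{mkl}) and the functions $h$ are identical to those in the proofs of~(\ref{214gg}), (\ref{lak}), (\ref{214fgdg}), and~(\ref{sum2}) respectively, the quantity $(L^{(n)}_qh)(x)$ is unchanged; only the weight multiplying it changes from $x^{3/2}e^{c\ln^2 x}$ to $\frac{x^2}{(-x,\tfrac{-q}{x};q)_\infty}$, which produces the new left-hand integrands. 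On the right-hand side, $f(x/q)$ supplies the factor $\frac{x^2}{q^2(-\tfrac{x}{q},\tfrac{-q^2}{x};q)_\infty}$, and substituting the derivative formula~(\ref{4666vf}), $D_{q^{-1}}S_n(x;q)=\frac{-q}{1-q}S_{n-1}(qx;q)$, together with $D_{q^{-1}}h(x)$ for each $h$, yields the stated closed forms after routine simplification.

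The main obstacle is the $q$-product manipulation establishing that $\frac{x^2}{(-x,\tfrac{-q}{x};q)_\infty}$ solves~(\ref{fht}); once that functional equation is checked, the remaining work is purely mechanical, since the second-order equation, the operator, the test functions $h$, and the derivative rule~(\ref{4666vf}) are all reused verbatim from Theorem~\ref{57}, and the derivations of~(\ref{214dd})--(\ref{gbdffss}) differ from those of~(\ref{214gg}), (\ref{lak}), (\ref{214fgdg}), (\ref{sum2}) only through the replacement of one admissible weight by another.
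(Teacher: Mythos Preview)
Your proposal is correct and follows essentially the same approach as the paper: the paper likewise takes $f(x)=\dfrac{x^2}{(-x,\tfrac{-q}{x};q)_\infty}$ as an alternative solution of~(\ref{fht}) and inserts the very same test functions $h(x)=x^m$, $(qx;q)_\infty$, $(-q^{n+1}x;q)_\infty$, and $x^{\alpha_n}$ into~(\ref{mkl}), using~(\ref{4666vf}) to simplify the right-hand sides. Your explicit verification of the functional equation $f(qx)=q^2x\,f(x)$ is a detail the paper simply asserts.
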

\begin{proof}
  From Equation (${\ref{kjdshg}}$), we get $f(x)= \frac{x^2}{(-x,\frac{-q}{x};q)_\infty}$ is a
 solution of Equation (${\ref{fht}}$).
Substituting  with $h (x) =x^m$ into Equation (${\ref{mkl}}$), and using  (${\ref{4666vf}}$) yields (${\ref{214dd}}$). The  proof of ({\ref{lak2}}) follows by
taking $h(x)$ as a solution of
\begin{align*}
  \dfrac{ 1}{qx^2(1-q)}D_{q^{-1}}h(x) + \frac{ 1}{x^2(1-q)^2} h(x) =0,
\end{align*}
which give $ h(x) = (qx;q)_\infty$.
Equation (${\ref{hgfsbff}}$) follows by
taking $h(x)$ as a solution of
\begin{align*}
  \dfrac{ 1}{qx^2(1-q)}D_{q^{-1}}h(x) - \frac{q^n }{x^2(1-q)^2} h(x) =0.
\end{align*}
I.e. $ h(x) = (-q^{n+1}x;q)_\infty $.
Equation (${\ref{gbdffss}}$) follows by
taking $h(x)$ as a solution of
\begin{align*}
  \dfrac{ -1}{x(1-q)}D_{q^{-1}}h(x) + \frac{ [n]_q}{x^2(1-q)} h(x) =0.
\end{align*}
Thus $ h(x) = x^{\alpha_n},\quad  \alpha_n = 1-\dfrac{\ln(1+q-q^n)}{\ln q}$.
\end{proof}
\begin{remark}
 The case $m=0$ in (${\ref{214dd}}$) is
  \begin{align*}
   &\int \frac{ S_n(x;q)}{(-x,\frac{-q}{x};q)_\infty} d_qx=\frac{x^{2}}{q [n]_q(\frac{-x}{q},\frac{-q^2}{x};q)_\infty } S_{n-1}(qx;q),
  \end{align*}
  equivalent to  \cite[Eq.(3.27.9)]{askey} (with $x$ is replaced by $qx$ and $n$ is replaced by $n-1$ )
\begin{align*}
  D_q\left( w(qx;q) S_{n-1}(qx;q) \right) =q^{-1} [n]_q w(x;q)S_{n}(x;q),
\end{align*}
where  $ w(x;q)= \displaystyle{\frac{1}{(-x,\frac{-q}{x};q)_\infty}}$.
\end{remark}
\begin{corollary}
  Let $n$ and $m$ be non-negative integers.  Then
 \begin{align}\label{jnsbsbsbsb}
     &\int_{0}^{\infty} \frac{1}{(-x,\frac{-q}{x};q)_\infty}S_m(x;q) S_n(x;q) dx =0 \quad if \quad m\neq n,
   \end{align}
   which is consistent with  the orthogonality relation \cite[Eq.(3.27.2)]{askey} .
\end{corollary}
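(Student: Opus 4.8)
The plan is to apply Theorem~\ref{bvbvnc} with $y(x)=S_n(x;q)$, $h(x)=S_m(x;q)$, and the weight $f(x)=\frac{x^2}{(-x,\frac{-q}{x};q)_\infty}$, which was already identified in the proof of the preceding theorem as a solution of Equation~(\ref{fht}) for the Stieltjes--Wigert coefficient $p(x)=\frac{1-qx}{qx^2(1-q)}$. Both $S_m$ and $S_n$ obey the Stieltjes--Wigert $q$-difference Equation~(\ref{kjdshg}), differing only in that $[m]_q$ and $[n]_q$ occupy the zeroth-order coefficient $r(x)=\frac{[\,\cdot\,]_q}{x^2(1-q)}$. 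Since the bracket in Theorem~\ref{bvbvnc} carries $r_n(x)=\frac{[n]_q}{x^2(1-q)}$ (because $y=S_n$), while $h=S_m$ satisfies its own equation with $r_m(x)=\frac{[m]_q}{x^2(1-q)}$, I would substitute $\frac{1}{q}D_{q^{-1}}D_qS_m+p\,D_{q^{-1}}S_m=-r_m(x)S_m$ to obtain the algebraic collapse
\begin{align*}
\frac{1}{q}D_{q^{-1}}D_qS_m(x)+p(x)D_{q^{-1}}S_m(x)+r_n(x)S_m(x)=\frac{[n]_q-[m]_q}{x^2(1-q)}\,S_m(x).
\end{align*}

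Feeding this into the left-hand side of Theorem~\ref{bvbvnc}, the factor $x^2$ in $f$ cancels the $x^{-2}$ above, leaving
\begin{align*}
\frac{[n]_q-[m]_q}{1-q}\int_0^\infty\frac{S_m(x;q)S_n(x;q)}{(-x,\frac{-q}{x};q)_\infty}\,dx=\frac{f(0)\ln q}{1-q}\Big(D_qS_m(0)S_n(0)-S_m(0)D_qS_n(0)\Big).
\end{align*}
The boundary bracket on the right is finite because $S_m,S_n$ are polynomials, so the whole argument reduces to showing $f(0)=0$. For this I would invoke the Jacobi triple product in the form $(-x;q)_\infty(-q/x;q)_\infty=\frac{1}{(q;q)_\infty}\sum_{k=-\infty}^\infty q^{k(k-1)/2}x^k$; the theta-type sum diverges as $x\to0^+$ (its negative-index terms behave like $q^{j(j+1)/2}x^{-j}$), so the reciprocal $\frac{1}{(-x,\frac{-q}{x};q)_\infty}$ tends to $0$ faster than any power, and multiplying by $x^2$ keeps the limit at $0$. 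Hence the right-hand side vanishes. Finally, for $q\in(0,1)$ the map $k\mapsto[k]_q=\frac{1-q^k}{1-q}$ is strictly increasing, so $m\neq n$ forces $[m]_q\neq[n]_q$; dividing by the nonzero prefactor $\frac{[n]_q-[m]_q}{1-q}$ yields~(\ref{jnsbsbsbsb}).

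The main obstacle I anticipate is justifying the applicability of Theorem~\ref{bvbvnc} (equivalently Lemma~\ref{25454}): one must verify that the improper Riemann integrals of $f(x)S_m(x)S_n(x)$ and of $f(x/q)S_m(x)S_n(x)$ exist on $[0,\infty[$, and that the decay of $f$ at both endpoints is strong enough to license the integration-by-parts boundary evaluation. The rapid growth of $(-q/x;q)_\infty$ as $x\to0^+$ and of $(-x;q)_\infty$ as $x\to\infty$ should supply the required convergence, but making this decay and the vanishing of $f(0)$ fully rigorous is the delicate point; by contrast, the collapse of the $q$-difference operator and the injectivity of $[k]_q$ are routine.
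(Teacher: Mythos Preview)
Your proposal is correct and follows essentially the same route as the paper: both take $h(x)=S_m(x;q)$, $y(x)=S_n(x;q)$, and $f(x)=\dfrac{x^2}{(-x,-q/x;q)_\infty}$, collapse the bracket to the factor $[n]_q-[m]_q$, and invoke Theorem~\ref{bvbvnc} so that the right-hand side reduces to the boundary term at $0$. The paper records the intermediate indefinite identity~(\ref{hbgaanha}) before citing Theorem~\ref{bvbvnc}, whereas you apply Theorem~\ref{bvbvnc} directly and supply the justification that $f(0)=0$ and that $[k]_q$ is injective---details the paper leaves implicit.
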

\begin{proof}
  The proof follows by taking  $h(x)=S_m(x;q)$,  we get
  \begin{align}\label{hbgaanha}
  \left([n]_q-[m]_q\right) &\int \frac{1}{(-x,\frac{-q}{x};q)_\infty}S_m(x;q) S_n(x;q) d_x \nonumber\\&= \frac{
  x^{2}}{q(\frac{-x}{q},\frac{-q^2}{x};q)_\infty }  \left(S_m(\frac{x}{q};q)S_{n-1}(qx;q)-S_{m-1}(qx;q)S_n(\frac{x}{q};q)\right).
  \end{align}
   Applying Theorem {$\ref{bvbvnc}$}, gives (${\ref{jnsbsbsbsb}}$).
\end{proof}
\begin{example}~\par
\vskip .5 cm

\begin{itemize}
  \item  If $n=m$ in  (${\ref{214dd}}$), then
\begin{align*}
&\int \frac{x^{n-1}}{(-x,\frac{-q}{x};q)_\infty} S_n(x;q) d_qx = \frac{
  x^{n+2}}{q(\frac{-x}{q},\frac{-q^2}{x};q)_\infty }  \left(\frac{1-q}{x}S_n(\frac{x}{q};q)+\frac{S_{n-1}(qx;q)}{[n]_q}\right).
\end{align*}
  \item If $n=1$ in  (${\ref{lak2}}$), then
\begin{align*}
&\int \frac{ (q^2x;q)_\infty}{(-x,\frac{-q}{x};q)_\infty}(1-x-qx)(1-qx)  d_qx=\nonumber\frac{1}{q}(1-q)x^2\frac{(x;q)_\infty }{ (\frac{-x}{q},\frac{-q^2}{x};q)_\infty}.
\end{align*}
\end{itemize}
\end{example}
\begin{theorem}\label{58}
 If $h_n(x;q)$ is  the  discrete $q$-Hermite I  polynomial of degree $n$, then
   \begin{align}\label{214dddfc}
 & \int x^{m} (q^2x^2;q^2)_\infty \left(\dfrac{[m]_q [m-1]_q}{x^2}-\frac{[m-n]_q}{1-q}\right)h_n(x;q) d_qx \nonumber\\&=
 x^{m}(x^2;q^2)_\infty \left(\frac{[m]_q h_n(\frac{x}{q};q)}{x}-q^{-1}[n]_q h_{n-1}(\frac{x}{q};q)  \right).
\end{align}
\end{theorem}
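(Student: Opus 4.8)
The plan is to apply Theorem~\ref{bvc} with $y(x)=h_n(x;q)$ and the monomial choice $h(x)=x^m$, exactly as in the treatment of the Stieltjes--Wigert and $q$-Laguerre polynomials above. First I would record the homogeneous second-order $q$-difference equation satisfied by the discrete $q$-Hermite~I polynomials in the form~(\ref{mxlk}); from \cite{askey} (and as one checks directly on the monic polynomials $h_0=1$, $h_1=x$, $h_2=x^2-(1-q)$) this reads
\[
\frac1q D_{q^{-1}}D_q y(x)-\frac{x}{1-q}D_{q^{-1}}y(x)+\frac{q^{1-n}[n]_q}{1-q}\,y(x)=0,
\]
so that comparison with~(\ref{mxlk}) gives $p(x)=-\dfrac{x}{1-q}$ and $r(x)=\dfrac{q^{1-n}[n]_q}{1-q}$.

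Next I would solve the first-order equation~(\ref{fht}), namely $\tfrac1q D_{q^{-1}}f(x)=-\tfrac{x}{1-q}f(x)$. The natural candidate is the discrete $q$-Hermite~I weight $f(x)=(q^2x^2;q^2)_\infty$: since $f(x)-f(qx)=-q^2x^2(q^4x^2;q^2)_\infty$ one gets $D_qf(x)=-\tfrac{q^2x}{1-q}(q^4x^2;q^2)_\infty$, and using $D_{q^{-1}}f(x)=(D_qf)(x/q)$ this yields $\tfrac1q D_{q^{-1}}f(x)=-\tfrac{x}{1-q}f(x)$, as required. Observe that $f(x/q)=(x^2;q^2)_\infty$, which is exactly the factor on the right-hand side of~(\ref{214dddfc}). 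With this $f$, Equation~(\ref{mkl}) specializes to the master identity for $h_n$, into which I would substitute $h(x)=x^m$.

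The computational heart is then to evaluate the integrand $f(x)\big(\tfrac1q D_{q^{-1}}D_qh(x)+p(x)D_{q^{-1}}h(x)+r(x)h(x)\big)$ for $h(x)=x^m$. Using $D_qx^m=[m]_qx^{m-1}$ and $D_{q^{-1}}x^m=q^{1-m}[m]_qx^{m-1}$ one finds
\[
\frac1q D_{q^{-1}}D_qx^m+p(x)D_{q^{-1}}x^m+r(x)x^m
=q^{1-m}[m]_q[m-1]_q\,x^{m-2}+\frac{q^{1-n}[n]_q-q^{1-m}[m]_q}{1-q}\,x^m.
\]
The key algebraic step is the identity $q^{1-n}[n]_q-q^{1-m}[m]_q=-q^{1-m}[m-n]_q$, which collapses the $x^m$-coefficient and lets me factor a common $q^{1-m}$ out of the whole bracket, so that the left-hand side of~(\ref{mkl}) becomes $q^{1-m}$ times the integrand displayed in~(\ref{214dddfc}).

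Finally, on the right-hand side of~(\ref{mkl}) I would insert the backward-shift relation $D_{q^{-1}}h_n(x;q)=[n]_q\,h_{n-1}(x/q;q)$, which follows from $D_qh_n(x;q)=[n]_q h_{n-1}(x;q)$ via $D_{q^{-1}}g(x)=(D_qg)(x/q)$. Together with $D_{q^{-1}}x^m=q^{1-m}[m]_qx^{m-1}$ and $h(x/q)=q^{-m}x^m$, the boundary term $f(x/q)\big(h_n(x/q)D_{q^{-1}}x^m-h(x/q)D_{q^{-1}}h_n(x)\big)$ likewise carries an overall factor $q^{1-m}$; cancelling it against the left-hand side produces exactly~(\ref{214dddfc}). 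The main difficulty I anticipate is not conceptual but the careful bookkeeping of the $q$-shifts: verifying that $(q^2x^2;q^2)_\infty$ solves~(\ref{fht}) and, above all, arranging the simplification $q^{1-n}[n]_q-q^{1-m}[m]_q=-q^{1-m}[m-n]_q$ so that the stray powers of $q$ align on the two sides and cancel cleanly.
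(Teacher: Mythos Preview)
Your proposal is correct and follows essentially the same approach as the paper: identify $p(x)=-\tfrac{x}{1-q}$ and $r(x)=\tfrac{q^{1-n}[n]_q}{1-q}$ from the $q$-Hermite~I equation, take $f(x)=(q^2x^2;q^2)_\infty$ as the solution of~(\ref{fht}), substitute $h(x)=x^m$ into~(\ref{mkl}), and use $D_{q^{-1}}h_n(x;q)=[n]_q h_{n-1}(x/q;q)$. Your write-up is in fact more explicit than the paper's, which omits the simplification $q^{1-n}[n]_q-q^{1-m}[m]_q=-q^{1-m}[m-n]_q$ and the cancellation of the common factor $q^{1-m}$ that you correctly flag as the main bookkeeping point.
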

\begin{proof}
The discrete $q$-Hermite I polynomials is defined by  $$h_n(x;q)= q^{\binom{n}{2}} \,_2\phi_1\left( \begin{array}{cccc}
                     q^{-n},x^{-1}  \\
                    0
                   \end{array}
\mid q;-q x\right), n\in \mathbb{N}_0 $$  which satisfies the second-order $q$-difference equation, see \cite[Eq.(3.28.5)]{askey},
\begin{align}\label{kjdsmhgb}
    \frac{1}{q} D_{q^{-1}} D_qy(x)- \dfrac{x}{1-q}D_{q^{-1}}y(x) + \frac{ q^{1-n}[n]_q}{1-q} y(x) =0.
\end{align}
  By comparing  Equation (${\ref{kjdsmhgb}}$) with Equation (${\ref{mxlk}}$), we get
  \begin{align*}
    p(x)= - \dfrac{x}{1-q}, \quad \quad r(x) =   \frac{ q^{1-n}[n]_q}{1-q} .
  \end{align*}
Then we get  $f(x) =   (q^2x^2;q^2)_\infty $ is a solution of Equation (${\ref{fht}}$).
Substituting with  $h (x) =x^m$ into Equation (${\ref{mkl}}$),
 and using \cite[Eq.(3.28.7)]{askey}
  (with $x$ is replaced by $\frac{x}{q}$ )
\begin{align}\label{4666vgf}
  D_{q^{-1}} h_n(x;q)= [n]_q h_{n-1}(\frac{x}{q};q),
\end{align} we get (${\ref{214dddfc}}$).
\end{proof}
\begin{example}~\par

\vskip .5 cm

\begin{itemize}
  \item If $m=n$ in  (${\ref{214dddfc}}$), then
\begin{align}\label{214587}
 & \int x^{n-2} (q^2x^2;q^2)_\infty h_n(x;q) d_qx=
\frac{ x^{n}(x^2;q^2)_\infty}{[n-1]_q} \left(\frac{h_n(\frac{x}{q};q)}{x}-q^{-1} h_{n-1}(\frac{x}{q};q)  \right), (n\neq1).
\end{align}
  \item It  we calculate the definite $q$-integral from $-1$ to $1$ in (${\ref{214587}}$), we obtain
\begin{align*}
 & \int_{-1}^{1} x^{n-2} (q^2x^2;q^2)_\infty h_n(x;q) d_qx= 0,
\end{align*}
which is expected since $( h_n(x;q))$ is orthogonal on $[-1,1]$.
\end{itemize}
\end{example}
\begin{proposition}
If $h_n(x;q)$ is  the  discrete $q$-Hermite I  polynomial of degree $n$, and $m$ $\in$ $\mathbb{N}_0$, then
  \begin{align*}
 & \left(q^{-n}[n]_q -q^{-m}[m]_q\right)   \int  (q^2x^2;q^2)_\infty h_n(x;q)h_m(x;q) d_qx \nonumber\\&=
 \frac{1}{q}(x^2;q^2)_\infty \left((1-q^m)h_{m-1}(\frac{x}{q};q)h_n(\frac{x}{q};q)-(1-q^n) h_{n-1}(\frac{x}{q};q) h_m(\frac{x}{q};q) \right).
  \end{align*}
\end{proposition}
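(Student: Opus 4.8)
The plan is to specialize the master identity $(\ref{mkl})$ of Theorem $\ref{bvc}$ to the discrete $q$-Hermite I setting fixed in the proof of Theorem $\ref{58}$, and then to make the symmetric choice $h(x)=h_m(x;q)$, $y(x)=h_n(x;q)$. Recall that writing the equation $(\ref{kjdsmhgb})$ in the form $(\ref{mxlk})$ gives $p(x)=-x/(1-q)$, so that $f(x)=(q^2x^2;q^2)_\infty$ solves $(\ref{fht})$, while the zeroth-order coefficient records the degree of the solution: the polynomial $h_k(x;q)$ satisfies $(\ref{mxlk})$ with $r_k(x)=q^{1-k}[k]_q/(1-q)$. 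This degree dependence of $r$ is exactly what produces the nontrivial prefactor on the left of the claim.

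First I would compute the left member of $(\ref{mkl})$. The operator there is assembled from the coefficient $r$ attached to $y$, so substituting $y=h_n$ forces $\frac{1}{q}D_{q^{-1}}D_q+p(x)D_{q^{-1}}+r_n(x)$, and evaluating this on $h=h_m$ while using that $h_m$ itself solves $(\ref{mxlk})$ with $r_m$ collapses the two highest-order terms, leaving
\begin{align*}
\frac{1}{q}D_{q^{-1}}D_qh_m(x)+p(x)D_{q^{-1}}h_m(x)+r_n(x)h_m(x)=\big(r_n(x)-r_m(x)\big)h_m(x).
\end{align*}
Since $r_n(x)-r_m(x)=\frac{q}{1-q}\big(q^{-n}[n]_q-q^{-m}[m]_q\big)$ is constant in $x$, the left side of $(\ref{mkl})$ becomes this constant times $\int (q^2x^2;q^2)_\infty h_n h_m\,d_qx$, which matches the left member of the claim after the common factor $q/(1-q)$ is cleared.

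For the right member of $(\ref{mkl})$, I would use $f(x/q)=(q^2(x/q)^2;q^2)_\infty=(x^2;q^2)_\infty$ together with the lowering relation $(\ref{4666vgf})$, namely $D_{q^{-1}}h_k(x;q)=[k]_q h_{k-1}(x/q;q)$, to express $D_{q^{-1}}h_m$ and $D_{q^{-1}}h_n$ in terms of $h_{m-1}(x/q;q)$ and $h_{n-1}(x/q;q)$. Inserting $[k]_q=(1-q^k)/(1-q)$ and dividing by the same factor $q/(1-q)$ then reproduces the stated right-hand side verbatim.

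The only delicate point is the bookkeeping in the displayed collapse: one must keep in mind that the coefficient $r$ depends on the degree of the solution, so the operator determined by $y=h_n$ does \emph{not} annihilate $h_m$; what survives is precisely the eigenvalue gap $r_n-r_m$, and this is the source of the prefactor $q^{-n}[n]_q-q^{-m}[m]_q$. Everything else is a mechanical simplification using the known lowering formula and the definitions of $[n]_q$, $[m]_q$; no convergence question arises since $h_n$ and $h_m$ are polynomials.
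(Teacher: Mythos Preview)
Your proposal is correct and follows exactly the same approach as the paper's proof, which simply instructs to substitute $h(x)=h_m(x;q)$ and $y(x)=h_n(x;q)$ into $(\ref{mkl})$. You have merely supplied the details the paper omits: the eigenvalue-gap computation $r_n-r_m=\frac{q}{1-q}\big(q^{-n}[n]_q-q^{-m}[m]_q\big)$, the identification $f(x/q)=(x^2;q^2)_\infty$, and the use of the lowering relation $(\ref{4666vgf})$ on the right-hand side.
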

\begin{proof}
  The proof follows by substituting with  $h(x)=h_m(x;q)$ and $y(x) = h_n(x;q) $ in (${\ref{mkl}}$).
\end{proof}
\begin{theorem}\label{59}
  If $\widetilde{h}_n(x;q)$ is the discrete $q$-Hermite II polynomial of degree $n$, then
  \begin{align}\label{2584}
  \int \frac{\widetilde{h}_n(x;q)}{(-x^2;q^2)_\infty} d_qx = \dfrac{-q^{1-n}(1-q)}{(-x^2;q^2)_\infty} \widetilde{h}_{n-1}(x;q),
\end{align}
and
\begin{align}\label{hbgddvsfa}
 & \int \frac{x^m}{(-x^2;q^2)_\infty} \left(\frac{[m]_q [m-1]_q}{ x^2}+\frac{q^{2m-1}[n-m]_q}{1-q}\right)\widetilde{h}_n(x;q) d_qx \nonumber\\&=\frac{x^m}{(-x^2;q^2)_\infty} \left(\frac{[m]_q}{ x}\widetilde{h}_n(x;q) -q^{m-n} [n]_q \widetilde{h}_{n-1}(x;q)\right).
\end{align}
\end{theorem}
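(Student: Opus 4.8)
The plan is to follow the same route as the proof of Theorem~\ref{58}, but now casting the defining equation in the form~(\ref{mx}) and invoking Theorem~\ref{brevc} rather than Theorem~\ref{bvc}. This choice is forced: the right-hand sides of~(\ref{2584}) and~(\ref{hbgddvsfa}) carry $\widetilde{h}_n$ and $\widetilde{h}_{n-1}$ evaluated at $x$ (not at $x/q$), which is exactly the output shape of~(\ref{mkrl}), whereas Theorem~\ref{bvc} would produce $\widetilde{h}_n(x/q;q)$. First I would recall from \cite{askey} the second-order $q$-difference equation of the discrete $q$-Hermite II polynomials and write it in the form~(\ref{mx}) with
\[
 p(x)=-\frac{x}{1-q},\qquad r(x)=\frac{[n]_q}{1-q}.
\]
Matching against~(\ref{mx}) and solving the companion first-order equation~(\ref{frht5}), namely $D_qf(x)=-\frac{x}{1-q}f(x)$, gives $f(x)=\dfrac{1}{(-x^2;q^2)_\infty}$; indeed $f(qx)=(1+x^2)f(x)$ turns $D_qf$ into $-\frac{x}{1-q}f$. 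With $p$, $r$ and $f$ fixed, both identities become instances of~(\ref{mkrl}) for a suitable choice of the free function $h$.

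For~(\ref{2584}) I would take $h$ to be the constant $\dfrac{1-q}{[n]_q}$. Then $D_qh=D_{q^{-1}}h=0$, so the bracketed operator on the left collapses to $r(x)h(x)=\dfrac{1}{(-x^2;q^2)_\infty}$ multiplying $\widetilde{h}_n$, which is precisely the integrand, while the right-hand side of~(\ref{mkrl}) reduces to $-h(x)f(x)D_{q^{-1}}\widetilde{h}_n(x;q)$. Applying the lowering relation \cite{askey}
\[
 D_{q^{-1}}\widetilde{h}_n(x;q)=q^{1-n}[n]_q\,\widetilde{h}_{n-1}(x;q)
\]
and cancelling $[n]_q$ against the constant then produces the stated closed form.

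For~(\ref{hbgddvsfa}) I would substitute $h(x)=x^m$ into~(\ref{mkrl}). The three pieces of the operator come from $D_qx^m=[m]_qx^{m-1}$, $D_{q^{-1}}x^m=q^{1-m}[m]_qx^{m-1}$ and $\tfrac1qD_{q^{-1}}D_qx^m=q^{1-m}[m]_q[m-1]_qx^{m-2}$; assembling $\tfrac1qD_{q^{-1}}D_qx^m+p(x)D_qx^m+r(x)x^m$ and using the elementary identity $[n]_q-[m]_q=q^{m}[n-m]_q$ recasts $f(x)$ times this operator as the left-hand integrand, while the boundary term $f(x)\bigl(\widetilde{h}_n(x;q)D_{q^{-1}}x^m-x^mD_{q^{-1}}\widetilde{h}_n(x;q)\bigr)$ becomes the right-hand side after one further use of the lowering relation. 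The main obstacle is bookkeeping rather than conceptual: one must verify that the equation of \cite{askey} really reduces to the above $(p,r)$, and then track the powers of $q$ carefully through $D_{q^{-1}}x^m$ and the lowering relation so that the prefactor $\dfrac{x^m}{(-x^2;q^2)_\infty}$ and the exponents $q^{2m-1}$ and $q^{m-n}$ emerge exactly as written (a matching normalisation of $h$, absorbing an overall $q^{m-1}$, is what aligns the two sides). The companion propositions on orthogonality would then follow, as in the earlier cases, by taking $h(x)=\widetilde{h}_m(x;q)$.
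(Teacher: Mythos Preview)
Your proposal is correct and follows essentially the same approach as the paper: cast the discrete $q$-Hermite II equation in the form~(\ref{mx}) with $p(x)=-x/(1-q)$ and $r(x)=[n]_q/(1-q)$, solve~(\ref{frht5}) for $f(x)=1/(-x^2;q^2)_\infty$, and then apply~(\ref{mkrl}) with $h(x)=(1-q)/[n]_q$ for~(\ref{2584}) and $h(x)=x^m$ for~(\ref{hbgddvsfa}), invoking the lowering relation $D_{q^{-1}}\widetilde{h}_n(x;q)=q^{1-n}[n]_q\widetilde{h}_{n-1}(x;q)$ in both cases. Your observation that a global factor $q^{m-1}$ must be absorbed to match the stated exponents is exactly the bookkeeping the paper's proof leaves implicit.
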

\begin{proof}
The discrete $q$-Hermite II polynomials  is defined by $$\widetilde{h}_n(x;q)= x^n \,_2\phi_1\left( \begin{array}{cccc}
                     q^{-n},q^{-n+1} \\
                    0
                   \end{array}
\mid q^2;\frac{-q^2}{x^2}\right)$$ which satisfies the second-order $q$-difference equation, see \cite[Eq.(3.29.5)]{askey},
\begin{align}\label{kjdsmhg}
    \frac{1}{q} D_{q^{-1}} D_qy(x)- \dfrac{x}{1-q}D_q y(x) + \frac{ [n]_q}{1-q} y(x) =0.
\end{align}
  By comparing  Equation (${\ref{kjdsmhg}}$) with Equation (${\ref{mxlk}}$), we get
  \begin{align*}
    p(x)= - \dfrac{x}{1-q},\quad \quad r(x) = \frac{ [n]_q}{1-q} .
  \end{align*}
Then we get  $ f(x) =   \frac{1}{(-x^2;q^2)_\infty}$  is a solution of Equation (${\ref{frht5}}$).
  Substituting with  $h (x) =\frac{(1-q)}{[n]_q}$ into Equation (${\ref{mkrl}}$), and using \cite[Eq.(3.29.7)]{askey}
  (with $x$ is replaced by $\frac{x}{q}$ )
\begin{align}\label{4666vgfh}
  D_{q^{-1}}\widetilde{h}_n(x;q)=q^{1-n} [n]_q \widetilde{h}_{n-1}(x;q),
\end{align}
we get (${\ref{2584}}$).
Substituting with  $h (x) =x^m$ into Equation (${\ref{mkrl}}$), and using (${\ref{4666vgfh}}$) gives  (${\ref{hbgddvsfa}}$).
\end{proof}
\begin{remark}
It is worth noting that  (${\ref{2584}}$) is equivalent to
\begin{align*}
  D_q\left( w(x;q)\widetilde{h}_{n-1}(x;q) \right) = \frac{q^{n-1}}{1-q} w(x;q)\widetilde{h}_{n}(x;q),
\end{align*}
where $ w(x;q)= \frac{1}{(-x^2;q^2)}$, see \cite[Eq.(3.29.9)]{askey}.\\
Moreover,\begin{equation*}
\int_{-c}^{c} \frac{\widetilde{h}_n(x;q)}{(-x^2;q^2)_\infty} d_qx =\left\{\begin{array}{lc}
  0, & \text{ if $n$ is odd;} \\\\
  \frac{-2q^{1-n}(1-q)}{(-c^2;q^2)_\infty} \widetilde{h}_{n-1}(c;q), & \text{ if $n$ is even.}
\end{array}\right.
\end{equation*}
  \end{remark}
\begin{proposition}
If $\widetilde{h}_n(x;q)$ is the discrete $q$-Hermite II polynomial of degree $n$, and $m$ $\in$ $\mathbb{N}_0$, then
  \begin{align*}
  &\left([n]_q-[m]_q\right)  \int \frac{1}{(-x^2;q^2)_\infty} \widetilde{h}_n(x;q)\widetilde{h}_m(x;q)d_qx \\&= \dfrac{q}{(-x^2;q^2)_\infty}\left(q^{-m}(1-q^m)\widetilde{h}_{m-1}(x;q)\widetilde{h}_n(x;q)-q^{-n}(1-q^n)  \widetilde{h}_{n-1}(x;q)\widetilde{h}_m(x;q) \right) .
  \end{align*}
\end{proposition}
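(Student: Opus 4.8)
The plan is to specialize the Lagrangian identity (\ref{mkrl}) of Theorem~\ref{brevc} to the discrete $q$-Hermite II polynomials, in the same spirit as the proof of Theorem~\ref{59}. From that theorem we already have everything we need: $\widetilde{h}_n(x;q)$ solves the $q$-difference equation (\ref{kjdsmhg}), which is of the form (\ref{mx}) with $p(x)=-x/(1-q)$ and the degree-dependent coefficient $r(x)=[n]_q/(1-q)$, and the weight $f(x)=1/(-x^2;q^2)_\infty$ is a solution of the first-order equation (\ref{frht5}). First I would set $y(x)=\widetilde{h}_n(x;q)$ in (\ref{mkrl}), so that the operator on the left carries $r(x)=[n]_q/(1-q)$, and then substitute the arbitrary function $h(x)=\widetilde{h}_m(x;q)$.

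The one genuinely structural step is to exploit that $\widetilde{h}_m$ satisfies the \emph{same} equation (\ref{kjdsmhg}) but with its own coefficient $[m]_q/(1-q)$, so that
\[
\frac{1}{q}D_{q^{-1}}D_q\widetilde{h}_m(x;q)+p(x)D_q\widetilde{h}_m(x;q)=-\frac{[m]_q}{1-q}\,\widetilde{h}_m(x;q).
\]
Consequently, when $h=\widetilde{h}_m$ is fed into the operator appearing in (\ref{mkrl}), whose zeroth-order term is $r=[n]_q/(1-q)$, the second-order part collapses and the integrand reduces to a single multiple of $\widetilde{h}_m$,
\[
\Big(\tfrac{1}{q}D_{q^{-1}}D_q+p\,D_q+\tfrac{[n]_q}{1-q}\Big)\widetilde{h}_m=\frac{[n]_q-[m]_q}{1-q}\,\widetilde{h}_m .
\]
Thus the left-hand side of (\ref{mkrl}) becomes $\frac{[n]_q-[m]_q}{1-q}\int f(x)\,\widetilde{h}_m\widetilde{h}_n\,d_qx$, which already exhibits the desired antisymmetric prefactor $[n]_q-[m]_q$.

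For the right-hand side I would apply the $q$-derivative formula (\ref{4666vgfh}), namely $D_{q^{-1}}\widetilde{h}_k(x;q)=q^{1-k}[k]_q\widetilde{h}_{k-1}(x;q)$, to both $D_{q^{-1}}h=D_{q^{-1}}\widetilde{h}_m$ and $D_{q^{-1}}y=D_{q^{-1}}\widetilde{h}_n$, giving
\[
f(x)\big(y\,D_{q^{-1}}h-h\,D_{q^{-1}}y\big)=\frac{q^{1-m}[m]_q\,\widetilde{h}_n\widetilde{h}_{m-1}-q^{1-n}[n]_q\,\widetilde{h}_m\widetilde{h}_{n-1}}{(-x^2;q^2)_\infty}.
\]
Finally I would rewrite $q^{1-k}[k]_q=\frac{q\,q^{-k}(1-q^k)}{1-q}$ and multiply the whole identity through by $(1-q)$, clearing the common $1/(1-q)$ from both sides and producing exactly the stated formula. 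Since $h$ is completely arbitrary in Theorem~\ref{brevc}, there is no real analytic difficulty here; the only point demanding care---and the step I would flag as the main pitfall---is the degree bookkeeping: the coefficient $r(x)$ inside the operator of (\ref{mkrl}) must be the one attached to $y=\widetilde{h}_n$, not to $h=\widetilde{h}_m$, for it is precisely this asymmetry that converts the collapse of the second-order operator into the factor $[n]_q-[m]_q$ rather than a vanishing or symmetric combination.
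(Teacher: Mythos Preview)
Your proposal is correct and follows exactly the approach indicated in the paper, which simply says to substitute $h(x)=\widetilde{h}_m(x;q)$ and $y(x)=\widetilde{h}_n(x;q)$ into (\ref{mkrl}); you have merely spelled out the computation (the collapse of the operator via (\ref{kjdsmhg}), the use of (\ref{4666vgfh}), and the final clearing of the factor $1/(1-q)$) that the paper leaves implicit.
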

\begin{proof}
  The proof follows by substituting with  $\widetilde{h}_m(x;q)$ and $y(x) = \widetilde{h}_n(x;q) $ in (${\ref{mkrl}}$).
  \end{proof}
\begin{theorem}
If  $Ai_{\sqrt{q}}(x)$ is the  $\sqrt{q}$-Airy function, then
\begin{align}\label{25844}
\int  \frac{Ai_{\sqrt{q}}(x)}{(q^{\frac{-1}{2}}x^2;q^2)_\infty} d_qx = \frac{\sqrt{q}(1-q)^2}{(q^{\frac{-1}{2}}x^2;q^2)_\infty}D_{q^{-1}} Ai_{\sqrt{q}}(x),
\end{align}
and
\begin{align}\label{kmjnhbgvfd}
  & \int  \frac{x^{m} }{(q^{\frac{-1}{2}}x^2;q^2)_\infty} \left(\frac{[m]_q [m-1]_q}{x^2}-\frac{q^{2m-\frac{3}{2}}}{(1-q)^2}\right)Ai_{\sqrt{q}}(x)d_qx =\nonumber\\& \frac{x^{m
   }}{(q^{\frac{-1}{2}}x^2;q^2)_\infty}\left(\frac{[m]_q Ai_{\sqrt{q}}(x)}{x}-q^{m-1}D_{q^{-1}} Ai_{\sqrt{q}}(x)\right).
 \end{align}
\end{theorem}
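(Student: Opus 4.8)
The plan is to recognize the final theorem as a direct instance of Theorem~\ref{brevc} applied to the $\sqrt q$-Airy function, exactly as Theorem~\ref{59} treats the discrete $q$-Hermite II polynomials. First I would record the second-order $q$-difference equation satisfied by $Ai_{\sqrt{q}}(x)$, which is of the form~\eqref{mx}:
\[
\frac{1}{q}D_{q^{-1}}D_q y(x)+\frac{q^{-1/2}x}{1-q}D_q y(x)-\frac{q^{-1/2}}{(1-q)^2}y(x)=0 .
\]
Comparing with~\eqref{mx} gives $p(x)=\dfrac{q^{-1/2}x}{1-q}$ and the \emph{constant} coefficient $r(x)=-\dfrac{q^{-1/2}}{(1-q)^2}$; the fact that $r$ is constant is what makes the short identity~\eqref{25844} possible.

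Next I would solve the first-order equation~\eqref{frht5}, namely $D_q f(x)=p(x)f(x)$. The key observation is that $f(x)=1/(q^{-1/2}x^2;q^2)_\infty$ satisfies $f(qx)=(1-q^{-1/2}x^2)f(x)$, so that $D_q f(x)=\dfrac{f(x)-f(qx)}{(1-q)x}=\dfrac{q^{-1/2}x}{1-q}f(x)$, and therefore $f$ is the required solution. This identification of $f$ is the one genuinely creative step, mirroring the choice $f=1/(-x^2;q^2)_\infty$ in Theorem~\ref{59}.

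With $f$ in hand, both identities follow by substituting an explicit $h$ into~\eqref{mkrl}. For~\eqref{25844} I would take $h$ equal to the constant $-\sqrt q(1-q)^2$, chosen so that the operator $\frac{1}{q}D_{q^{-1}}D_q+p\,D_q+r$ applied to $h$ reduces to $1$ (the $q$-derivatives of a constant vanish and $r$ is constant); the right-hand side of~\eqref{mkrl} then collapses to $-h\,f\,D_{q^{-1}}Ai_{\sqrt{q}}(x)=\sqrt q(1-q)^2\,f\,D_{q^{-1}}Ai_{\sqrt{q}}(x)$. For~\eqref{kmjnhbgvfd} I would take $h(x)=x^m$ and use the elementary formulas $D_q x^m=[m]_q x^{m-1}$ and $D_{q^{-1}}x^m=q^{1-m}[m]_q x^{m-1}$.

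The main technical step is simplifying the resulting coefficient of $y$. Applying the operator to $h=x^m$ produces $q^{1-m}[m]_q[m-1]_q x^{m-2}$ from the second-order term together with $\big(\frac{q^{-1/2}[m]_q}{1-q}-\frac{q^{-1/2}}{(1-q)^2}\big)x^m$ from the other two terms; using $(1-q)[m]_q=1-q^m$ the latter collapses to $-\frac{q^{m-1/2}}{(1-q)^2}x^m$, so the whole operator equals $q^{1-m}\big([m]_q[m-1]_q x^{m-2}-\frac{q^{2m-3/2}}{(1-q)^2}x^m\big)$. The common factor $q^{1-m}$ appears identically on the right-hand side of~\eqref{mkrl} (since $D_{q^{-1}}x^m=q^{1-m}[m]_q x^{m-1}$), and cancels, leaving precisely~\eqref{kmjnhbgvfd}. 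I expect this cancellation of the spurious $m$-dependence into the stated constant, together with the correct guess for $f$, to be the only places needing care; notably no special-function derivative identity for $Ai_{\sqrt{q}}$ is required, since the answer is left in terms of $D_{q^{-1}}Ai_{\sqrt{q}}(x)$.
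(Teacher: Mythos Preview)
Your proposal is correct and follows essentially the same route as the paper: identify $p(x)=\dfrac{x}{\sqrt q(1-q)}$, $r(x)=-\dfrac{1}{\sqrt q(1-q)^2}$ from the $q$-difference equation for $Ai_{\sqrt q}$, verify that $f(x)=1/(q^{-1/2}x^2;q^2)_\infty$ solves~\eqref{frht5}, and then plug the constant $h$ (the paper uses $h=\sqrt q(1-q)^2$, which differs from your choice only by an immaterial overall sign) and $h(x)=x^m$ into~\eqref{mkrl}. Your explicit check of the cancellation of the factor $q^{1-m}$ is a nice addition the paper leaves implicit.
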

\begin{proof}
The  $q$-Airy function, $Ai_q(x)= \sum_{n=0}^{\infty} \dfrac{q^{\frac{n(n-1)}{2}}}{(q^2;q^2)_n} x^n$, satisfies the second-order $q$-difference Equation, see \cite{mou},
\begin{align}\label{kjdsmhrfgfg}
    \frac{1}{q^2} D_{q^{-2}} D_{q^2}y(x)+ \dfrac{x}{q(1-q^2)}D_{q^2} y(x) -\frac{ 1}{q(1-q^2)^2} y(x) =0.
\end{align}
 By comparing  Equation (${\ref{kjdsmhrfgfg}}$) with Equation (${\ref{mkrl}}$), we obtain
  \begin{align*}
    p(x)= \dfrac{x}{\sqrt{q}(1-q)}, \quad \quad r(x) =  -\frac{ 1}{\sqrt{q}(1-q)^2} .
  \end{align*}
Hence $f(x) =   \frac{1}{(q^{\frac{-1}{2}}x^2;q^2)_\infty}$
is a solution of Equation (${\ref{frht5}}$).
Substituting with $ h(x)=\sqrt{q}(1-q)^2$ and $ x^m$
into Equation ({\ref{mkrl}}) gives the $q$-integrals  (${\ref{25844}}$) and (${\ref{kmjnhbgvfd}}$),  respectively.
\end{proof}
\section{Extensions of Lagrangian Method for NHSOqDE}
\subsection{Indefinite $q$-integrals from an
inhomogeneous $q$-difference equation}
In this section we extend the Lagrangian method to include the second-order inhomogeneous $q$-difference equations.
\begin{theorem}\label{jnhsvvffss}
Let $p(x)$, $r(x)$ and $g(x)$ be continuous functions at zero. Let  $y(x)$ be any solution  of the second-order  $q$-difference equation
\begin{equation}\label{mnm}
  \frac{1}{q}D_{q^{-1}} D_q y(x) +  p(x)D_{q^{-1}}y(x) + r(x) y(x) =g(x).
\end{equation}
\text{ Then}
\begin{align}\label{mkvvl}
  &\int f(x) \left( \frac{1}{q} D_{q^{-1}} D_qh(x)+   p(x) D_{q^{-1}} h(x) +r(x) h(x)\right) y(x) d_qx = \nonumber\\&
   f(x/q)\left( y(x/q) D_{q^{-1}}h(x) - h(x/q) D_{q^{-1}}y(x) \right)+\int f(x) h(x) g(x) d_qx,
\end{align}
where $ h(x) $ is an arbitrary function  and $ f(x)$  is a solution of the first order  $q$-difference Equation $({\ref{fht}})$.
\end{theorem}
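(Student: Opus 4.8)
The plan is to follow the proof of Theorem~\ref{bvc} line for line, since the two arguments differ only at the single step where the $q$-difference equation for $y$ is used. I would begin with the term $\frac{1}{q}\int f(x) y(x) D_{q^{-1}} D_q h(x)\, d_qx$ and apply the $q$-integration by parts rule (\ref{555}), then expand the resulting $D_{q^{-1}}(f(x)y(x))$ by the $q$-product rule. A second application of (\ref{555}) transfers the derivative back onto $h$, and after using (\ref{fht}) to absorb the contribution of $D_q f$, one is left with a boundary term, two lower-order integrals (one weighted by $p$, one by $r$), and an integral containing the factor $D_q D_{q^{-1}} y(x)$.

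This last integral is where the homogeneous and nonhomogeneous cases part ways. Using the commutation $\frac{1}{q} D_{q^{-1}} D_q y = D_q D_{q^{-1}} y$, the proof of Theorem~\ref{bvc} simply drops the whole second-order expression via (\ref{mxlk}). Here I would instead invoke the nonhomogeneous equation (\ref{mnm}) in the form $D_q D_{q^{-1}} y(x) = g(x) - p(x) D_{q^{-1}} y(x) - r(x) y(x)$. Substituting this reproduces exactly the same $p$- and $r$-weighted integrals as in the homogeneous case, but now accompanied by the extra term $\int f(x) h(x) g(x)\, d_qx$ coming from the forcing function $g$.

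To finish, I would recombine the left-hand side: adding the integrals $\int f(x) p(x) D_{q^{-1}} h(x)\, y(x)\, d_qx$ and $\int f(x) r(x) h(x)\, y(x)\, d_qx$ reconstitutes the full operator $\frac{1}{q} D_{q^{-1}} D_q h + p D_{q^{-1}} h + r h$ acting on $h$ on the left. These two additions cancel the corresponding lower-order integrals precisely as in the proof of Theorem~\ref{bvc}, so that the only surviving terms are the boundary expression $f(x/q)(y(x/q) D_{q^{-1}} h(x) - h(x/q) D_{q^{-1}} y(x))$ and the new integral $\int f(x) h(x) g(x)\, d_qx$, which together give (\ref{mkvvl}).

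Because the computation is a transcription of an argument already carried out, I do not expect a genuine obstacle. The only point demanding care is tracking the new term $\int f(x) h(x) g(x)\, d_qx$ through the two integration-by-parts steps and verifying that, unlike the $p$- and $r$-integrals, it is \emph{not} cancelled when the full operator on $h$ is reassembled on the left-hand side; everything else is routine $q$-calculus bookkeeping.
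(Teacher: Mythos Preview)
Your proposal is correct and matches the paper's proof essentially line for line: the paper reproduces the integration-by-parts computation from Theorem~\ref{bvc} and, at the step where the $q$-difference equation for $y$ is invoked, picks up the extra term $\int f(x)h(x)g(x)\,d_qx$ from the nonhomogeneous right-hand side. (Indeed, the paper even cites ``Equation~(\ref{mxlk})'' at that step, an evident typo for~(\ref{mnm}); your account is clearer on this point.)
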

\begin{proof}
By applying the  $q$-integration by part rule (${\ref{555}}$)
  \begin{align*}
&\frac{1}{q}\int f(x)y(x)  D_{q^{-1}} D_qh(x) d_qx  =   (D_qh)(x/q)f(x/q) y(x/q)- \frac{1}{q} \int (D_qh)(x/q)D_{q^{-1}}(f(x) y(x)) d_qx.
\end{align*}
Since by The $q$-product rule, we get
\begin{align*}
  \frac{1}{q}\int f(x)y(x)  D_{q^{-1}} D_qh(x) d_qx &= D_{q^{-1}}h(x) f(x/q) y(x/q)- \frac{1}{q} \int (D_qh)(x/q) f(x/q)D_{q^{-1}} y(x)d_qx  \\&-\frac{1}{q} \int (D_qh)(x/q)D_{q^{-1}}(f(x))y(x)d_qx.
\end{align*}
Applying the  $q$-integration by part rule (${\ref{555}}$), we get
\begin{align*}
  \frac{-1}{q} \int (D_qh)(x/q) f(x/q)D_{q^{-1}} y(x)d_qx &= - h(x/q) f(x/q) D_{q^{-1}} y(x)+\int  h(x) f(x) D_q D_{q^{-1}}y(x) d_qx \\&+ \int h(x) D_{q^{-1}} y(x) D_q f(x/q)d_qx.
\end{align*}
Using Equation (${\ref{mxlk}}$) and (${\ref{fht}}$), we obtain
\begin{align*}
 &\frac{1}{q}\int f(x)y(x)  D_{q^{-1}} D_qh(x) d_qx = D_{q^{-1}}h(x) f(x/q) y(x/q) -h(x/q)f(x/q)D_{q^{-1}}y(x)   \\& - \int h(x) f(x) r(x) y(x) d_q(x)  -\int D_{q^{-1}}h(x)f(x) p(x) y(x)d_qx+\int f(x) h(x) g(x) d_qx.
\end{align*}
Then, we get the desired result.
\end{proof}
\begin{theorem}
Let $p(x)$, $r(x)$ and $g(x)$ be continuous functions at zero. Let  $y(x)$ be any solution  of the second-order  $q$-difference equation
\begin{equation}\label{mnkm}
  \frac{1}{q}D_{q^{-1}} D_q y(x) +  p(x)D_q y(x) + r(x) y(x) =g(x).
\end{equation}
\text{ Then}
\begin{align}\label{mkvvbgl}
  &\int f(x) \left( \frac{1}{q} D_{q^{-1}} D_qh(x)+   p(x) D_q h(x) +r(x) h(x)\right) y(x) d_qx = \nonumber\\&
   f(x)\left( y(x) D_{q^{-1}}h(x) - h(x) D_{q^{-1}}y(x) \right)+\int f(x) h(x) g(x) d_qx,
\end{align}
where $ h(x) $ is an arbitrary function  and $ f(x)$  is a solution of the first order  $q$-difference Equation $({\ref{frht5}})$.
\end{theorem}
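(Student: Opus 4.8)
The plan is to mirror the proof of Theorem~$\ref{jnhsvvffss}$, which is the inhomogeneous counterpart of Theorem~$\ref{bvc}$, adjusting every step to the forward $q$-derivative term $p(x)D_q h(x)$ and to the first-order equation~$(\ref{frht5})$ in place of $(\ref{fht})$. Since this statement stands to Theorem~$\ref{brevc}$ exactly as Theorem~$\ref{jnhsvvffss}$ stands to Theorem~$\ref{bvc}$, and since the proof of Theorem~$\ref{brevc}$ was only indicated, I would first carry out the homogeneous derivation for $(\ref{mx})$ explicitly and then graft the inhomogeneous term onto it.

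First I would apply the $q$-integration by parts rule~$(\ref{555})$ to the second-order term $\frac{1}{q}\int f(x)y(x)\,D_{q^{-1}}D_q h(x)\,d_qx$, which yields a boundary contribution together with $-\frac{1}{q}\int (D_q h)(x/q)\,D_{q^{-1}}\bigl(f(x)y(x)\bigr)\,d_qx$. Next I would expand $D_{q^{-1}}(fy)$ by the $q$-product rule. The decisive point is that here $f$ solves $(\ref{frht5})$, that is, $D_q f = p f$ rather than $\frac{1}{q}D_{q^{-1}} f = p f$; consequently the product rule must be organized so that the factor $p(x)f(x)$ is produced in tandem with $D_q y$, not $D_{q^{-1}}y$. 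This is exactly the alignment that forces the boundary term to carry $f(x)$, $y(x)$ and $h(x)$ with no $x\mapsto x/q$ shift, in contrast to the $f(x/q)$, $y(x/q)$, $h(x/q)$ appearing in $(\ref{mkl})$.

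I would then apply $(\ref{555})$ a second time to transfer the derivative off $y$, and substitute the inhomogeneous relation $\frac{1}{q}D_{q^{-1}}D_q y = g - p\,D_q y - r\,y$ read off from $(\ref{mnkm})$. The two homogeneous pieces $-p\,D_q y$ and $-r\,y$ recombine with the corresponding $h$-terms so as to reconstitute the full operator $\tfrac1q D_{q^{-1}}D_q h + p\,D_q h + r\,h$ acting on $h$ under the integral sign on the left, precisely as in Theorem~$\ref{brevc}$; the single surviving remainder is $\int f(x)h(x)g(x)\,d_qx$, which is the extra summand on the right-hand side of $(\ref{mkvvbgl})$.

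The main obstacle I anticipate is the bookkeeping of the argument shifts $x\mapsto x/q$ through the two applications of $(\ref{555})$: one must verify that pairing $D_q$ acting on $y$ with $D_q f = p f$ makes all shifts cancel, so that the boundary data collapse cleanly to $f(x)\bigl(y(x)D_{q^{-1}}h(x)-h(x)D_{q^{-1}}y(x)\bigr)$ rather than to a shifted version. Once this cancellation is confirmed, appending the $g$-term is immediate, and the homogeneous limit $g\equiv 0$ recovers Theorem~$\ref{brevc}$ as a consistency check.
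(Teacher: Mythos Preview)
Your proposal is correct and follows exactly the approach the paper indicates: the paper's own proof consists of the single sentence ``The proof follows similarity as the proof of Theorem~\ref{jnhsvvffss},'' and you have spelled out precisely what that entails, including the key observation that the condition $D_q f = p f$ from~$(\ref{frht5})$ (rather than $\frac{1}{q}D_{q^{-1}}f = pf$) is what causes the boundary term to involve unshifted arguments $f(x)$, $y(x)$, $h(x)$ in place of the shifted ones in~$(\ref{mkvvl})$.
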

\begin{proof}
The proof follows similarity as the proof of Theorem {$\ref{jnhsvvffss}$}.
\end{proof}
\begin{theorem}\label{theorem4}
  For $ \Re (\nu) > \frac{-1}{2}$,  $ \lambda \in \mathbb{C}$, and $\Re (\mu+\nu) > 0$,  we get the $q$-integrals
  \begin{align}\label{222}
 & \int x^{\mu+1} \left(A_{\mu,\nu} x^{-2}+q^{-\nu-1}\lambda^2\right)  H_\nu^{(3)}(\lambda x;q^2) d_qx = \dfrac{\lambda^{1+\nu}C_\nu(q)x^{\nu+\mu+1}}{q^{\nu+1}[\nu+\mu+1]_q } \nonumber\\& + x^{\mu+1} \left(\frac{q^{-\mu}[\mu]_q  }{x}H_\nu^{(3)}(\frac{\lambda x}{q};q^2)-q^{-\mu-1}D_{q^{-1}}  H_\nu^{(3)}(\lambda x;q^2) \right),
\end{align}
or equivalently,
\begin{align}\label{bhnnndujdj3}
 & \int x^{\mu+1} \left(q^{\mu+1} A_{\mu,\nu} x^{-2}+q^{-\nu+\mu}\lambda^2\right)  H_\nu^{(3)}(\lambda x;q^2) d_qx \nonumber\\&= q x^{\mu+1} \left(\frac{[\mu]_q -[\nu]_q }{x}H_\nu^{(3)}(\frac{\lambda x}{q};q^2)+\frac{\lambda}{q} H_{\nu+1}^{(3)}(\lambda x;q^2) \right)\nonumber\\&+\dfrac{\lambda^{1+\nu}C_\nu(q) [\nu-\mu]_q}{q^{\nu-\mu}[\nu+\mu+1]_q[2\nu+1]_q}x^{\nu+\mu+1} .
\end{align}
In particular,
\begin{align}\label{2145gg}
& \int x^{\nu+1}  H_\nu^{(3)}(\lambda x;q^2) d_qx =  \frac{x^{\nu+1}}{\lambda}  H_{\nu+1}^{(3)}(\lambda x;q^2) ,
\end{align}
and
\begin{align}\label{32n5}
& \int x^{1-\nu}  H_\nu^{(3)}(\lambda x;q^2) d_qx =  \frac{-q^\nu}{\lambda} x^{1-\nu} H_{\nu-1}^{(3)}(\frac{\lambda x}{q};q^2) +  \lambda^{\nu-1}C_\nu(q) x,
\end{align}
where $A_{\mu,\nu}$ is defined as in Theorem {$\ref{thmnbs}$} and $C_\nu(q) := \dfrac{(1+q)^{-\nu+1}}{\Gamma_{q^2}(\frac{1}{2})\Gamma_{q^2}(\nu+\frac{1}{2}) }$.
\end{theorem}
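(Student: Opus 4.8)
The plan is to recognize \eqref{222} as a direct instance of the inhomogeneous Lagrangian identity of Theorem~\ref{jnhsvvffss}. The starting point is the non-homogeneous second-order $q$-difference equation satisfied by $y(x)=H_\nu^{(3)}(\lambda x;q^2)$, the $q$-analogue of the classical Struve equation recorded in the Introduction; from \cite{struve} this equation has the form \eqref{mnm} with
\[
 p(x)=\frac{1}{qx},\qquad r(x)=q^{-\nu-1}\lambda^2-\frac{q^{-\nu}[\nu]_q^2}{x^2},\qquad g(x)=\frac{\lambda^{\nu+1}C_\nu(q)}{q^{\nu+1}}\,x^{\nu-1}.
\]
Since $p(x)=1/(qx)$ is exactly the coefficient occurring in the third Jackson $q$-Bessel case of Theorem~\ref{thmn1}, the first-order equation \eqref{fht} is again solved by $f(x)=x$, so Theorem~\ref{jnhsvvffss} applies verbatim.

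First I would substitute $h(x)=x^\mu$ and $f(x)=x$ into \eqref{mkvvl}. Using $D_q x^\mu=[\mu]_q x^{\mu-1}$ and $D_{q^{-1}}x^\mu=q^{1-\mu}[\mu]_q x^{\mu-1}$, the first two terms of the operator collapse: the combination $q^{1-\mu}[\mu-1]_q+q^{-\mu}$ simplifies to $q^{-\mu}[\mu]_q$, so $x\bigl(\tfrac1q D_{q^{-1}}D_q+\tfrac1{qx}D_{q^{-1}}\bigr)x^\mu=q^{-\mu}[\mu]_q^2 x^{\mu-1}$. Adding $x\,r(x)x^\mu$ and invoking $A_{\mu,\nu}=q^{-\mu}[\mu]_q^2-q^{-\nu}[\nu]_q^2$ reproduces precisely the integrand $x^{\mu+1}\bigl(A_{\mu,\nu}x^{-2}+q^{-\nu-1}\lambda^2\bigr)$ on the left-hand side. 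The boundary term $f(x/q)\bigl(y(x/q)D_{q^{-1}}h(x)-h(x/q)D_{q^{-1}}y(x)\bigr)$ is exactly the $H_\nu^{(3)}$-part of the right-hand side of \eqref{222}, while the extra inhomogeneous contribution is the elementary power $q$-integral $\int x\cdot x^\mu g(x)\,d_qx=\frac{\lambda^{\nu+1}C_\nu(q)}{q^{\nu+1}}\int x^{\mu+\nu}\,d_qx=\frac{\lambda^{\nu+1}C_\nu(q)}{q^{\nu+1}[\mu+\nu+1]_q}x^{\mu+\nu+1}$; together these give \eqref{222}.

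To pass to the equivalent form \eqref{bhnnndujdj3} I would multiply \eqref{222} through by $q^{\mu+1}$ and eliminate $D_{q^{-1}}H_\nu^{(3)}(\lambda x;q^2)$ via the inhomogeneous contiguous relation (the Struve analogue of \eqref{4215f}, from \cite{struve})
\[
 D_{q^{-1}}H_\nu^{(3)}(\lambda x;q^2)=\frac{q[\nu]_q}{x}H_\nu^{(3)}\Bigl(\tfrac{\lambda x}{q};q^2\Bigr)-\lambda H_{\nu+1}^{(3)}(\lambda x;q^2)+\frac{\lambda^{\nu+1}C_\nu(q)}{[2\nu+1]_q}x^{\nu}.
\]
Collecting the $x^{\mu+\nu+1}$ terms and applying the identity $[2\nu+1]_q-[\nu-\mu]_q=q^{\nu-\mu}[\nu+\mu+1]_q$ produces the stated constant $\frac{\lambda^{1+\nu}C_\nu(q)[\nu-\mu]_q}{q^{\nu-\mu}[\nu+\mu+1]_q[2\nu+1]_q}$. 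Finally, \eqref{2145gg} is the specialization $\mu=\nu$, where $A_{\nu,\nu}=0$ and $[\nu-\mu]_q=[0]_q=0$ annihilate every term except $\lambda x^{\nu+1}H_{\nu+1}^{(3)}(\lambda x;q^2)$, and \eqref{32n5} is the specialization $\mu=-\nu$ after using the three-term recurrence for $H_\nu^{(3)}$ to rewrite $H_{\nu+1}^{(3)}$ and $H_\nu^{(3)}$ in terms of $H_{\nu-1}^{(3)}$. The main obstacle is the bookkeeping of the two $C_\nu(q)$-proportional pieces: the inhomogeneity $g$ contributes both directly through $\int fhg\,d_qx$ and indirectly through the correction term in the contiguous relation, and these must combine with the correct normalization $C_\nu(q)=(1+q)^{-\nu+1}/\bigl(\Gamma_{q^2}(\tfrac12)\Gamma_{q^2}(\nu+\tfrac12)\bigr)$; pinning down the precise form of $g$ and of the Struve contiguous/recurrence relations from \cite{struve} is the crux.
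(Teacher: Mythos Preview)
Your proposal is correct and follows essentially the same route as the paper: apply Theorem~\ref{jnhsvvffss} with $f(x)=x$ and $h(x)=x^\mu$ to obtain \eqref{222}, then use the Struve contiguous relation from \cite{struve} to pass to \eqref{bhnnndujdj3}, and finally specialize $\mu=\nu$ and $\mu=-\nu$ (with the three-term recurrence) for \eqref{2145gg} and \eqref{32n5}. The only visible difference is cosmetic: the paper records the contiguous relation in the form $xD_{q^{-1}}H_\nu^{(3)}(\lambda x;q^2)-q[\nu]_qH_\nu^{(3)}(\lambda x/q;q^2)=C_\nu(q)(\lambda x)^{\nu+1}-\lambda xH_{\nu+1}^{(3)}(\lambda x;q^2)$, whereas you write it with an explicit $[2\nu+1]_q$ in the inhomogeneous term; your version is the one that is consistent with the constant in \eqref{bhnnndujdj3}, and your identity $[2\nu+1]_q-[\nu-\mu]_q=q^{\nu-\mu}[\nu+\mu+1]_q$ is exactly what is needed to close the bookkeeping.
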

\begin{proof}
The  $q$-Struve function $H_\nu^{(3)}(\lambda x;q^2)$ associated with the  third Jackson $q$-Bessel Equation
 satisfies the  $q$-difference  equation, see \cite[Eq.(21)]{struve}
  \begin{align}\label{mnn}
&\frac{1}{q} D_{q^{-1}}D_q y(x) + \frac{1}{qx} D_{q^{-1}} y(x) +\Big(q^{-\nu-1} \lambda ^{2}  - q^{-\nu} [\nu]^2 _q x^{-2} \Big) y(x)=\dfrac{\lambda^{\nu+1} x^{\nu-1}(1+q)^{-\nu+1}}{q^{\nu+1}\Gamma_{q^2}(\frac{1}{2})\Gamma_{q^2}(\nu+\frac{1}{2}) }.
\end{align}
  Thus,
\begin{align*}
  g(x)=\dfrac{\lambda^{\nu+1} x^{\nu-1}(1+q)^{-\nu+1}}{q^{\nu+1}\Gamma_{q^2}(\frac{1}{2})\Gamma_{q^2}(\nu+\frac{1}{2}) } \quad \quad and \quad  f(x)=x.
\end{align*}
Therefore,
\begin{align}\label{sred}
 \int f(x) h(x) g(x) d_qx = \left(\dfrac{\lambda}{q}\right)^{\nu+1} C_\nu(q) \int x^{\nu} h(x) d_qx .
\end{align}
For $h(x) = x^{\mu}$, Equation (${\ref{sred}}$) will be
\begin{align}\label{jnuuqqww}
 \int f(x) h(x) g(x) d_qx = \dfrac{\lambda^{\nu+1}C_\nu(q)}{q^{\nu+1} [\nu+\mu+1]_q}x^{\nu+\mu+1} .
\end{align}
Substituting with the $q$-integral in (${\ref{jnuuqqww}}$) into Equation (${\ref{mkvvl}}$) gives the   $q$-integral (${\ref{222}}$). The proof of (${\ref{bhnnndujdj3}}$) follows
by using \cite[Eq.(18)]{struve} (with $x$ is replaced by $\frac{x}{q}$ ) to obtain
\begin{align*}
  &x D_{q^{-1}} H_\nu^{(3)}(\lambda x;q^2)-q[\nu]_q H_\nu^{(3)}(\frac{\lambda x}{q};q^2)=C_\nu(q)(\lambda x)^{\nu+1}-\lambda x  H_{\nu+1}^{(3)}(\lambda x;q^2),\end{align*}
and  substituting in Equation (${\ref{222}}$). Substituting with $\nu=\mu$ yields (${\ref{2145gg}}$). Substituting with $\mu=-\nu$, in Equation (${\ref{bhnnndujdj3}}$) and using
\cite[Eq.(15)]{struve}(with $x$ is replaced by $\frac{x}{q}$ and multiplying with  $x^{1-\nu}$ ) to obtain
\begin{align*}
& x^{1-\nu} H_{\nu-1}^{(3)}(\frac{\lambda x}{q};q^2)+q^{\nu} x^{1-\nu} H_{\nu+1}^{(3)}(\lambda x;q^2)= \frac{q[2\nu]_qx^{-\nu}}{\lambda} H_\nu^{(3)}(\frac{\lambda x}{q};q^2)+\frac{(\lambda q)^{\nu} C_\nu(q)}{ [2\nu+1]_q} x ,
\end{align*}
we get (${\ref{32n5}}$), and completes the proof.
\end{proof}
\begin{theorem}
 For $ \Re  (\nu) > \frac{-1}{2} $. Let
 \begin{align*}
 & \tilde{A}_{n,\nu}:= q^{\frac{1}{2}(3n+\nu+3)}A_{n,\nu}, \quad \quad \hat{A}_{n,\nu} = q^{\frac{1}{2}(3n+\nu)}A_{n,\nu},\\&
  O_{n,\nu}(x) := \frac{q^{\frac{1}{2}(n+\nu+3)}[n]}{x} \sin(q^{\frac{-1}{2}(\nu+n+1)}x;q)+  \cos( q^{\frac{-1}{2}(\nu+n+2)} x;q),\\&
\tilde{O}_{n,\nu}(x) := \frac{q^{\frac{1}{2}(n+\nu)}[n]}{x} \cos(q^{\frac{-1}{2}(\nu+n+1)}x;q)-  \sin( q^{\frac{-1}{2}(\nu+n+2)} x;q).
 \end{align*}
 Then
\begin{align*}
&\int  \left(  [2n+1]_q  x^n \cos(q^{\frac{-1}{2}(n+\nu+2)}x;q)+x^{n-1} \tilde{A}_{n,\nu}  \sin(q^{\frac{-1}{2}(n+\nu+1)}x;q)  \right)  H_\nu^{(3)}( x;q^2) d_qx \nonumber \\&= x^{n+1} O_{n,\nu}(x) H_\nu^{(3)}( \frac{x}{q};q^2)-q^{\frac{1}{2}(n+\nu+1)}x^{n}\sin(q^{\frac{-1}{2}(n+\nu+3)}x;q) D_{q^{-1}}H_\nu^{(3)}( x;q^2)\\&+ \frac{q^{n-\nu} C_\nu(q) x^{n+\nu+2}}{[n+\nu+2]_q} \,_2\phi_2(0,q^{\nu+n+2};q^{3},q^{\nu+n+4};q^2,q^{\nu+m+1}(1-q)^2 x^2),
\end{align*}
and
\begin{align*}
&\int  \left(  - [2n+1]_q  x^n \sin(q^{\frac{-1}{2}(n+\nu+2)}x;q)+x^{n-1} \hat{A}_{n,\nu}  \cos(q^{\frac{-1}{2}(n+\nu+1)}x;q)  \right)  H_\nu^{(3)}( x;q^2) d_qx \nonumber \\&= x^{n+1}  \tilde{O}_{n,\nu}(x) H_\nu^{(3)}( \frac{x}{q};q^2)-q^{n+\frac{\nu}{2}-1}x^{n}\cos(q^{\frac{-1}{2}(n+\nu+3)}x;q) D_{q^{-1}}H_\nu^{(3)}( x;q^2)\\&+ \frac{q^{\frac{-1}{2}(\nu-3n+2)}C_\nu(q) x^{\nu+n+1}}{[\nu+n+1]_q} \,_2\phi_2(0,q^{\nu+n+1};q^{3},q^{\nu+n+3};q^2,q^{-n-\nu}(1-q)^2 x^2),
\end{align*}
where $A_{n,\nu}$  is defined as in Theorem {$\ref{thmnbs}$} and $C_\nu(q)$ is defined as  in Theorem {$\ref{theorem4}$}.
\end{theorem}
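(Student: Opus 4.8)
The plan is to specialise the inhomogeneous Lagrangian identity~(\ref{mkvvl}) to the third $q$-Struve function, in the same way that Theorem~\ref{thmnbs} specialised the homogeneous identity~(\ref{mkl}) to $J_\nu^{(3)}$. By the proof of Theorem~\ref{theorem4}, the function $H_\nu^{(3)}(x;q^2)$ (the case $\lambda=1$) is a solution of~(\ref{mnn}), so that in the notation of Theorem~\ref{jnhsvvffss} one has
\begin{align*}
p(x)=\frac{1}{qx},\quad r(x)=q^{-\nu-1}-q^{-\nu}[\nu]_q^2\,x^{-2},\quad g(x)=q^{-\nu-1}C_\nu(q)\,x^{\nu-1},
\end{align*}
and $f(x)=x$ solves~(\ref{fht}). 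First I would insert $h(x)=x^{n}\sin(q^{-\frac12(n+\nu+1)}x;q)$ into~(\ref{mkvvl}) to obtain the first identity, and then $h(x)=x^{n}\cos(q^{-\frac12(n+\nu+1)}x;q)$ for the second; the two computations are mirror images of one another.

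The first step is to expand the operator $\tfrac1q D_{q^{-1}}D_q h+\tfrac1{qx}D_{q^{-1}}h+r(x)h$ on this $h$. Using the $q$-scaling rule $D_q[u(q^{\gamma}x)]=q^{\gamma}(D_qu)(q^{\gamma}x)$ together with $D_q\sin(z;q)=\cos(q^{1/2}z;q)$ and $D_q\cos(z;q)=-q^{1/2}\sin(q^{1/2}z;q)$ and the $q$-Leibniz rule, each summand produces $q$-sine and $q$-cosine terms whose arguments differ by half-integer powers of $q$; after collapsing these, the $x^{n+1}$-weight $\sin$-contributions cancel and exactly the stated integrand $[2n+1]_q x^{n}\cos(q^{-\frac12(n+\nu+2)}x;q)+x^{n-1}\tilde A_{n,\nu}\sin(q^{-\frac12(n+\nu+1)}x;q)$ survives. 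In parallel, the homogeneous part of the boundary term $f(x/q)\bigl(H_\nu^{(3)}(x/q;q^2)D_{q^{-1}}h(x)-h(x/q)D_{q^{-1}}H_\nu^{(3)}(x;q^2)\bigr)$ is read off from the same rules: $D_{q^{-1}}h$ packages into $O_{n,\nu}(x)H_\nu^{(3)}(x/q;q^2)$, while $h(x/q)=q^{-n}x^{n}\sin(q^{-\frac12(n+\nu+3)}x;q)$ fixes the coefficient of $D_{q^{-1}}H_\nu^{(3)}(x;q^2)$. Unlike in Theorem~\ref{thmnbs}, no contiguous relation for $H_\nu^{(3)}$ is used, since the statement retains $D_{q^{-1}}H_\nu^{(3)}$ explicitly.

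The genuinely new contribution is the source integral $\int f(x)h(x)g(x)\,d_qx$, which by the above reduces to $q^{-\nu-1}C_\nu(q)\int x^{n+\nu}\sin(q^{-\frac12(n+\nu+1)}x;q)\,d_qx$. Substituting the defining series $\sin(z;q)=\sum_{k\ge0}(-1)^k q^{k^2+k}z^{2k+1}/\Gamma_q(2k+2)$ and integrating monomials by $\int x^{m}\,d_qx=x^{m+1}/[m+1]_q$ gives a power series in $x^{2}$ whose $k$-th summand carries the factors $q^{k^2+k}$, $q^{-(n+\nu+1)k}$, $1/\Gamma_q(2k+2)$ and $1/[n+\nu+2k+2]_q$. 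The main obstacle is to recognise this as a basic hypergeometric series. The decisive identities are the duplication $\Gamma_q(2k+2)=(q^{3};q^{2})_k(q^{2};q^{2})_k/(1-q)^{2k}$, which follows from $(q;q)_{2k+1}=(q;q^{2})_{k+1}(q^{2};q^{2})_k$, and $1/[n+\nu+2k+2]_q=(1-q)(q^{n+\nu+2};q^{2})_k/\bigl((1-q^{n+\nu+2})(q^{n+\nu+4};q^{2})_k\bigr)$. Inserting both turns the summand into the standard $\,_2\phi_2$ term in base $q^{2}$ with numerator parameter $q^{n+\nu+2}$, denominator parameters $q^{3},q^{n+\nu+4}$ and argument proportional to $(1-q)^2x^2$; factoring out the $k=0$ term $x^{n+\nu+2}/[n+\nu+2]_q$ then yields the $\,_2\phi_2$ contribution in the claim.

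Adding the boundary term and this source term and tidying the powers of $q$ completes the first identity. The second is obtained verbatim with $\cos$ in place of $\sin$, the pair $(O_{n,\nu},\tilde O_{n,\nu})$ interchanged and the $\,_2\phi_2$ parameters shifted by one, the source integral $\int x^{n+\nu}\cos(q^{-\frac12(n+\nu+1)}x;q)\,d_qx$ being converted by the same series-to-$\,_2\phi_2$ manipulation. The only real labour is the bookkeeping of the half-integer powers of $q$ in these steps; the identity~(\ref{mkvvl}) supplies all of the structural content.
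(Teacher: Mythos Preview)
Your approach is exactly the paper's: substitute $h(x)=x^{n}\sin(q^{-\frac12(n+\nu+1)}x;q)$ and $h(x)=x^{n}\cos(q^{-\frac12(n+\nu+1)}x;q)$ with $\lambda=1$ into the inhomogeneous identity~(\ref{mkvvl}), the paper's proof consisting of precisely that one sentence. Your expansion of the source integral into a $\,_2\phi_2$ via the duplication of $\Gamma_q(2k+2)$ and the $[n+\nu+2k+2]_q$-to-Pochhammer conversion supplies the computational detail that the paper omits entirely.
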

\begin{proof}
Substituting with $$h(x)= x^n \sin(q^{\frac{-1}{2}(n+\nu+1)}x;q)$$, and $h(x)=x^n\cos(q^{\frac{-1}{2}(n+\nu+1)}x;q)$ , respectively, with $\lambda=1$  into Equation ({\ref{mkvvl}}), we get the desired result.
\end{proof}
\begin{theorem}\label{thm7}
 For $ \Re \, (\nu) > \frac{-1}{2}$,  $ \lambda \in \mathbb{C} $, $x \in \mathbb{C} $ and $\Re \, (\mu+\nu) > 0$,  we get the $q$-integrals
\begin{align}\label{333}
 &\int \dfrac{x^{\mu+1}}{(- x^2  \lambda^2(1-q)^2;q^2)_\infty} \left( A_{\mu,\nu} x^{-2}+q^{\mu}\lambda^2\right) H_\nu^{(2)}(\lambda x|q^2) d_qx \nonumber\\&= \frac{x^{\mu+1}}{(- x^2  \lambda^2(1-q)^2;q^2)_\infty} \left(\frac{q^{-\mu}[\mu]_q}{x}H_\nu^{(2)}(\lambda x|q^2)-\frac{1}{q}  D_{q^{-1}}H_\nu^{(2)}(\lambda x|q^2) \right)\nonumber\\&+\left(\dfrac{\lambda}{q}\right)^{\nu+1} C_\nu(q) \int \dfrac{x^{\nu+\mu}}{(- x^2  \lambda^2(1-q)^2;q^2)_\infty}  d_qx.
\end{align}
In particular,
\begin{align}\label{214bgd}
  &\int \dfrac{x^{\nu+1}}{(- x^2  \lambda^2(1-q)^2;q^2)_\infty} H_\nu^{(2)}(\lambda x|q^2) d_qx =\frac{ q x^{\nu+1}}{\lambda(- x^2  \lambda^2(1-q)^2;q^2)_\infty}H_{\nu+1}^{(2)}(\lambda x|q^2)
\nonumber\\& - \dfrac{\lambda^{\nu-1}}{q^{2\nu+1} } C_\nu(q) \left(\frac{ x^{2\nu+1}}{[2\nu+1]_q(- x^2  \lambda^2(1-q)^2;q^2)_\infty}-\int \dfrac{x^{2\nu}}{(- x^2  \lambda^2(1-q)^2;q^2)_\infty}  d_qx \right),\end{align}
and
\begin{align}\label{21lmhf}
 &\int \dfrac{x^{1-\nu}}{(- x^2 \lambda^2(1-q)^2;q^2)_\infty}  H_\nu^{(2)}(\lambda x|q^2) d_qx  = \frac{x^{1-\nu}}{q\lambda(- x^2  \lambda^2(1-q)^2;q^2)_\infty} H_{\nu-1}^{(2)}(\lambda x|q^2) \nonumber \\& +\dfrac{1}{q} \lambda^{\nu-1} C_\nu(q) \int \dfrac{1}{(- x^2  \lambda^2(1-q)^2;q^2)_\infty}
   d_qx,
\end{align}
where $A_{\mu,\nu}$ and $C_\nu(q)$  are defined as in Theorem {$\ref{thmnbs}$} and   Theorem {$\ref{theorem4}$}, respectively.
\end{theorem}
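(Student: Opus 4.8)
The plan is to handle $H_\nu^{(2)}(\lambda x|q^2)$ by the same device used for the second Jackson $q$-Bessel function in Theorem~\ref{thmn2}, but now through the inhomogeneous formula \eqref{mkvvbgl}. First I would record from \cite{struve} the second-order $q$-difference equation for $H_\nu^{(2)}(\lambda x|q^2)$; it has the same homogeneous part as the Jackson equation \eqref{mnbvxx}, namely $p(x)=\frac{1-q\lambda^2(1-q)x^2}{x}$ and $r(x)=\frac{q\lambda^2x^2-q^{1-\nu}[\nu]_q^2}{x^2}$, together with the $q$-Struve forcing term $g(x)=q^{-\nu}\lambda^{\nu+1}C_\nu(q)\,x^{\nu-1}$. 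Because $p(x)$ coincides with the $J_\nu^{(2)}$ case, Theorem~\ref{thmn2} already hands me the solution $f(x)=\dfrac{x}{W(x)}$ of \eqref{frht5}, where $W(x)=(-x^2\lambda^2(1-q)^2;q^2)_\infty$; no new auxiliary equation must be solved.

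Next I would substitute $h(x)=x^\mu$ into \eqref{mkvvbgl}. With $D_qx^\mu=[\mu]_qx^{\mu-1}$ and $D_{q^{-1}}x^\mu=q^{1-\mu}[\mu]_qx^{\mu-1}$, every term picks up a common factor $q$, exactly as in the passage from \eqref{monh} to \eqref{dodoo}: the left operator collapses to $f(x)\,(L_{\nu,q}^{(2)}x^\mu)(x)=q\,\frac{x^{\mu+1}}{W(x)}\bigl(A_{\mu,\nu}x^{-2}+q^\mu\lambda^2\bigr)$, with the coefficient $A_{\mu,\nu}=q^{-\mu}[\mu]_q^2-q^{-\nu}[\nu]_q^2$ surfacing through the elementary identities $[\mu]_q-[\mu-1]_q=q^{\mu-1}$ and $1-(1-q)[\mu]_q=q^\mu$; the boundary term becomes $q\,\frac{x^{\mu+1}}{W(x)}\bigl(\frac{q^{-\mu}[\mu]_q}{x}H_\nu^{(2)}-\frac{1}{q}D_{q^{-1}}H_\nu^{(2)}\bigr)$; and the forcing contributes $\int f(x)x^\mu g(x)\,d_qx=q\,(\lambda/q)^{\nu+1}C_\nu(q)\int\frac{x^{\nu+\mu}}{W(x)}\,d_qx$. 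Cancelling the overall factor $q$ reproduces \eqref{333}.

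For the two ``in particular'' identities I would specialise $\mu$ and feed in the $q$-Struve contiguous relations of \cite{struve}. Taking $\mu=\nu$ makes $A_{\nu,\nu}=0$, so the left side of \eqref{333} reduces to $q^\nu\lambda^2\int\frac{x^{\nu+1}}{W(x)}H_\nu^{(2)}(\lambda x|q^2)\,d_qx$; after dividing by $q^\nu\lambda^2$ I would rewrite $D_{q^{-1}}H_\nu^{(2)}$ through the Struve analogue of \eqref{equ25}, which differs from the Bessel relation by an additive term proportional to $C_\nu(q)x^\nu$. The two $H_\nu^{(2)}$ pieces then cancel, leaving an $H_{\nu+1}^{(2)}$ term and a residual power of $x$; the latter combines with the $\int\frac{x^{2\nu}}{W(x)}\,d_qx$ produced by $\int f x^\nu g\,d_qx$ into precisely the bracketed expression of \eqref{214bgd}. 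For $\mu=-\nu$ the surviving integral is $\int\frac{1}{W(x)}\,d_qx$, and I would instead invoke the Struve counterpart of the three-term relation \eqref{254fdff} to exchange $H_{\nu+1}^{(2)}$ for $H_{\nu-1}^{(2)}$, which gives \eqref{21lmhf}.

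The operator computation of the second paragraph is entirely routine and parallels Theorem~\ref{thmn2}. The genuine obstacle lies in the final step: since $H_\nu^{(2)}$ solves an inhomogeneous equation, the $C_\nu(q)$ contributions coming from $g(x)$ and from the contiguous relations do not annihilate one another the way the homogeneous Bessel terms do, so the specialisations $\mu=\pm\nu$ really do retain the un-evaluated $q$-integrals $\int\frac{x^{2\nu}}{W(x)}\,d_qx$ and $\int\frac{1}{W(x)}\,d_qx$. Keeping the powers of $q$, the normalising factor $[2\nu+1]_q$, the constant $C_\nu(q)$ and these residual integrals correctly aligned is the only delicate bookkeeping, and it is exactly where a stray $q$-power or sign would slip in.
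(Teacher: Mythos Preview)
Your proposal is correct and follows essentially the same route as the paper: you apply the inhomogeneous Lagrangian identity \eqref{mkvvbgl} with the same $p,r,g,f$ and $h(x)=x^\mu$, then specialise $\mu=\pm\nu$ using the Struve contiguous relations from \cite{struve}. The only cosmetic difference is that the paper first records the rewritten form of \eqref{333} with $D_{q^{-1}}H_\nu^{(2)}$ eliminated (its equation \eqref{jnnndhnndnndn}) and then sets $\mu=\pm\nu$ there, whereas you specialise first and then invoke the contiguous relation; the two orders are equivalent, and your explicit tracking of the global factor $q$ (which the paper suppresses) is a welcome clarification.
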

\begin{proof}
The  $q$-Struve function $H_\nu^{(2)}(\lambda x|q^2)$ associated with the second Jackson $q$-Bessel Equation(${\ref{mnbvxx}}$) satisfies the  $q$-difference  equation  \cite[Eq.(40)]{struve}
  \begin{align}\label{mnnv}
&\frac{1}{q} D_{q^{-1}}D_q y(x) + \left(\dfrac{1}{x}-q \lambda^2 x (1-q)\right) D_{q} y(x) +
\left(q  \lambda ^{2}   -\frac{ q^{1-\nu} [\nu]^2_q }{x^2}\right) y(x)\nonumber\\&=\dfrac{\lambda^{\nu+1} C_\nu(q)  }{q^\nu }x^{\nu-1}.
\end{align}
 Thus,
\begin{align*}
  g(x)=\dfrac{\lambda^{\nu+1} C_\nu(q) }{q^\nu  }x^{\nu-1} \quad \quad and  \quad f(x)=\dfrac{x}{(- x^2  \lambda^2(1-q)^2;q^2)_\infty},
\end{align*}
with $h(x) = x^{\mu}$, we get
\begin{align*}
 \int f(x) h(x) g(x) d_qx = \dfrac{\lambda^{\nu+1} C_\nu(q)}{q^\nu }\int \dfrac{x^{\nu+\mu}}{(- x^2  \lambda^2(1-q)^2;q^2)_\infty} d_qx .
\end{align*}
Substituting  into Equation (${\ref{mkvvbgl}}$)  gives the  $q$-integral (${\ref{333}}$).
Using \cite[Eq.(39)]{struve} (with $x$ is replaced by $\frac{x}{q}$ ) to obtain
\begin{align}\label{impot}
  & D_{q^{-1}}H_\nu^{(2)}(\lambda x|q^2)-\frac{q^{1-\nu} [\nu]_q}{x}H_\nu^{(2)}(\lambda x|q^2)=\frac{\lambda^{\nu+1}(1+q)^{-\nu}x^\nu}{q^\nu \Gamma_{q^2}(\frac{1}{2})\Gamma_{q^2}(\nu+\frac{3}{2})}-\lambda q^{\nu+2}H_{\nu+1}^{(2)}(\lambda x|q^2),
\end{align}
Equation (${\ref{333}}$) can be represented as
\begin{align}\label{jnnndhnndnndn}
 &\int \dfrac{x^{\mu+1}}{(- x^2  \lambda^2(1-q)^2;q^2)_\infty} \left( A_{\mu,\nu} x^{-2}+q^{\mu}\lambda^2\right) H_\nu^{(2)}(\lambda x|q^2) d_qx \nonumber\\&= \frac{x^{\mu+1}}{(- x^2  \lambda^2(1-q)^2;q^2)_\infty} \left(\frac{q^{-\mu}[\mu]_q-q^{1-\nu}[\nu]}{x}H_\nu^{(2)}(\lambda x|q^2)+ q^{\nu+1} \lambda H_{\nu+1}^{(2)}(\lambda x|q^2) \right)\nonumber\\&-\left(\dfrac{\lambda}{q}\right)^{\nu+1} C_\nu(q) \left( \frac{x^{\nu+\mu+1}}{[2\nu+1]_q (- x^2  \lambda^2(1-q)^2;q^2)_\infty}-\int \dfrac{x^{\nu+\mu}}{(- x^2  \lambda^2(1-q)^2;q^2)_\infty}  d_qx \right).
\end{align}
The proof of $({\ref{214bgd}})$ follows by
 substituting  with $\nu=\mu$ in $({\ref{jnnndhnndnndn}})$. Substituting with $\mu=-\nu$, in Equation (${\ref{jnnndhnndnndn}}$), using (${\ref{impot}}$) and
\cite{struve}(with $x$ is replaced by $\frac{x}{q}$ )
 \begin{align*}
  \frac{[\nu]_q}{x} H_\nu^{(2)}(\lambda x|q^2)+\frac{1}{q}D_{q^{-1}}H_\nu^{(2)}(\lambda x|q^2)=\lambda q^{-(\nu+1)}H_{\nu-1}^{(2)}(\lambda x|q^2),
 \end{align*}
 to obtain
 \begin{align*}
 & \lambda q^{\nu+1}x^{1-\nu} H_{\nu+1}^{(2)}(\lambda x|q^2)-\frac{q^{-\nu}[2\nu]}{x^\nu} H_\nu^{(2)}(\lambda x|q^2)=\\&\frac{\lambda^{\nu+1} C_\nu(q) x}{q^{\nu+1}[2\nu+1]_q  }+\lambda q^{-\nu-1} x^{1-\nu} H_{\nu-1}^{(2)}(\lambda x|q^2),
 \end{align*}
we get (${\ref{21lmhf}}$), and completes the proof.
\end{proof}
\begin{remark}
   For $ \Re  (\nu) > -\frac{1}{2}$,  $ \lambda \in \mathbb{C} $, $\mid \lambda x (1-q)\mid < 1 $ and $\Re  (\mu+\nu) > 0$, Equation (${\ref{333}}$) can be represented as
\begin{align*}
&\int \dfrac{x^{\mu+1}}{(- x^2  \lambda^2(1-q)^2;q^2)_\infty} \left( A_{\mu,\nu} x^{-2}+q^{\mu}\lambda^2\right) H_\nu^{(2)}(\lambda x|q^2) d_qx \nonumber\\&= \frac{x^{\mu+1}}{(- x^2  \lambda^2(1-q)^2;q^2)_\infty} \left(\frac{q^{-\mu}[\mu]_q}{x}H_\nu^{(2)}(\lambda x|q^2)-\frac{1}{q}  D_{q^{-1}}H_\nu^{(2)}(\lambda x|q^2) \right)\nonumber\\&+\dfrac{\lambda^{\nu+1} C_\nu(q) x^{\nu+\mu+1}}{q^{\nu+1}  [\nu+\mu+1]_q}\,_2\phi_1(0,q^{\nu+\mu+1};q^{\nu+\mu+3};q^2,-x^2  \lambda^2(1-q)^2).
\end{align*}
\end{remark}
\begin{theorem}
 For $ \Re  (\nu) > -\frac{1}{2} $,  let
 \begin{align*}
   &B_{n,\nu}(x) := \frac{q^{-n}[n]_q}{x}\S_q (q^{n+\frac{1}{2}}x) +q^{\frac{3}{2}}\c_q(q^{n+\frac{1}{2}}x),\\&
 \tilde{B}_{n,\nu}(x) := \frac{q^{-n}[n]_q}{x}\c_q (q^{n+\frac{1}{2}}x) -q^{\frac{3}{2}}\S_q(q^{n+\frac{1}{2}}x).
 \end{align*}
 Then
 \begin{align*}
   &\int \Big(x^{n-1} A_{n,\nu}    \S_q(q^{n+\frac{3}{2}}x)  +x^n C_{n,\nu}(x)  \c_q(q^{n+\frac{3}{2}}x) \Big)  \frac{H_\nu^{(2)}( x |q^2)}{(- x^2  (1-q)^2;q^2)_\infty}  d_qx \nonumber\\&=\frac{x^{n+1}}{(- x^2  (1-q)^2;q^2)_\infty}\Big( B_{n,\nu}(x) H_\nu^{(2)}(  x |q^2)-q  \S_q (q^{n+\frac{1}{2}}x) D_{q^{-1}} H_\nu^{(2)}( x|q^2)\Big)
   \\&+q^{1-\nu} C_\nu(q) \int \dfrac{x^{\nu+n}\S_q(q^{n+\frac{1}{2}}x)}{(- x^2  (1-q)^2;q^2)_\infty}  d_qx,
  \end{align*}
  and
  \begin{align*}
   &\int \Big(x^{n-1} A_{n,\nu}    \c_q(q^{n+\frac{3}{2}}x)  -x^n C_{n,\nu}(x)  \S_q(q^{n+\frac{3}{2}}x) \Big)  \frac{H_\nu^{(2)}( x |q^2)}{(- x^2  (1-q)^2;q^2)_\infty}  d_qx \nonumber\\&=\frac{x^{n+1}}{(- x^2  (1-q)^2;q^2)_\infty}\Big(\tilde{B}_{n,\nu}(x) H_\nu^{(2)}(  x |q^2)-q  \c_q (q^{n+\frac{1}{2}}x) D_{q^{-1}} H_\nu^{(2)}( x|q^2)\Big)
   \\&+q^{1-\nu} C_\nu(q)\int \dfrac{x^{\nu+n}\c_q(q^{n+\frac{1}{2}}x)}{(- x^2  (1-q)^2;q^2)_\infty}   d_qx,
  \end{align*}
  where $A_{n,\nu}$, $C_{n,\nu}(x)$ and $C_\nu(q)$ are defined as in Theorem {$\ref{thmnbs}$},  Theorem {$\ref{2514b4}$} and Theorem {$\ref{theorem4}$}, respectively.
  \end{theorem}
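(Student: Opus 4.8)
The plan is to treat this as the inhomogeneous counterpart of Theorem~\ref{2514b4}: the same two substitutions for $h$ that produced the homogeneous $q$-Bessel identities \eqref{fdscss} and \eqref{fdscshnj} will now produce $q$-Struve identities, the only new ingredient being the extra integral contributed by the forcing term. I would begin from Theorem~\ref{thm7}, where (with $\lambda=1$) the function $H_\nu^{(2)}(x|q^2)$ is shown to satisfy the inhomogeneous equation \eqref{mnnv}, so that
\[
p(x)=\frac1x-qx(1-q),\qquad r(x)=q-\frac{q^{1-\nu}[\nu]^2_q}{x^2},\qquad g(x)=\frac{C_\nu(q)}{q^\nu}\,x^{\nu-1},
\]
and $f(x)=x/(-x^2(1-q)^2;q^2)_\infty$ solves \eqref{frht5}. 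Since \eqref{mnnv} is of the type \eqref{mnkm}, the relevant master identity is \eqref{mkvvbgl}.

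The core of the argument is to substitute $h(x)=x^n\S_q(q^{n+\frac12}x)$ into \eqref{mkvvbgl} and evaluate the operator $\frac1qD_{q^{-1}}D_qh+p(x)D_qh+r(x)h$. Here I would differentiate using the rules $D_q\S_q z=\c_q(qz)$ and $D_q\c_q z=-\S_q(qz)$ together with the $q$-product rule; differentiating $x^n$ produces the $[2n+1]_q$ and $A_{n,\nu}$ coefficients, while each application of $D_q$ shifts the trigonometric argument from $q^{n+\frac12}x$ to $q^{n+\frac32}x$. To collapse the mixture of arguments into the single form displayed in the statement I would use the same shift identity as in Theorem~\ref{2514b4},
\[
\S_q(q^{n+\frac12}x)=q^{n+\frac12}(1-q)x\,\c_q(q^{n+\frac32}x)+\S_q(q^{n+\frac32}x),
\]
and its $\c_q$-analogue; this is precisely what turns the left-hand integrand into $x^{n-1}A_{n,\nu}\S_q(q^{n+\frac32}x)+x^nC_{n,\nu}(x)\c_q(q^{n+\frac32}x)$.

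For the right-hand side, I would compute the bilinear term $f(x)\bigl(H_\nu^{(2)}(x|q^2)D_{q^{-1}}h-h\,D_{q^{-1}}H_\nu^{(2)}(x|q^2)\bigr)$ by evaluating $D_{q^{-1}}h$ through the $q^{-1}$-forms of the trigonometric rules and the $q$-product rule: the coefficient of $H_\nu^{(2)}(x|q^2)$ assembles into $B_{n,\nu}(x)$, and the remaining piece reproduces the $\S_q(q^{n+\frac12}x)D_{q^{-1}}H_\nu^{(2)}(x|q^2)$ contribution. The forcing term finally yields $\int f(x)h(x)g(x)\,d_qx$, which after inserting $f$, $g$ and $h$ becomes exactly the nonhomogeneous integral displayed on the right. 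A point worth noting is that, in contrast to Theorem~\ref{2514b4} (and to Theorem~\ref{thm7} itself), no analogue of \eqref{impot} is applied here, so both $D_{q^{-1}}H_\nu^{(2)}$ and the forcing integral are left unsimplified; this is why the $[\nu]_q$ term absent from $B_{n,\nu}(x)$ remains hidden inside $D_{q^{-1}}H_\nu^{(2)}$. The second identity follows verbatim with $h(x)=x^n\c_q(q^{n+\frac12}x)$. I expect the main obstacle to be the purely mechanical but error-prone bookkeeping of powers of $q$ in the operator computation — keeping track of the argument shifts generated by each $D_q$ and $D_{q^{-1}}$ and by the product rule, so that the coefficients consolidate cleanly into $A_{n,\nu}$, $C_{n,\nu}(x)$ and $B_{n,\nu}(x)$.
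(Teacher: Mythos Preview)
Your proposal is correct and follows essentially the same approach as the paper: substitute $h(x)=x^n\S_q(q^{n+\frac12}x)$ and $h(x)=x^n\c_q(q^{n+\frac12}x)$ into the master identity \eqref{mkvvbgl} with $\lambda=1$. Your write-up is in fact considerably more detailed than the paper's own proof, which consists of a single sentence naming these substitutions; your observations about reusing the operator computation from Theorem~\ref{2514b4} and leaving $D_{q^{-1}}H_\nu^{(2)}$ unsimplified are accurate elaborations of what the paper leaves implicit.
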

  \begin{proof}
Substituting   with $h(x)= x^n \S_q(q^{n+\frac{1}{2}}x)$, and $h(x)=  x^n \c_q(q^{n+\frac{1}{2}}x)$, respectively, into Equation ({\ref{mkvvbgl}}) with $\lambda=1$,   we get the   desired result.
  \end{proof}
\begin{theorem}\label{thm54}
  For $ \Re  {\nu} > \frac{-1}{2}$,  $ \lambda \in \mathbb{C}$, and $\Re  (\mu+\nu) > 0$,
  \begin{align*}
&\int x^{\mu+1}(- x^2  \lambda^2(1-q)^2;q^2)_\infty \left(A_{\mu,\nu} x^{-2}+q^{-(\mu+2)}\lambda^2\right) H_\nu^{(1)}(\lambda x|q^2) d_qx \nonumber \\& =  x^{\mu+1}(- \frac{\lambda^2}{q^2}(1-q)^2 x^2;q^2)_\infty \left(\frac{q^{-\mu}[\mu]_q}{x}H_\nu^{(1)}(\frac{\lambda x}{q}|q^2)-q^{-\mu-1} D_{q^{-1}}H_\nu^{(1)}(\lambda x|q^2) \right)\nonumber \\&+\dfrac{\lambda^{\nu+1}  C_\nu(q) x^{\nu+\mu+1}}{q^{\nu+1} [\nu+\mu+1]_q} \,_1\phi_1(q^{\nu+\mu+1};q^{\nu+\mu+3};q^2,-x^2  \lambda^2(1-q)^2).
\end{align*}
In particular,
\begin{align*}
&\int x^{1+\nu}(- x^2  \lambda^2(1-q)^2;q^2)_\infty  H_\nu^{(1)}(\lambda x|q^2) d_qx =\frac{ {(qx)}^{1+\nu}}{\lambda}(- \frac{\lambda^2}{q^2}(1-q)^2 x^2;q^2)_\infty H_{\nu+1}^{(1)}(\frac{\lambda x}{q}|q^2)\nonumber\\&+ \dfrac{q \lambda^{\nu-1} C_\nu(q)x^{2\nu+1}}{[2\nu+1]_q } \Big(\,_1\phi_1(q^{2\nu+1};q^{2\nu+3};q^2,-x^2  \lambda^2(1-q)^2) \nonumber- (-  \frac{\lambda^2}{q^2}(1-q)^2 x^2;q^2)_\infty \Big),
\end{align*}
and
\begin{align*}
&\int x^{1-\nu}(- x^2  \lambda^2(1-q)^2;q^2)_\infty H_\nu^{(1)}(\lambda x|q^2) d_qx =\frac{ - {(qx)}^{1-\nu}}{\lambda}(- \frac{\lambda^2}{q^2}(1-q)^2 x^2;q^2)_\infty  H_{\nu-1}^{(1)}(\frac{\lambda x}{q}|q^2)\nonumber\\&+q^{-2\nu+1}\lambda^{\nu-1} C_\nu(q) x \,_1\phi_1(q;q^{3};q^2,-x^2  \lambda^2(1-q)^2),
\end{align*}
where $A_{\mu,\nu}$ and $C_\nu(q)$ are defined as in Theorems {$\ref{thmnbs}$} and  {$\ref{theorem4}$}, respectively.
\end{theorem}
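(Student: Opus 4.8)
The plan is to mirror the derivations of Theorems \ref{theorem4} and \ref{thm7}, now applied to the first $q$-Struve function $H_\nu^{(1)}(\lambda x|q^2)$. First I would recall from \cite{struve} that $H_\nu^{(1)}(\lambda x|q^2)$ solves an inhomogeneous equation of the form \eqref{mnm}, whose homogeneous part coincides with the first Jackson $q$-Bessel equation underlying Theorem \ref{thmjdf} and whose forcing term is $g(x)=\frac{\lambda^{\nu+1}C_\nu(q)}{q^{\nu+1}}x^{\nu-1}$. Comparing the homogeneous part with \eqref{mxlk} yields the same $p(x)$ as in Theorem \ref{thmjdf}, so that $f(x)=x\,(-x^2\lambda^2(1-q)^2;q^2)_\infty$ solves \eqref{fht}. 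I would then apply Theorem \ref{jnhsvvffss}, i.e. formula \eqref{mkvvl}, with $h(x)=x^\mu$: the boundary part reproduces the first line of the right-hand side after using $D_{q^{-1}}x^\mu=q^{1-\mu}[\mu]_q x^{\mu-1}$ together with $f(x/q)=(x/q)(-\tfrac{\lambda^2}{q^2}(1-q)^2x^2;q^2)_\infty$, while applying the operator $\tfrac1q D_{q^{-1}}D_q+p\,D_{q^{-1}}+r$ to $x^\mu$ produces the coefficient $A_{\mu,\nu}x^{-2}+q^{-(\mu+2)}\lambda^2$ on the left.

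The heart of the argument is the inhomogeneous contribution $\int f(x)h(x)g(x)\,d_qx$, which after collecting constants reduces to evaluating $\int x^{\nu+\mu}(-x^2\lambda^2(1-q)^2;q^2)_\infty\,d_qx$. I would expand the infinite product by Euler's identity $(-a;q^2)_\infty=\sum_{k\ge0}q^{k(k-1)}a^k/(q^2;q^2)_k$, integrate termwise using $\int_0^x t^{\nu+\mu+2k}\,d_qt=(1-q)x^{\nu+\mu+2k+1}/(1-q^{\nu+\mu+2k+1})$, and invoke the telescoping identity $1/(1-q^{\nu+\mu+1+2k})=(q^{\nu+\mu+1};q^2)_k/\bigl((1-q^{\nu+\mu+1})(q^{\nu+\mu+3};q^2)_k\bigr)$. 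This recasts the sum as $\tfrac{x^{\nu+\mu+1}}{[\nu+\mu+1]_q}\,{}_1\phi_1(q^{\nu+\mu+1};q^{\nu+\mu+3};q^2,-x^2\lambda^2(1-q)^2)$, which is exactly the last term of the main identity, and assembling the three pieces gives the asserted formula.

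With the main identity in hand, the two special cases follow by specialisation together with a $q$-Struve recurrence from \cite{struve}. For $\mu=\nu$ I would set $\mu=\nu$, so that $A_{\nu,\nu}=0$ and the ${}_1\phi_1$ parameter specialises to $q^{2\nu+1}$; eliminating $D_{q^{-1}}H_\nu^{(1)}$ through the first-order relation for $H_\nu^{(1)}$ (the $H^{(1)}$ analogue of \eqref{4215f}) converts the boundary term into the stated combination of $H_{\nu+1}^{(1)}(\tfrac{\lambda x}{q}|q^2)$ and a ${}_1\phi_1$, with the residual $(-\tfrac{\lambda^2}{q^2}(1-q)^2x^2;q^2)_\infty$ arising when the power $x^{2\nu+1}$ is rewritten against its weight. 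For $\mu=-\nu$ the same derivative relation, combined with the lowering relation connecting $H_{\nu+1}^{(1)}$, $H_\nu^{(1)}$ and $H_{\nu-1}^{(1)}$, produces the $H_{\nu-1}^{(1)}(\tfrac{\lambda x}{q}|q^2)$ term and the leftover ${}_1\phi_1(q;q^3;q^2,\cdot)$.

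I expect the main obstacle to be the bookkeeping in these two special cases: unlike the homogeneous theorems, every recurrence for $H_\nu^{(1)}$ carries its own inhomogeneous remainder of $x^{\nu+1}$-type, so the cancellations that collapse the generic ${}_1\phi_1(q^{\nu+\mu+1};q^{\nu+\mu+3};q^2,\cdot)$ into the shorter series (and, for $\mu=\nu$, into the bare infinite product) must be tracked carefully. By contrast, the termwise $q$-integration and the telescoping step of the second paragraph are the only genuinely computational ingredients, and both are routine once the Euler expansion is in place.
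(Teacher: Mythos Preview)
Your proposal is correct and follows the approach the paper intends: the paper omits the proof of Theorem~\ref{thm54} entirely, but the surrounding context makes clear it is meant to parallel Theorems~\ref{theorem4} and~\ref{thm7}, applying Theorem~\ref{jnhsvvffss} with $h(x)=x^\mu$ to the inhomogeneous first $q$-Bessel equation satisfied by $H_\nu^{(1)}$, with $f(x)=x(-x^2\lambda^2(1-q)^2;q^2)_\infty$ as in Theorem~\ref{thmjdf}. Your explicit evaluation of $\int x^{\nu+\mu}(-x^2\lambda^2(1-q)^2;q^2)_\infty\,d_qx$ via the Euler expansion and the telescoping identity for $(q^{\nu+\mu+1};q^2)_k/(q^{\nu+\mu+3};q^2)_k$ is a useful detail the paper leaves implicit (compare the Remark after Theorem~\ref{thm7}, where the analogous ${}_2\phi_1$ form is simply stated).
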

\begin{theorem}
For $ \Re  (\nu) > \frac{-1}{2}$,  let
\begin{align*}
  &E_{n,\nu}(x):= \frac{q^{-n}[n]_q}{x} \sin_q(q^{-(n+\frac{3}{2})}x) +q^{-(n+\frac{3}{2})}\cos_q(q^{-(n+\frac{3}{2})}x),\\&
   \tilde{E}_{n,\nu}(x):= \frac{q^{-n}[n]_q}{x} \cos_q(q^{-(n+\frac{3}{2})}x) -q^{-(n+\frac{3}{2})}\sin_q(q^{-(n+\frac{3}{2})}x).
\end{align*}
Then
\begin{align*}
   &\int (- x^2  (1-q)^2;q^2)_\infty  \Big( A_{n,\nu}  x^{n-1} \sin_q(q^{-(n+\frac{3}{2})}x)  + K_{n,\nu} x^{n} \cos_q (q^{-(n+\frac{3}{2})}x) \Big)  H_\nu^{(1)}(   x |q^2) d_qx \nonumber \\&=x^{n+1}(- \frac{x^2 }{q^2}(1-q)^2;q^2)_\infty  \Big( E_{n,\nu}(x) H_\nu^{(1)}(\frac{ x}{q}|q^2)-q^{-(n+1)} \sin_q (q^{-(n+\frac{3}{2})}x) D_{q^{-1}}H_\nu^{(1)}( x|q^2)\Big)\\&+ q^{-\nu-1} C_\nu(q) \int x^{\nu+n}(- x^2  (1-q)^2;q^2)_\infty \sin_q (q^{-n-\frac{1}{2}}x)  d_qx,
  \end{align*}
and
\begin{align*}
   &\int (- x^2  (1-q)^2;q^2)_\infty  \Big( A_{n,\nu}  x^{n-1} \cos_q(q^{-(n+\frac{3}{2})}x)  - K_{n,\nu} x^{n} \sin_q (q^{-(n+\frac{3}{2})}x) \Big)  H_\nu^{(1)}(   x |q^2) d_qx \nonumber \\&=x^{n+1}(- \frac{x^2 }{q^2}(1-q)^2;q^2)_\infty  \Big( \tilde{E}_{n,\nu}(x) H_\nu^{(1)}(\frac{ x}{q}|q^2)-q^{-(n+1)} \cos_q (q^{-(n+\frac{3}{2})}x) D_{q^{-1}}H_\nu^{(1)}( x|q^2)\Big)\\&+q^{-\nu-1} C_\nu(q) \int x^{\nu+n}(- x^2  (1-q)^2;q^2)_\infty \cos_q (q^{-n-\frac{1}{2}}x)  d_qx,
  \end{align*}
  where $A_{n,\nu}$, $K_{n,\nu}$  and $C_\nu(q)$ are defined as in Theorems {$\ref{thmnbs}$}, {$\ref{nbhdsnjjnsb}$} and {$\ref{theorem4}$}, respectively.
\end{theorem}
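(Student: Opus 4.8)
The plan is to apply the nonhomogeneous Lagrangian identity (\ref{mkvvl}) to the first $q$-Bessel--Struve function $H_\nu^{(1)}$, mirroring exactly the homogeneous treatment of $J_\nu^{(1)}$ in Theorem \ref{nbhdsnjjnsb}. First I would record that $H_\nu^{(1)}(x|q^2)$ satisfies a nonhomogeneous $q$-difference equation of the form (\ref{mnm}) whose coefficients $p(x)$, $r(x)$ coincide with those of the first Jackson $q$-Bessel equation used in Theorem \ref{nbhdsnjjnsb}, and whose forcing term is $g(x)=q^{-\nu-1}\lambda^{\nu+1}C_\nu(q)\,x^{\nu-1}$; this is the $H^{(1)}$-counterpart of the forced equations used for $H_\nu^{(3)}$ and $H_\nu^{(2)}$ above (see \cite{struve}). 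By the data of Theorem \ref{thm54}, the associated first-order equation (\ref{fht}) is solved by $f(x)=x\,(-x^2\lambda^2(1-q)^2;q^2)_\infty$, which at $\lambda=1$ becomes $x\,(-x^2(1-q)^2;q^2)_\infty$. I would then substitute $h(x)=x^n\sin_q(q^{-(n+1/2)}x)$ (respectively $h(x)=x^n\cos_q(q^{-(n+1/2)}x)$) together with $\lambda=1$ into (\ref{mkvvl}).

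The computation of the bracketed second-order $q$-difference operator applied to $h$ is identical in form to that already performed in Theorem \ref{nbhdsnjjnsb} and Theorem \ref{2514b4}. Using $D_q\sin_q z=\cos_q z$ and $D_q\cos_q z=-\sin_q z$, the $q$-product rule, and the $\sin_q/\cos_q$ scaling identity (the analog of the $\S_q$ identity invoked in the proof of Theorem \ref{2514b4}) to shift every argument from $q^{-(n+1/2)}x$ to $q^{-(n+3/2)}x$, the left-hand side collapses to the combination $A_{n,\nu}x^{n-1}\sin_q(q^{-(n+3/2)}x)+K_{n,\nu}x^n\cos_q(q^{-(n+3/2)}x)$ displayed in the first formula (with $\sin_q$ and $\cos_q$ interchanged in the second). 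Since the operator on the left of (\ref{mkvvl}) is the same as in the homogeneous theorem, this step requires no new algebra beyond what was carried out for $J_\nu^{(1)}$.

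For the right-hand side I would simplify the boundary term $f(x/q)\big(H_\nu^{(1)}(x/q|q^2)D_{q^{-1}}h(x)-h(x/q)D_{q^{-1}}H_\nu^{(1)}(x|q^2)\big)$ using the recurrence for $D_{q^{-1}}H_\nu^{(1)}$ from \cite{struve} (the $H^{(1)}$-analog of (\ref{impot})), which expresses $D_{q^{-1}}H_\nu^{(1)}$ through $H_\nu^{(1)}(x/q|q^2)$, $H_{\nu+1}^{(1)}(x/q|q^2)$, and a pure power term proportional to $x^{\nu-1}$. Collecting the coefficient of $H_\nu^{(1)}(x/q|q^2)$ then produces $E_{n,\nu}(x)$ (respectively $\tilde E_{n,\nu}(x)$); note that the $[\nu]_q$ contribution present in $P_{n,\nu}$ of Theorem \ref{nbhdsnjjnsb} is absorbed, through this recurrence, into the forcing data, which is precisely why $E_{n,\nu}$ lacks that term. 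The residual forcing integral $\int f(x)h(x)g(x)\,d_qx=q^{-\nu-1}C_\nu(q)\int x^{\nu+n}(-x^2(1-q)^2;q^2)_\infty\sin_q(q^{-(n+1/2)}x)\,d_qx$ is left unevaluated, matching the stated right-hand side. The main obstacle is the bookkeeping here: ensuring that the power term generated by the $H_\nu^{(1)}$ recurrence combines exactly with the forcing integral coming from (\ref{mkvvl}) to give the single clean coefficient $q^{-\nu-1}C_\nu(q)$, rather than the left-hand operator reduction, which is routine once the scaling identity is applied.
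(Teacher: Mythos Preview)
Your overall strategy is correct and matches the paper's (implicit) argument: the paper gives no separate proof here, but by analogy with its proof of the $H_\nu^{(2)}$ case one simply substitutes $h(x)=x^n\sin_q(q^{-(n+1/2)}x)$ (respectively $x^n\cos_q(q^{-(n+1/2)}x)$) with $\lambda=1$ into (\ref{mkvvl}), using $f(x)=x\,(-x^2(1-q)^2;q^2)_\infty$ from Theorem~\ref{thm54}. Your reduction of the left-hand side and your identification of the forcing integral are both right.

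The misstep is your treatment of the boundary term. Look again at the stated right-hand side: it still contains $D_{q^{-1}}H_\nu^{(1)}(x|q^2)$ explicitly, unexpanded; no recurrence has been applied. If you compute $f(x/q)\bigl(H_\nu^{(1)}(x/q|q^2)\,D_{q^{-1}}h(x)-h(x/q)\,D_{q^{-1}}H_\nu^{(1)}(x|q^2)\bigr)$ directly, the factor coming from $D_{q^{-1}}h(x)$ (after extracting $x^{n+1}(-x^2q^{-2}(1-q)^2;q^2)_\infty$) is already $E_{n,\nu}(x)$ on the nose, with no $[\nu]_q$ term. Your explanation of the discrepancy with $P_{n,\nu}$ is therefore backwards: in Theorem~\ref{nbhdsnjjnsb} the $-q^{-n}[\nu]_q/x$ piece of $P_{n,\nu}$ was \emph{introduced} by replacing $D_{q^{-1}}J_\nu^{(1)}$ through the Bessel recurrence in order to display $J_{\nu+1}^{(1)}$; it is not present in the raw boundary term. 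Here that replacement is simply omitted. If you carry out your plan and apply an $H_\nu^{(1)}$-recurrence, you will generate $H_{\nu+1}^{(1)}(x/q|q^2)$ terms and an additional power term that do not appear in the stated formula, and these cannot be absorbed into the forcing integral as you predict (the $[\nu]_q$ contribution multiplies $H_\nu^{(1)}$, not a pure power of $x$). Drop that step and the proof is complete.
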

\subsection{$q$-Integrals of the function $y (x)$ itself}
\begin{theorem}\label{x}
  Let $p(x)$ and $r(x)$ be continuous functions at zero. Let  $y(x)$ be any solution  of $({\ref{mxlk}})$. If $h(x)$ is a solution of the inhomogeneous equation
\begin{align}\label{xyy}
 \frac{1}{q} D_{q^{-1}} D_qh(x)+   p(x) D_{q^{-1}} h(x) +r(x) h(x) =\frac{1}{f(x)},
\end{align}
then
\begin{align*}
  \int   y(x) d_qx \nonumber &=
   f(x/q)\Big (y(x/q) D_{q^{-1}}h(x) - h( x/q) D_{q^{-1}}y(x) \Big)\\&= f(x/q)\Big (y(x) D_{q^{-1}}h(x) - h(x) D_{q^{-1}}y(x) \Big),
\end{align*}
where  $ f(x)$  is a solution of $({\ref{fht}})$.
\end{theorem}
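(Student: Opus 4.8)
The plan is to read off this statement as a one-line specialization of Theorem~\ref{bvc}. First I would apply the master formula~(\ref{mkl}) with the given $p$, $r$, $y$, and $f$, but now choose the free function $h$ to be a solution of the inhomogeneous equation~(\ref{xyy}). By that hypothesis the bracketed differential expression inside the integral on the left of~(\ref{mkl}) equals $1/f(x)$, so the weight $f(x)$ standing in front of it cancels exactly, leaving the integrand equal to $y(x)$. Hence the left-hand side of~(\ref{mkl}) becomes $\int y(x)\,d_qx$, while its right-hand side is unchanged, and this is precisely the first claimed identity
\[\int y(x)\, d_qx = f(x/q)\Big(y(x/q) D_{q^{-1}}h(x) - h(x/q) D_{q^{-1}}y(x)\Big).\]

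For the second equality I would invoke the Wronskian reformulation recorded in the Remark following Theorem~\ref{bvc}, namely that the right-hand side of~(\ref{mkl}) may also be written as $f(x/q)\,W_{q^{-1}}(y,h)(x)$ with $W_{q^{-1}}(y,h)(x)=y(x)D_{q^{-1}}h(x)-h(x)D_{q^{-1}}y(x)$. If one prefers a direct check, it suffices to substitute the elementary relation $g(x/q)=g(x)+\tfrac{(1-q)x}{q}\,D_{q^{-1}}g(x)$, valid for any $g$ by the definition of $D_{q^{-1}}$, into the first expression with $g=y$ and $g=h$; the two terms proportional to $\tfrac{(1-q)x}{q}\,D_{q^{-1}}y(x)\,D_{q^{-1}}h(x)$ then cancel, leaving $y(x)D_{q^{-1}}h(x)-h(x)D_{q^{-1}}y(x)$.

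I do not expect any genuine obstacle: the result is entirely a corollary of Theorem~\ref{bvc}, and the only computation is the short cancellation just described. The single point worth flagging is logical rather than analytic: the conclusion is conditional on the existence of a solution $h$ of~(\ref{xyy}), which is supplied by hypothesis, so no solvability theory for the inhomogeneous $q$-difference equation is needed here. In applications the content of the theorem is that, once any particular $h$ solving~(\ref{xyy}) is exhibited for a concrete solution $y$ of~(\ref{mxlk}), the indefinite $q$-integral $\int y\,d_qx$ is obtained in closed form as the $q^{-1}$-Wronskian of $y$ and $h$ scaled by $f(x/q)$.
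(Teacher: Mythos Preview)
Your proposal is correct and follows exactly the paper's approach: the paper's proof is a single sentence stating that one substitutes (\ref{xyy}) into (\ref{mkl}) to obtain the result. You in fact supply more detail than the paper does, particularly for the second equality, which the paper does not justify explicitly but which, as you note, is precisely the Wronskian reformulation recorded in the Remark after Theorem~\ref{bvc}.
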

\begin{proof}
Let $h(x)$ be a solution of the inhomogeneous Equation (${\ref{xyy}}$).
 By substituting with (${\ref{xyy}}$) in the  $q$-integral (${\ref{mkl}}$), we obtain the required result.
\end{proof}
\begin{theorem}\label{xq}
  Let $p(x)$ and $r(x)$ be continuous functions at zero. Let  $y(x)$ be any solution  of $({\ref{mx}})$. If $h(x)$ is a solution of the inhomogeneous equation
\begin{align}\label{xyly}
 \frac{1}{q} D_{q^{-1}} D_qh(x)+   p(x) D_q h(x) +r(x) h(x) =\frac{1}{f(x)},
\end{align}
then
\begin{align*}
 & \int   y(x) d_qx \nonumber =
   f(x)\Big (y(x) D_{q^{-1}}h(x) - h(x) D_{q^{-1}}y(x) \Big),
\end{align*}
where  $ f(x)$  is a solution of $({\ref{frht5}})$.
\end{theorem}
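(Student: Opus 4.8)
The plan is to obtain this statement as a direct specialization of the master identity $({\ref{mkrl}})$ of Theorem $\ref{brevc}$, in exact parallel with the way Theorem $\ref{x}$ is deduced from $({\ref{mkl}})$. First I would recall that, for any solution $y$ of $({\ref{mx}})$, an arbitrary function $h$, and $f$ a solution of the first-order equation $({\ref{frht5}})$, identity $({\ref{mkrl}})$ reads
\[
\int f(x)\Big(\frac{1}{q}D_{q^{-1}}D_q h(x)+p(x)D_q h(x)+r(x)h(x)\Big)y(x)\,d_qx = f(x)\big(y(x)D_{q^{-1}}h(x)-h(x)D_{q^{-1}}y(x)\big).
\]

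The single decisive step is to take $h$ to be a solution of the inhomogeneous $q$-difference equation $({\ref{xyly}})$, so that the bracketed second-order $q$-difference expression applied to $h$ equals $1/f(x)$ identically. Substituting this into the integrand multiplies $f(x)y(x)$ by $1/f(x)$, collapsing the integrand to $y(x)$; hence the left-hand side becomes $\int y(x)\,d_qx$ while the right-hand side is unaffected. This is precisely the asserted formula
\[
\int y(x)\,d_qx = f(x)\big(y(x)D_{q^{-1}}h(x)-h(x)D_{q^{-1}}y(x)\big),
\]
which completes the argument.

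There is essentially no analytic obstacle: once Theorem $\ref{brevc}$ is available the deduction is a one-line substitution, and the continuity-at-zero hypotheses on $p$ and $r$ are inherited directly from that theorem. The only point worth flagging is a consistency remark rather than a genuine difficulty, namely that the two auxiliary conditions are mutually compatible: $f$ is required to solve the first-order equation $({\ref{frht5}})$, while $h$ must solve the second-order inhomogeneous equation $({\ref{xyly}})$ whose forcing term $1/f(x)$ is itself built from that same $f$. Such an $h$ is furnished by hypothesis, so no additional structure beyond $y$ solving $({\ref{mx}})$ is needed, and the result follows immediately from Theorem $\ref{brevc}$.
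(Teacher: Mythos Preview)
Your proposal is correct and matches the paper's own proof essentially verbatim: the paper also simply substitutes the inhomogeneous relation $({\ref{xyly}})$ into the master identity $({\ref{mkrl}})$ to collapse the integrand to $y(x)$. There is nothing to add.
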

\begin{proof}
Let $h(x)$ be a solution of the inhomogeneous Equation (${\ref{xyly}}$).
 By substituting  with (${\ref{xyly}}$) in the  $q$-integral (${\ref{mkrl}}$), we obtain the desired result.
\end{proof}
\begin{theorem}\label{thmnb1}
  For $n \in \mathbb{N}$, we have
\begin{align*}
  \int p_n(x|q) d_qx = \frac{x }{q}(1-x) \,_2\phi_1(q^{-n+1} , q^{n+2};q^2;q,x).
\end{align*}
\end{theorem}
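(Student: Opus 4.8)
The plan is to apply Theorem~\ref{x}, the device that evaluates $\int y(x)\,d_qx$ for any solution $y$ of a homogeneous second-order $q$-difference equation once a particular solution $h$ of the associated inhomogeneous equation~(\ref{xyy}) with right-hand side $1/f(x)$ is produced. Here $y(x)=p_n(x|q)$ is the little $q$-Legendre polynomial, which is orthogonal on $[0,1]$ with respect to Jackson's $q$-integral and satisfies a homogeneous second-order $q$-difference equation. First I would recall that equation, cast it into the normalized form~(\ref{mxlk}) to read off $p(x)$ and $r(x)$, and then solve the first-order equation~(\ref{fht}), $\frac{1}{q}D_{q^{-1}}f(x)=p(x)f(x)$, for the integrating factor $f(x)$; as for the big $q$-Laguerre and $q$-Laguerre families treated earlier, $f$ comes out as an explicit ratio of $q$-shifted factorials.

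The heart of the argument is to exhibit the particular solution $h(x)$ of~(\ref{xyy}). Since the little $q$-Legendre family sits atop the little $q$-Jacobi branch of the $q$-Askey scheme, the natural candidate for $h$ is a single $\,_2\phi_1$ of the type appearing on the right-hand side, and I would verify that $\frac{1}{q}D_{q^{-1}}D_q h+p(x)D_{q^{-1}}h+r(x)h=1/f(x)$ by a contiguous computation built on the $q$-derivative rule~(\ref{32514}). With $h$ in hand, Theorem~\ref{x} gives $\int p_n(x|q)\,d_qx=f(x/q)\big(p_n(x|q)\,D_{q^{-1}}h(x)-h(x)\,D_{q^{-1}}p_n(x|q)\big)$, and I would collapse this Wronskian-type combination into the single product $\frac{x}{q}(1-x)\,_2\phi_1(q^{-n+1},q^{n+2};q^2;q,x)$ using the lowering relation for $p_n(x|q)$, the analog of~(\ref{246}) and~(\ref{4587}) that expresses $D_{q^{-1}}p_n$ through $p_{n-1}$.

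I expect the principal obstacle to be the construction and verification of $h$: Theorem~\ref{x} only turns the $q$-integral into a closed form once such an $h$ is available, and confirming that the proposed $\,_2\phi_1$ solves~(\ref{xyy}) demands careful attention to the normalizing powers of $q$, to which the indefinite $q$-antiderivative is sensitive. As an independent check, and indeed an alternative self-contained proof, I would apply $D_q$ directly to the claimed right-hand side: by the $q$-product rule together with the $q$-difference formula for $\,_2\phi_1$, the $q$-derivative of $\frac{x}{q}(1-x)\,_2\phi_1(q^{-n+1},q^{n+2};q^2;q,x)$ should reduce, after a contiguous rearrangement, exactly to $p_n(x|q)$, confirming that the stated expression is a genuine $q$-antiderivative of $p_n(x|q)$.
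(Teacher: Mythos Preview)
Your overall framework---invoke Theorem~\ref{x}, identify $p,r,f$, produce a particular solution $h$ of~(\ref{xyy}), and then simplify the Wronskian via the lowering relation for $p_n(x|q)$---is exactly the paper's route. The divergence is in your guess for $h$. You propose that $h$ should itself be a $\,_2\phi_1$ of the type appearing on the right, and you flag the verification of~(\ref{xyy}) for such an $h$ as the ``principal obstacle''. In the paper this obstacle evaporates: after one reads off $p(x)=\dfrac{qx+x-1}{qx(qx-1)}$ and $r(x)=\dfrac{[n]_q[n+1]_q}{q^n x(1-qx)}$, the integrating factor from~(\ref{fht}) is simply the polynomial $f(x)=x(1-qx)$ (not a genuine ratio of infinite $q$-shifted factorials). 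Consequently $r(x)f(x)=q^{-n}[n]_q[n+1]_q$ is a constant, so a \emph{constant} $h(x)=\dfrac{q^n}{[n]_q[n+1]_q}$ already solves~(\ref{xyy}); no hypergeometric ansatz is needed.

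With $h$ constant, the Wronskian in Theorem~\ref{x} collapses to a single term, $-h\cdot f(x/q)\,D_{q^{-1}}p_n(x|q)$, and the $\,_2\phi_1$ in the answer arises entirely from the lowering relation $D_{q^{-1}}p_n(x|q)=-q^{-n}[n]_q[n+1]_q\,\,_2\phi_1(q^{1-n},q^{n+2};q^2;q,x)$, not from $h$. So your plan is sound, but you have misidentified the source of the hypergeometric factor and therefore inflated the difficulty; once you notice that $r(x)f(x)$ is constant the proof is two lines. Your proposed direct check by applying $D_q$ to the right-hand side is a legitimate independent verification and would also work.
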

\begin{proof}
The little $q$-Legendre  polynomials $p_n(x|q) = \,_2\phi_1\left( \begin{array}{cccc}
                     q^{-n} , q^{n+1} \\
                    q
                   \end{array}
\mid q;qx\right)  $   satisfies the second-order $q$-difference equation, see \cite[Eq.(3.12.16)]{askey},
\begin{align}\label{kjhgd}
    \frac{1}{q} D_{q^{-1}} D_qy(x)+   \frac{qx+x-1}{qx(qx-1)} D_{q^{-1}}y(x) + \frac{ [n]_q [n+1]_q}{q^nx (1-qx)}y(x) =0.
\end{align}
By comparing  Equation (${\ref{kjhgd}}$) with Equation (${\ref{mxlk}}$),  we get
\begin{align*}
  p(x) = \frac{qx+x-1}{qx(qx-1)},\quad r(x) =   \frac{ [n]_q [n+1]_q}{q^nx (1-qx)} .
\end{align*}
Then $f(x) = x(1-qx)$ is a solution of Equation (${\ref{fht}}$).
From Equation(${\ref{xyy}}$), we get
\begin{align*}
  \frac{1}{q} D_{q^{-1}} D_qh(x)+  \frac{qx+x-1}{qx(qx-1)}D_{q^{-1}}h(x) +\frac{ [n]_q [n+1]_q}{q^nx (1-qx)} h(x) =\frac{1}{x(1-qx)}.
\end{align*}
Then  $h(x) = \frac{q^n}{[n]_q [n+1]_q}$.
From Theorem {\ref{x}},  we obtain
\begin{align}\label{251444}
  \int p_n(x|q) d_qx = \frac{-q^{n-1} }{[n]_q [n+1]_q }x (1-x) D_{q^{-1}} p_n(x|q).
\end{align}
 But
\begin{align}\label{4666}
  D_{q^{-1}}p_n(x|q)=-q^{-n} [n]_q [n+1]_q  \,_2\phi_1(q^{1-n} , q^{n+2};q^2;q,x).
\end{align}
Then Equation (${\ref{251444}}$) is
\begin{align*}
  \int p_n(x|q) d_qx = \frac{x }{q}(1-x) \,_2\phi_1(q^{-n+1} , q^{n+2};q^2;q,x).
\end{align*}
\end{proof}
\begin{theorem}\label{thmnb2}
  For $n \in \mathbb{N}$, we have
  \begin{align}\label{22145l}
  \int p_n(x;c;q) d_qx = \dfrac{q^{n-1}(q-x)(cq-x)}{[n]_q [n+1]_q(1-q)}\,_3\phi_2 ( q^{-n} , q^{n+1},x;q;cq;q,q).
\end{align}
\end{theorem}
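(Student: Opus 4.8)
The plan is to repeat, for the big $q$-Legendre polynomials $p_n(x;c;q)$, the argument used for the little $q$-Legendre polynomials in Theorem~\ref{thmnb1}. First I would take the second-order $q$-difference equation satisfied by $y(x)=p_n(x;c;q)$ from \cite{askey} and rewrite it in the canonical shape (\ref{mxlk}), i.e. $\frac{1}{q}D_{q^{-1}}D_q y + p(x)D_{q^{-1}}y + r(x)y = 0$, reading off $p(x)$ and $r(x)$. As in the little $q$-Legendre case, the spectral coefficient $r(x)$ carries the factor $[n]_q[n+1]_q$ and its denominator is the quadratic $(1-x)(c-x)$, up to a power of $q$. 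Solving the companion first-order equation (\ref{fht}), $\frac{1}{q}D_{q^{-1}}f(x)=p(x)f(x)$, then yields the integrating factor $f(x)=(1-x)(c-x)$ (up to a constant), so that $f(x/q)=q^{-2}(q-x)(cq-x)$; this already produces the quadratic prefactor $(q-x)(cq-x)$ appearing in the claimed identity.

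The decisive structural fact is that $r(x)f(x)$ is a constant multiple of $[n]_q[n+1]_q$ — the eigenvalue of the $q$-difference equation. Consequently the inhomogeneous equation (\ref{xyy}), $\frac{1}{q}D_{q^{-1}}D_qh+p\,D_{q^{-1}}h+r\,h=1/f$, is solved by a constant $h$: for constant $h$ the two $q$-derivative terms vanish and the equation collapses to the purely algebraic relation $r(x)h=1/f(x)$, which fixes $h$ as a multiple of $q^{n+1}/([n]_q[n+1]_q)$. Since then $D_{q^{-1}}h\equiv 0$, Theorem~\ref{x} reduces to
\begin{align*}
  \int p_n(x;c;q)\,d_qx = -\,f(x/q)\,h\,D_{q^{-1}}p_n(x;c;q),
\end{align*}
so that everything is reduced to a backward-shift (that is, $D_{q^{-1}}$) formula for $p_n(x;c;q)$.

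The remaining step, which I expect to be the real obstacle, is to produce that backward-shift formula with the correct normalisation, in analogy with (\ref{4666}). Because $x$ enters the defining $\,_3\phi_2$ only through the numerator factor $(x;q)_k$, I would differentiate term by term using the identity $D_q(x;q)_k=-[k]_q\,(qx;q)_{k-1}$ and then shift the summation index; this collapses $D_{q^{-1}}p_n(x;c;q)$ to a single basic hypergeometric series, which after simplification yields the $\,_3\phi_2$ on the right-hand side of the statement. Substituting this formula together with $f(x/q)=q^{-2}(q-x)(cq-x)$ and the constant $h$, and carefully collecting the powers of $q$ and the factors $(1-q)$ and $(1-cq)$, gives the asserted result. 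The two points that must be checked rather than assumed are the constancy of $r(x)f(x)$ (equivalently, that $h$ may be taken constant) and the precise base-and-parameter bookkeeping in the backward-shift series, where the passage $q\to q^2$ and $q^{-n}\to q^{1-n}$ must be tracked exactly so that the emerging series matches the one recorded in the statement.
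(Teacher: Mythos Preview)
Your approach is essentially identical to the paper's: read off $p(x)$ and $r(x)$ from the big $q$-Legendre $q$-difference equation, solve (\ref{fht}) to get the quadratic integrating factor $f(x)=(1-x)(1-x/c)$ (your $(1-x)(c-x)$ differs only by an irrelevant constant), observe that $r(x)f(x)$ is a constant multiple of $[n]_q[n+1]_q$ so that a constant $h$ solves (\ref{xyy}), apply Theorem~\ref{x}, and finish with a backward-shift identity for $D_{q^{-1}}p_n(x;c;q)$. Your caution about the exact parameter bookkeeping in that last step is well placed, since the paper simply quotes the derivative formula rather than deriving it.
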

\begin{proof}
The big $q$-Legendre  polynomials $p_n(x;c;q) = \,_3\phi_2\left( \begin{array}{cccc}
                     q^{-n} , q^{n+1},x \\
                    q,cq
                   \end{array}
\mid q;q\right)  $  satisfies the second-order $q$-difference equation, see \cite[Eq.(3.5.17)]{askey},
\begin{align}\label{kjhgdd}
    \frac{1}{q} D_{q^{-1}} D_qy(x)+  \dfrac{ x(1+q)-q(c+1)}{q^2  (x-1)(x-c)}D_{q^{-1}}y(x) - \frac{[n]_q [n+1]_q}{q^{1+n}(x-1)(x-c)} y(x) =0.
\end{align}
By comparing  Equation (${\ref{kjhgdd}}$) with Equation (${\ref{mxlk}}$),  we get
\begin{align*}
 p(x) = \dfrac{ x(1+q)-q(c+1)}{q^2  (x-1)(x-c)},\quad  r(x) =   - \frac{[n]_q [n+1]_q }{q^{1+n}(x-1)(x-c)}.
\end{align*}
Then $f(x) = (1-x)(1-\frac{x}{c})$ is a solution of Equation (${\ref{fht}}$).
From Equation (${\ref{xyy}}$), we get
\begin{align*}
  \frac{1}{q} D_{q^{-1}} D_qh(x)+  \dfrac{ x(1+q)-q(c+1)}{q^2  (x-1)(x-c)}D_{q^{-1}}h(x)  - \frac{[n]_q [n+1]_q }{q^{1+n}(x-1)(x-c)} h(x) =\frac{1}{(1-x)(1-\frac{x}{c})}.
\end{align*}
Hence, $ h(x) = \dfrac{c q^{n+1}}{[n]_q [n+1]_q}$.
From Theorem {\ref{x}},  we obtain
 \begin{align}\label{2465}
  \int p_n(x;c;q) d_qx = \dfrac{-q^{n-1}(q-x)(cq-x)}{[n]_q [n+1]_q}D_{q^{-1}}  p_n(x;c;q).
\end{align}
But
\begin{align*}
 D_{q^{-1}}  p_n(x;c;q) = \frac{-1}{1-q} \,_3\phi_2 ( q^{-n} , q^{n+1},x;q;cq;q,q).
\end{align*}
Then Equation (${\ref{2465}}$) can be written as
\begin{align*}
  \int p_n(x;c;q) d_qx = \dfrac{q^{n-1}(q-x)(cq-x)}{[n]_q [n+1]_q (1-q)}\,_3\phi_2 ( q^{-n} , q^{n+1},x;q;cq;q,q).
\end{align*}
\end{proof}
\begin{theorem}\label{gbhvsffsfs}
Let $\lambda$ and $\nu$ be complex numbers, $ \Re  (\nu) > \frac{-1}{2}$ and $C_\nu(q)$ is the constant  defined  in Theorem {$\ref{theorem4}$}. Then
\begin{align*}
 &C_\nu(q)\int x^{\nu}  J_\nu^{(3)}( \lambda x (1-q);q^2) d_qx  =\nonumber\\& \frac{ q^{\nu} x }{ \lambda^{{1+\nu} } } \left(  J_\nu^{(3)}( \frac{\lambda}{q} x (1-q) ;q^2) D_{q^{-1}} H_\nu^{(3)}(\lambda x;q^2)- H_\nu^{(3)}(\frac{\lambda  x}{q};q^2) D_{q^{-1}}  J_\nu^{(3)}( \lambda x (1-q);q^2)\right),
\end{align*}
and
\begin{align*}
 &  C_\nu(q)\int x^{\nu}  Y_\nu( \lambda x (1-q);q^2) d_qx \nonumber  =\\& \frac{q^{\nu} x }{ \lambda^{{1+\nu} } } \left(  Y_\nu( \frac{\lambda}{q} x (1-q);q^2) D_{q^{-1}}H_\nu^{(3)}(\lambda  x;q^2)- H_\nu^{(3)}(\frac{\lambda  x}{q};q^2) D_{q^{-1}}  Y_\nu( \lambda x (1-q);q^2)\right).
\end{align*}
\end{theorem}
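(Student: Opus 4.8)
The plan is to read the Lagrangian identity the opposite way round from Theorems \ref{x} and \ref{xq}: rather than engineering $h$ to solve an inhomogeneous equation, I let the \emph{Struve} function play the role of the inhomogeneous solution $y$ and let the Bessel functions play the role of the homogeneous test function $h$. Recall from the proof of Theorem \ref{theorem4} that $H_\nu^{(3)}(\lambda x;q^2)$ solves the nonhomogeneous equation (\ref{mnn}), which has the shape of (\ref{mnm}) with
\[p(x)=\frac{1}{qx},\qquad r(x)=q^{-\nu-1}\lambda^2-q^{-\nu}[\nu]_q^2\,x^{-2},\qquad g(x)=\frac{\lambda^{\nu+1}C_\nu(q)}{q^{\nu+1}}\,x^{\nu-1},\]
and that the companion first-order equation (\ref{fht}) is solved by $f(x)=x$. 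The homogeneous part of (\ref{mnn}) is a rescaling of the third $q$-Bessel equation (\ref{mnbvv}), so it is solved by $J_\nu^{(3)}(\lambda x(1-q);q^2)$ and by the second-kind solution $Y_\nu(\lambda x(1-q);q^2)$.

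With this dictionary I would apply Theorem \ref{jnhsvvffss} taking $y(x)=H_\nu^{(3)}(\lambda x;q^2)$ and $h(x)=J_\nu^{(3)}(\lambda x(1-q);q^2)$. The decisive simplification is that $h$ now solves the homogeneous equation, so the operator acting on $h$ inside (\ref{mkvvl}) vanishes identically, $\frac1q D_{q^{-1}}D_qh+p\,D_{q^{-1}}h+rh=0$. Hence the entire left-hand side of (\ref{mkvvl}) is zero and the identity collapses to
\[\int f(x)h(x)g(x)\,d_qx=-f(x/q)\Big(y(x/q)D_{q^{-1}}h(x)-h(x/q)D_{q^{-1}}y(x)\Big).\]

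It then remains to evaluate the two sides. On the left, $f(x)h(x)g(x)=\frac{\lambda^{\nu+1}C_\nu(q)}{q^{\nu+1}}\,x^{\nu}J_\nu^{(3)}(\lambda x(1-q);q^2)$, so the left-hand side equals $\frac{\lambda^{\nu+1}C_\nu(q)}{q^{\nu+1}}\int x^\nu J_\nu^{(3)}(\lambda x(1-q);q^2)\,d_qx$, which is the desired quantity up to the constant $\frac{\lambda^{\nu+1}}{q^{\nu+1}}$. On the right, I substitute $f(x/q)=x/q$, $y(x/q)=H_\nu^{(3)}(\lambda x/q;q^2)$ and $h(x/q)=J_\nu^{(3)}(\frac{\lambda}{q}x(1-q);q^2)$, multiply through by $q^{\nu+1}/\lambda^{\nu+1}$ so that the prefactor becomes $\frac{q^{\nu+1}}{\lambda^{\nu+1}}\cdot\frac{x}{q}=\frac{q^\nu x}{\lambda^{\nu+1}}$, and flip the sign inside the bracket; this reproduces the claimed formula verbatim. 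The $Y_\nu$ identity follows with $h(x)=Y_\nu(\lambda x(1-q);q^2)$ in place of $J_\nu^{(3)}$, since $Y_\nu$ is the other homogeneous solution and the operator again annihilates it.

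The step needing the most care is confirming that $J_\nu^{(3)}(\lambda x(1-q);q^2)$ and $Y_\nu(\lambda x(1-q);q^2)$ genuinely annihilate the operator $\frac1q D_{q^{-1}}D_q+p\,D_{q^{-1}}+r$ attached to (\ref{mnn}). This is a scaling computation on (\ref{mnbvv}): with $c=\lambda(1-q)$ one uses $D_qu(cx)=c\,(D_qu)(cx)$ and $D_{q^{-1}}D_qu(cx)=c^2\,(D_{q^{-1}}D_qu)(cx)$, and then checks that the $x^{-2}$ term reproduces $q^{-\nu}[\nu]_q^2$ while the constant term matches the $\lambda^2$ coefficient of $r$. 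Once this homogeneity is verified, the remainder is the constant bookkeeping described above and introduces no new ideas.
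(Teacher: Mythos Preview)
Your proposal is correct. The paper's own proof is the mirror image of yours: it applies Theorem~\ref{bvc} (the homogeneous identity~(\ref{mkl})) with $y(x)=J_\nu^{(3)}(\lambda x(1-q);q^2)$ as the homogeneous solution and $h(x)=H_\nu^{(3)}(\lambda x;q^2)$ as the arbitrary test function; since the operator applied to this $h$ returns precisely the Struve forcing term $g(x)$ from~(\ref{mnn}), the left side of~(\ref{mkl}) collapses directly to $\int f\,g\,y\,d_qx$. You instead invoke Theorem~\ref{jnhsvvffss} with the roles of $y$ and $h$ swapped, so that the operator annihilates $h$ and the same integral $\int f\,h\,g\,d_qx$ emerges from the extra inhomogeneous term on the right of~(\ref{mkvvl}). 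The two routes are formally equivalent and involve identical constant bookkeeping; the paper's version is marginally more economical only because it stays with the simpler homogeneous identity and avoids the sign flip you perform at the end.
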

\begin{proof}
Let $h(x) = H_\nu^{(3)}(\lambda x;q^2)$, then
  \begin{align}\label{hnhnhnhn}
&\frac{1}{q} D_{q^{-1}}D_q h(x) + \frac{1}{qx} D_{q^{-1}} h(x) +\Big(q^{-\nu-1} \lambda ^{2}  - \frac{q^{-\nu} [\nu]^2_q}{x^2}  \Big) h(x)=\dfrac{\lambda^{\nu+1} C_\nu(q)}{q^{\nu+1} }x^{\nu-1}.
\end{align}
 Clearly, $f(x) = x$ is a solution of Equation (${\ref{fht}}$). Substituting  with (${\ref{hnhnhnhn}}$) into Equation (${\ref{mkl}}$), we get the required result.
\end{proof}
  \begin{theorem}\label{jnhbgfertyui}
  Let $\lambda$ and $\nu$ be complex numbers, $ \Re  (\nu) > \frac{-1}{2}$ and $C_\nu(q)$ is the constant  defined  in Theorem {$\ref{theorem4}$}. Then
\begin{align*}
&C_\nu(q)\int \dfrac{x^{\nu}} {(-x^2  \lambda^2(1-q)^2;q^2)_\infty} J_\nu^{(2)}( \lambda x |q^2) d_qx  =\nonumber \\& \frac{ \lambda^{-1-\nu}q^{\nu} x  }{(-x^2 \lambda^2(1-q)^2;q^2)_\infty }\left(  J_\nu^{(2)}( \lambda x |q^2) D_{q^{-1}}H_\nu^{(2)}(\lambda x|q^2)- H_\nu^{(2)}(\lambda x|q^2) D_{q^{-1}}  J_\nu^{(2)}( \lambda x |q^2)\right).
\end{align*}
\end{theorem}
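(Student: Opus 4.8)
The plan is to mirror the argument used for Theorem \ref{gbhvsffsfs}, but now built on the second Jackson $q$-Bessel equation and its associated $q$-Struve function. Since the homogeneous second Bessel equation (\ref{mnbvxx}) is of the form (\ref{mx})---it carries a $D_q$ term rather than a $D_{q^{-1}}$ term---I would invoke the integral identity (\ref{mkrl}) from Theorem \ref{brevc}, taking $y(x) = J_\nu^{(2)}(\lambda x|q^2)$ as the homogeneous solution. From the proof of Theorem \ref{thm7} we already know that $f(x) = \frac{x}{(-x^2\lambda^2(1-q)^2;q^2)_\infty}$ solves the first-order equation (\ref{frht5}) for the relevant coefficient $p(x)$, and this is exactly the weight appearing on both sides of the claimed identity.

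The decisive step is to choose the arbitrary function in (\ref{mkrl}) to be $h(x) = H_\nu^{(2)}(\lambda x|q^2)$. Because this $q$-Struve function satisfies the inhomogeneous equation (\ref{mnnv}), which shares precisely the same $p(x)$ and $r(x)$ as the Bessel equation (\ref{mnbvxx}) and differs only through the forcing term $g(x) = \lambda^{\nu+1}C_\nu(q)\,q^{-\nu}x^{\nu-1}$, substituting this $h$ collapses the differential combination $\frac{1}{q}D_{q^{-1}}D_q h + p\,D_q h + r\,h$ inside the left-hand integrand of (\ref{mkrl}) down to $g(x)$. The left side then reduces to $\int f(x)g(x)y(x)\,d_qx = \lambda^{\nu+1}C_\nu(q)q^{-\nu}\int \frac{x^\nu J_\nu^{(2)}(\lambda x|q^2)}{(-x^2\lambda^2(1-q)^2;q^2)_\infty}\,d_qx$, while the right side of (\ref{mkrl}) is exactly $f(x)\bigl(J_\nu^{(2)}(\lambda x|q^2)D_{q^{-1}}H_\nu^{(2)}(\lambda x|q^2) - H_\nu^{(2)}(\lambda x|q^2)D_{q^{-1}}J_\nu^{(2)}(\lambda x|q^2)\bigr)$. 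Note that, unlike the third-kind case, equation (\ref{mkrl}) evaluates everything at $x$ rather than $x/q$, which explains why no argument shift appears in the statement.

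To conclude, I would equate these two expressions and divide through by the scalar $\lambda^{\nu+1}q^{-\nu}$. This turns the left-hand prefactor into the bare $C_\nu(q)$ of the statement and multiplies $f(x)$ on the right by $q^\nu\lambda^{-1-\nu}$; writing $f(x)$ out explicitly produces exactly the coefficient $\frac{\lambda^{-1-\nu}q^\nu x}{(-x^2\lambda^2(1-q)^2;q^2)_\infty}$. I do not expect any genuine obstacle. The only points requiring care are recognizing that the second-kind objects force the $D_q$-type formula (\ref{mkrl}) rather than the $D_{q^{-1}}$-type formula (\ref{mkl}) used for the third-kind functions, and verifying that the $p,r$ coefficients of the homogeneous Bessel and inhomogeneous Struve equations genuinely coincide, so that the chosen $h$ really does reduce the integrand to $f\,g\,y$.
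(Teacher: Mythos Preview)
Your proposal is correct and follows essentially the same route as the paper: set $h(x)=H_\nu^{(2)}(\lambda x\,|\,q^2)$, use the inhomogeneous Struve equation (\ref{mnnv}) to collapse the operator inside (\ref{mkrl}) to the forcing term, and then divide through by $\lambda^{\nu+1}q^{-\nu}$. You are in fact slightly more careful than the paper, which cites (\ref{fht}) for $f$ where (\ref{frht5}) is the correct reference, exactly as you observe.
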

\begin{proof}
Let $h(x) = H_\nu^{(2)}(\lambda x|q^2)$, then
  \begin{align}\label{jmjmslsl}
\frac{1}{q} D_{q^{-1}}D_q h(x) + \left(\dfrac{1}{x}-q \lambda^2 x (1-q) \right)D_{q} h(x) +\left(q  \lambda ^{2}  - \frac{q^{1-\nu} [\nu]^2_q }{x^2} \right)h(x)=\dfrac{\lambda^{\nu+1} C_\nu(q) }{q^{\nu}}x^{\nu-1}.
\end{align}
The function $f(x) = \dfrac{x}{(- \lambda^2(1-q)^2 x^2 ;q^2)_\infty}$ is a solution of Equation (${\ref{fht}}$).
Substituting with (${\ref{jmjmslsl}}$) into Equation (${\ref{mkrl}}$), we get the desired result.
\end{proof}
\vskip0.5cm
\begin{theorem}
 Let $\lambda$ and $\nu$ be complex numbers, $ \Re  (\nu) > \frac{-1}{2}$ and $C_\nu(q)$ is the constant  defined  in Theorem {$\ref{theorem4}$}. Then
  \begin{align*}
 &C_\nu(q)\int x^{\nu} (-x^2 \lambda^2(1-q)^2;q^2)_\infty J_\nu^{(1)}( \lambda x |q^2) d_qx  =\nonumber \\& \frac{q^{\nu} x }{ \lambda^{{1+\nu} }}   (-\frac{\lambda^2}{q^2} (1-q)^2 x^2 ;q^2)_\infty \left(  J_\nu^{(1)}( \frac{\lambda x}{q} |q^2) D_{q^{-1}}H_\nu^{(1)}(\lambda x|q^2)- H_\nu^{(1)}(\frac{\lambda x}{q}|q^2) D_{q^{-1}}  J_\nu^{(1)}( \lambda x |q^2)\right).
\end{align*}
\end{theorem}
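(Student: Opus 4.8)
The plan is to reproduce, for the first Jackson $q$-Bessel function, the exact template used in Theorems \ref{gbhvsffsfs} and \ref{jnhbgfertyui}: exploit the fact that the Bessel--Struve function $H_\nu^{(1)}(\lambda x|q^2)$ and $J_\nu^{(1)}(\lambda x|q^2)$ obey one and the same second-order $q$-difference operator, the former inhomogeneously and the latter homogeneously. First I would set $h(x)=H_\nu^{(1)}(\lambda x|q^2)$ and record the $q$-difference equation it satisfies, which (see \cite{struve}) has the form
\begin{align*}
\frac{1}{q}D_{q^{-1}}D_q h(x)+p(x)D_{q^{-1}}h(x)+r(x)h(x)=\frac{\lambda^{\nu+1}C_\nu(q)}{q^{\nu+1}}x^{\nu-1},
\end{align*}
where $p(x)$ and $r(x)$ are precisely the coefficients of the homogeneous equation (\ref{mxlk}) solved by $y(x)=J_\nu^{(1)}(\lambda x|q^2)$; the $x/q$-shifted arguments on the right-hand side of the claim signal that it is the $D_{q^{-1}}$-family identity (\ref{mkl}), and hence Theorem \ref{bvc}, that must be invoked rather than (\ref{mkrl}).

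Next I would identify $f(x)=x\,(-x^2\lambda^2(1-q)^2;q^2)_\infty$ as the solution of the associated first-order equation (\ref{fht}); this is the same weight attached to the $J_\nu^{(1)}$-equation in Theorem \ref{thmjdf}. With $y(x)=J_\nu^{(1)}(\lambda x|q^2)$ a genuine solution of (\ref{mxlk}) and $h(x)=H_\nu^{(1)}(\lambda x|q^2)$ an arbitrary function, all hypotheses of Theorem \ref{bvc} are satisfied, so I may substitute directly into the Lagrangian identity (\ref{mkl}). The decisive step is that inside (\ref{mkl}) the bracket $\tfrac1q D_{q^{-1}}D_q h+p\,D_{q^{-1}}h+r\,h$ is exactly the left-hand side of the inhomogeneous equation for $h$, so it collapses to the forcing term and the left-hand side of (\ref{mkl}) reduces to
\begin{align*}
\frac{\lambda^{\nu+1}C_\nu(q)}{q^{\nu+1}}\int x\,(-x^2\lambda^2(1-q)^2;q^2)_\infty\,x^{\nu-1}\,J_\nu^{(1)}(\lambda x|q^2)\,d_qx,
\end{align*}
which is $\tfrac{\lambda^{\nu+1}}{q^{\nu+1}}$ times the target quantity $C_\nu(q)\int x^{\nu}(-x^2\lambda^2(1-q)^2;q^2)_\infty J_\nu^{(1)}(\lambda x|q^2)\,d_qx$.

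The right-hand side of (\ref{mkl}) is the $q$-Wronskian term $f(x/q)\big(y(x/q)D_{q^{-1}}h(x)-h(x/q)D_{q^{-1}}y(x)\big)$, in which $f(x/q)=\tfrac{x}{q}(-\tfrac{\lambda^2}{q^2}(1-q)^2x^2;q^2)_\infty$. Multiplying the resulting equation through by $q^{\nu+1}/\lambda^{\nu+1}$ then converts the prefactor $\tfrac{1}{q}$ into $\tfrac{q^{\nu}}{\lambda^{1+\nu}}$ and yields exactly the stated combination of $J_\nu^{(1)}(\lambda x/q|q^2)$ and $H_\nu^{(1)}(\lambda x/q|q^2)$. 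I expect the only genuine obstacle to be bookkeeping rather than a new idea: one must pin down from \cite{struve} the precise power of $q$ and the constant in the forcing term $g(x)$ for $H_\nu^{(1)}$ so that the $q^{\nu+1}$ in its denominator cancels correctly against $f(x/q)=\tfrac{x}{q}(\cdots)$ to leave $q^{\nu}x/\lambda^{1+\nu}$, and one must check that the uniform argument scaling $\lambda x\mapsto \lambda x/q$ induced by the $f(x/q)$, $y(x/q)$, $h(x/q)$ evaluations is consistent across the $q$-shifted factorial weights. Once the $H_\nu^{(1)}$-equation is fixed, the remaining manipulation is a single substitution requiring no further $q$-Bessel identities, exactly mirroring the proofs of Theorems \ref{gbhvsffsfs} and \ref{jnhbgfertyui}.
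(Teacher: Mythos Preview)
Your proposal is correct and follows precisely the approach indicated in the paper, which omits the proof as being similar to Theorems \ref{gbhvsffsfs} and \ref{jnhbgfertyui}: take $y(x)=J_\nu^{(1)}(\lambda x|q^2)$ as a solution of the homogeneous equation (\ref{mxlk}), set $h(x)=H_\nu^{(1)}(\lambda x|q^2)$ so that the bracket in (\ref{mkl}) collapses to the forcing term $\lambda^{\nu+1}C_\nu(q)q^{-\nu-1}x^{\nu-1}$, use $f(x)=x\,(-x^2\lambda^2(1-q)^2;q^2)_\infty$ from Theorem \ref{thmjdf}, and then multiply through by $q^{\nu+1}/\lambda^{\nu+1}$. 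Your bookkeeping of the $q$-powers and of $f(x/q)$ is consistent with the stated result.
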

\begin{proof}
  The proof is similar to the proof of Theorem {$\ref{gbhvsffsfs}$} and Theorem {$\ref{jnhbgfertyui}$}, and is omitted.
\end{proof}

\noindent {\bf Acknowledgements} \\
Authors are thankful to the learned referees for their valuable comments which improved the presentation of the paper.
\\
\\
\noindent {\bf Funding}\\
Not applicable
\\
\\
\noindent {\bf Availability of data and materials}\\
The data and material in this paper are original.
\\
\\
\noindent {\bf Competing interests} \\
The authors declare that they have no competing interests.
\\
\\
\noindent{\bf Author's contributions}\\
GH, ZM, and KO together studied and prepared the manuscript. ZM and KO analyzed all the results and made necessary
improvements. GH is the major contributor in writing the paper. All authors read and approved the final manuscript.

\end{document}